\theoremstyle{plain}
\newtheorem{theorem}{Theorem}[section]
\newtheorem{lemma}[theorem]{Lemma}
\newtheorem{proposition}[theorem]{Proposition}
\theoremstyle{definition}
\newtheorem{definition}[theorem]{Definition}
\newtheorem{remark}[theorem]{Remark}
\newtheorem{example}[theorem]{Example}
\numberwithin{equation}{section}
\newcommand\N{\mathbb{N}}
\newcommand\Z{\mathbb{Z}}
\newcommand\R{\mathbb{R}}
\newcommand\C{\mathbb{C}}
\renewcommand\S{\mathcal{S}}
\newcommand\M{\mathfrak{M}}
\newcommand\NN{\mathfrak{N}}
\newcommand\V{\mathcal{V}}
\newcommand\W{\mathcal{W}}
\DeclareMathOperator\supp{supp}
\DeclareMathOperator\loc{loc}
\DeclareMathOperator\condL{L}
\DeclareMathOperator\condwI{wI}
\DeclareMathOperator\condI{I}
\DeclareMathOperator\condwM{wM}
\DeclareMathOperator\condM{M}
\newcommand\jb[1]{\langle#1\rangle}
\begin{document}

\title[On the inclusion relations between Gelfand-Shilov spaces]{On the inclusion relations between Gelfand-Shilov spaces}

\author[A. Debrouwere]{Andreas Debrouwere}
\thanks{}
\address{Department of Mathematics and Data Science\\Vrije Universiteit Brussel\\ Pleinlaan 2\\1050 Brussels\\Belgium}
\email{andreas.debrouwere@vub.be}

\author[L. Neyt]{Lenny Neyt}
\address{University of Vienna\\ Faculty of Mathematics\\ Oskar-Morgenstern-Platz 1 \\ 1090 Wien\\ Austria}
\thanks{L. Neyt gratefully acknowledges support by the Alexander von Humboldt Foundation, the Research Foundation -- Flanders through the postdoctoral grant 12ZG921N, and the Austrian Science Fund (FWF) through the project 10.55776/ESP8128624}
\email{lenny.neyt@univie.ac.at}

\author[J. Vindas]{Jasson Vindas}

\address{Department of Mathematics: Analysis, Logic and Discrete Mathematics\\ Ghent University\\ Krijgslaan 281\\ 9000 Gent\\ Belgium}
\email{jasson.vindas@UGent.be}
\thanks{J. Vindas was supported by the Research Foundation -- Flanders through the grant G067621N and Ghent University through the grant bof/baf/4y/2024/01/155}

\subjclass[2020]{46E10, 26E10}
\keywords{Gelfand-Shilov spaces; Beurling-Bj\"orck spaces; inclusion relations; ultradifferentiable functions; weight sequence systems; weight function systems}

\begin{abstract}
We study inclusion relations between Gelfand-Shilov type spaces defined via a weight (multi-)sequence system, a weight function system, and a translation-invariant Banach function space. 
We characterize when such spaces are included into one another in terms of growth relations for the defining weight sequence  and weight function systems.
Our general framework allows for a unified treatment of the Gelfand-Shilov spaces $\mathcal{S}^{[M]}_{[A]}$  (defined via weight sequences $M$ and $A$) and the Beurling-Bj\"orck spaces $\mathcal{S}^{[\omega]}_{[\eta]}$ (defined via weight functions $\omega$ and $\eta$). 
\end{abstract}

\maketitle

\section{Introduction}

The problem of characterizing inclusion relations between ultradifferentiable classes goes back to a question of Carleman \cite{Carleman} and was, among others, thoroughly studied by Mandelbrojt (see Chapitre VI of his thesis \cite{Mandelbrojt} and the references therein). We refer to \cite{Esser, JSS, N-S, R-S-CompUltradiffClass} for recent works related to this topic.

The goal of this article is to obtain characterizations of inclusion relations between Gelfand-Shilov type spaces \cite{G-S} (= weighted spaces of ultradifferentiable functions defined on the whole of $\R^n$). These 
spaces, also known as spaces of type $\mathcal{S}$, have been intensively studied over the past few years, see e.g.\  \cite{B-J-O-S2,  C-G-P-R, C-T, D-N-V-NuclGSSpKernThm, P-P-V}. The study of inclusion relations in this setting was recently initiated by Boiti et al. \cite{BJOS-inc,BJOS-lc}.

 We shall work here with a novel broad class of Gelfand-Shilov spaces. Namely, our spaces are defined through a multi-indexed weight sequence system \cite{D-N-V-NuclGSSpKernThm,  R-S-CompUltradiffClass} (also sometimes called a weight matrix), a weight function system \cite{D-N-V-NuclGSSpKernThm}, and a translation-invariant Banach function space (cf.\ \cite{F-G-BanachSpIntGroupRepAtomicDecompI}) (generalizing the Lebesgue spaces $L^p(\R^n)$, $p \in [1, \infty]$).
This general framework leads to a unified treatment of Gelfand-Shilov spaces defined via weight sequences \cite{Komatsu} or weight functions \cite{B-M-T-UltradiffFuncFourierAnal}, and via different $L^p$-norms.
 Moreover, as we consider multi-indexed weight sequence systems, our results  cover the anisotropic case as well.
 
The main difference between \cite{BJOS-inc,BJOS-lc}  and our work is that in \cite{BJOS-inc, BJOS-lc} only Fourier invariant Gelfand-Shilov spaces are considered for which the defining regularity and decay conditions are quantified in terms of a single weight sequence system,
whereas we will consider spaces that are not necessarily Fourier-invariant for which the defining regularity and decay conditions are quantified separately in terms of a weight sequence system and a weight function system, respectively. We refer to Remark \ref{r:ComparisonBJOS} for a more detailed comparison between our spaces and the ones considered in  \cite{BJOS-inc, BJOS-lc}. Furthermore, our proof methods are completely different from the ones used in \cite{BJOS-inc, BJOS-lc}.

We now state two important samples of our results. Firstly, we consider Gelfand-Shilov spaces $\mathcal{S}^{(M)}_{(A),p}$ (Beurling case) and $\mathcal{S}^{\{M\}}_{\{A\},p}$ (Roumieu case), $p   \in [1,\infty]$, defined via single isotropic weight sequences $M = (M_q)_{q \in \N}$ and $A = (A_q)_{q \in \N}$, and the $L^p$-norm. We refer to Sections \ref{s:prel} and \ref{sec:MainResults} for the precise definition of these spaces. We will use  $\mathcal{S}^{[M]}_{[A],p}$ as a common notation for $\mathcal{S}^{(M)}_{(A),p}$ and $\mathcal{S}^{\{M\}}_{\{A\},p}$; a similar convention will be used for other spaces and notations. 
	\begin{theorem}
		\label{t:InclusionCharClassicalGS}
		Let $p \in [1, \infty]$. Let $M, N, A, B$ be isotropic weight sequences. Suppose that $M$ and $A$ are log-convex and $\mathcal{S}^{[M]}_{[A],p} \neq \{0\}$. The following statements are equivalent:
			\begin{itemize}
				\item[(i)] $M \preceq N$ and $A \preceq B$, i.e., there are $C,H >0$ such that
				$$
				M_q \leq CH^q N_q, \quad \mbox{and} \quad A_q \leq CH^q B_q, \qquad q \in \N.
				$$
				\item[(ii)] $\S^{[M]}_{[A], p} \subseteq \S^{[N]}_{[B], p}$ as sets.
				\item[(iii)] $\S^{[M]}_{[A], p} \subseteq \S^{[N]}_{[B], p}$ continuously.
			\end{itemize}
	\end{theorem}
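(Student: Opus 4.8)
My plan is to prove the two genuinely analytic implications (i) $\Rightarrow$ (iii) and (iii) $\Rightarrow$ (i), and to dispatch the rest cheaply. The implication (iii) $\Rightarrow$ (ii) is immediate. For (ii) $\Rightarrow$ (iii) I would invoke the closed graph theorem: the Beurling spaces $\S^{(M)}_{(A),p}$ are Fréchet spaces and the Roumieu spaces $\S^{\{M\}}_{\{A\},p}$ are, by their standard inductive-limit structure, webbed and ultrabornological, so the inclusion map --- which has closed graph because convergence in any of these spaces forces pointwise convergence of the functions together with all their derivatives --- is automatically continuous (De Wilde's theorem in the Roumieu case). Together with (i) $\Leftrightarrow$ (iii) this yields the full equivalence.

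For (i) $\Rightarrow$ (iii) I would pass to the associated weight functions $\omega_M,\omega_N,\omega_A,\omega_B$. Since the sequences are log-convex, the relation $M \preceq N$ is equivalent to $\omega_N(t) \le \omega_M(Ht) + \log C$ for suitable $C,H>0$, and likewise $A \preceq B$ is equivalent to $\omega_B(t) \le \omega_A(Ht) + \log C$; this is the standard biduality between log-convex sequences and their associated functions. Feeding these two inequalities into the seminorm description of the spaces --- in which $M$ controls the derivatives and $e^{\omega_A}$ governs the decay --- lets me dominate every defining seminorm of $\S^{[N]}_{[B],p}$ by a single defining seminorm of $\S^{[M]}_{[A],p}$. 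In the Beurling case the geometric constant $H$ is absorbed by the universal quantifier over $h>0$, and in the Roumieu case by the existential quantifier over $h>0$. These seminorm estimates give at once the set inclusion and its continuity.

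The substance is in (iii) $\Rightarrow$ (i), which I would establish by reading the two growth relations off the continuity estimate separately; here the hypothesis $\S^{[M]}_{[A],p} \ne \{0\}$ is used to fix a nonzero seed $\varphi$. The decisive point is that the superscript and subscript conditions decouple along the two basic symmetries of the space. For $M \preceq N$ I would use modulations $\varphi_\xi := e^{i\xi\cdot x}\varphi$: these leave the decay profile (governed by $A$) untouched and shift all the frequency content to scale $|\xi|$. Using the log-convexity of $M$, equivalently the superadditivity $M_jM_k \le M_{j+k}$, one checks that $\varphi_\xi$ remains in $\S^{[M]}_{[A],p}$ and that its source seminorms grow like $e^{\omega_M(c|\xi|)}$ while its target seminorms grow like $e^{\omega_N(c'|\xi|)}$ as $|\xi| \to \infty$. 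The continuity estimate $q_{\mathrm{target}}(\varphi_\xi) \le C\,p_{\mathrm{source}}(\varphi_\xi)$ then forces $\omega_N(c'|\xi|) \le \omega_M(c|\xi|) + C'$ for all $\xi$, which is precisely the associated-function form of $M \preceq N$. Symmetrically, for $A \preceq B$ I would use translations $\tau_{x_0}\varphi := \varphi(\cdot - x_0)$, which preserve the derivative structure (governed by $M$) while the decay weight $e^{\omega_A}$ realizes $\omega_A$-growth in $|x_0|$; here membership is preserved by the elementary subadditivity $\omega_A(s+t) \le \omega_A(2s) + \omega_A(2t)$, and the same continuity estimate yields $\omega_B(c'|x_0|) \le \omega_A(c|x_0|) + C'$, i.e.\ $A \preceq B$.

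I expect the two-sided seminorm asymptotics for the modulates and translates to be the main obstacle. On the upper-bound side the Leibniz expansion must be summed using the superadditivity of $M$ rather than the naive monotonicity $M_{|\gamma|} \le M_{|\alpha|}$, which is too lossy to even witness membership of $\varphi_\xi$ for small parameters; and the matching lower bounds require isolating a portion of $\varphi$ that is bounded away from zero and exploiting the monotonicity of $\omega_M$ (respectively $\omega_A$) to pin down the exact exponential order $e^{\omega_M(c|\xi|)}$ (respectively $e^{\omega_A(c|x_0|)}$). A further reason for care --- and why I would argue directly on the derivative side rather than through a frequency-space reduction --- is that the spaces are not assumed Fourier invariant, and for $p \notin \{1,2,\infty\}$ there is no clean Plancherel or Hausdorff--Young dictionary to transport the modulation estimates to the Fourier side.
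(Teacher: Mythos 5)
Your soft implications are fine ((iii)$\Rightarrow$(ii) trivially, (ii)$\Rightarrow$(iii) by De Wilde's closed graph theorem applied to the Fr\'echet, resp.\ webbed ultrabornological $(LB)$, structures, and (i)$\Rightarrow$(iii) by direct seminorm estimates), and your route to (iii)$\Rightarrow$(i) is genuinely different from the paper's: the paper never tests on modulations or translations of a seed function directly, but instead transfers the \emph{set} inclusion to discretized objects --- a weighted sequence space $E_{d,[\W]}$ via a sampling/synthesis pair built from a window with $\psi(j)=\delta_{j,0}$ (for the decay weights), and a space of $\Z^n$-periodic functions via periodization against a partition of unity $\sum_j T_j\psi\equiv 1$ (for the derivative weights) --- and only then applies the closed graph theorem and tests on unit sequences and on the exponentials $e^{2\pi i kx}$, for which $\partial^{\alpha}e^{2\pi i kx}=(2\pi i k)^{\alpha}e^{2\pi i kx}$ exactly, so that no error terms ever appear. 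Your translation argument for $A\preceq B$ is sound precisely for the analogous reason: translation commutes with differentiation and produces no Leibniz corrections, and the subadditivity of $\omega_A$ together with the representation \eqref{eq:WeightSeqFromWeightFunc} for the log-convex $B$ closes that half.

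The modulation argument for $M\preceq N$, however, has a genuine gap at the lower bound, and it is not one that ``isolating a portion of $\varphi$ bounded away from zero'' can repair. Write $\partial^{\alpha}(e^{i\xi x}\varphi)=(i\xi)^{\alpha}e^{i\xi x}\varphi+\sum_{0\neq\gamma\leq\alpha}\binom{\alpha}{\gamma}(i\xi)^{\alpha-\gamma}e^{i\xi x}\varphi^{(\gamma)}$. To extract $e^{\omega_N(c|\xi|)}$ from the target seminorm you must take $|\alpha|$ near the index $q$ maximizing $|\xi|^{q}/N_q$, and at that scale the correction terms need not be dominated by the leading term: already for $M_q=N_q=q!$ the maximizer is $q\sim|\xi|$, so the $\gamma=e_1$ term $q\,|\xi|^{q-1}\varphi'$ is of the same order as $|\xi|^{q}\varphi$ and the triangle inequality gives nothing; in the Roumieu case one cannot shrink the Leibniz sum by taking the parameter in the sequence system small either, since the seed may live only in a large step of the inductive limit. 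The clean fixes are exactly of the kind you ruled out: either pair $\partial^{\alpha}\varphi_{\xi}$ against $e^{-i\xi x}$ over all of $\R^n$, which gives $\bigl|\int_{\R^n}\partial^{\alpha}\varphi_{\xi}(x)e^{-i\xi x}\,dx\bigr|=|\xi^{\alpha}|\,\bigl|\int\varphi\bigr|$ with \emph{no} error term and requires only $\|\widehat{f}\|_{L^{\infty}}\leq\|f\|_{L^{1}}$ plus the super-polynomial growth of $e^{\omega_B}$ (so it is available for every $p$, contrary to your worry about the absence of a Hausdorff--Young dictionary), or pass to periodic functions and pure exponentials as the paper does. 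As written, the $M\preceq N$ half of (iii)$\Rightarrow$(i) does not go through.
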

Next, we consider Beurling-Bj\"orck spaces $\mathcal{S}^{[\omega]}_{[\eta],p}$ \cite{bjorck66} where $\omega$ is a Braun-Meise-Taylor weight function \cite{B-M-T-UltradiffFuncFourierAnal}, $\eta: [0, \infty) \to [0, \infty)$ is a non-decreasing continuous function, 
  and $p \in [1, \infty]$. Again, we refer to Sections \ref{s:prel} and \ref{sec:MainResults} for the precise definition of these spaces. 	

	\begin{theorem}
		\label{t:InclusionCharBMT}
		Let $p \in  [1, \infty]$. Let $\omega, \sigma$ be Braun-Meise-Taylor weight functions and let  $\eta, \rho: [0, \infty) \to [0, \infty)$ be non-decreasing continuous functions such that $\rho$ is unbounded. 
		Suppose that $\mathcal{S}^{[\omega]}_{[\eta],p} \neq \{0\}$.  The following statements are equivalent:
			\begin{itemize}
				\item[(i)] $\sigma(t) = O(\omega(t))$ and $\rho(t) = O(\eta(t))$.
				\item[(ii)] $\S^{[\omega]}_{[\eta], p} \subseteq \S^{[\sigma]}_{[\rho], p}$ as sets.
				\item[(iii)] $\S^{[\omega]}_{[\eta], p} \subseteq \S^{[\sigma]}_{[\rho], p}$ continuously.
			\end{itemize}
	\end{theorem}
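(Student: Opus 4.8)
The plan is to deduce Theorem~\ref{t:InclusionCharBMT} from the general inclusion theorem established for the framework of Section~\ref{sec:MainResults}, by realizing the Beurling--Bj\"orck space $\S^{[\omega]}_{[\eta],p}$ as an instance of the abstract Gelfand--Shilov space built from a weight function system and a translation-invariant Banach function space: here $E = L^p(\R^n)$ is the Banach function space, while the defining weight function systems are the ones canonically generated by the Braun--Meise--Taylor weight $\omega$ and the non-decreasing weight $\eta$ (and likewise by $\sigma,\rho$). Once this identification is in place, the three-way equivalence is exactly the content of the general theorem, and the only remaining task is to translate the abstract growth relation between the weight function systems into the concrete conditions $\sigma(t) = O(\omega(t))$ and $\rho(t) = O(\eta(t))$. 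The implication (iii)$\Rightarrow$(ii) is trivial, and (i)$\Rightarrow$(iii) follows from direct seminorm estimates: if $\sigma \leq C\omega + C$ and $\rho \leq C\eta + C$, then each weighted bound defining $\S^{[\omega]}_{[\eta],p}$ dominates the corresponding one for $\S^{[\sigma]}_{[\rho],p}$ after absorbing constants and, in the Roumieu case, rescaling the weight parameter; this yields both the set inclusion and its continuity.

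The crux is (ii)$\Rightarrow$(i). First I would upgrade the mere inclusion of sets to a continuous inclusion by a closed graph argument: both spaces are webbed (a Fr\'echet space in the Beurling case, an $(LB)$-space in the Roumieu case), and the inclusion map has closed graph because convergence in either topology forces pointwise convergence, so the De Wilde closed graph theorem applies. From the resulting continuity I would extract the two growth relations by testing against the orbit of a fixed $0 \neq \varphi \in \S^{[\omega]}_{[\eta],p}$ under translations and modulations. The key structural observation is that modulation $\varphi \mapsto e^{i\xi_0 \cdot}\varphi$ leaves $|\varphi|$ unchanged while shifting $\widehat{\varphi}$ by $\xi_0$, so it probes only the Fourier-side weights $\omega,\sigma$; dually, translation $\varphi \mapsto \varphi(\cdot - x_0)$ leaves $|\widehat{\varphi}|$ unchanged and probes only the spatial weights $\eta,\rho$. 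For the Fourier side this is clean: inserting $e^{i\xi_0 \cdot}\varphi$ into the continuity estimate, the target seminorm is bounded below by $c\,e^{\mu\sigma(|\xi_0|)/L}$ (using a point where $\widehat{\varphi}$ stays away from $0$ and the monotonicity of $\sigma$), and the source seminorm is bounded above by $C\,e^{\lambda L \omega(|\xi_0|)}$ (using the doubling/subadditivity of $\omega$); taking logarithms and letting $|\xi_0| \to \infty$ gives $\sigma(t) = O(\omega(t))$.

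The step I expect to be the main obstacle is the spatial relation $\rho(t) = O(\eta(t))$. Since $\eta$ and $\rho$ are assumed only non-decreasing and continuous, they carry no doubling or subadditivity, so the clean passage from $\eta(|x + x_0|)$ to $\eta(|x_0|)$ that was available on the Fourier side is no longer at hand, and translating a single test function yields only the weaker $\rho(t) \leq C\eta(t + \delta) + C$. To recover the sharp relation I would let the test function concentrate, balancing the shrinking support (which tightens the weight comparison to the exact points $t \pm \delta$ and lets $\delta \to 0$ by continuity) against the unavoidable growth of its Fourier-side seminorms; keeping this balance uniform is the delicate point, and it is precisely where the regularity encoded in the weight function system generated by $\eta$ -- rather than any pointwise regularity of $\eta$ itself -- does the work in the general theorem. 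The remaining bookkeeping, namely matching the universal quantifier of the Beurling topology with the existential one of the Roumieu topology, is routine once both growth relations are established.
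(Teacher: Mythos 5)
Your opening plan --- specialize the general Theorem \ref{t:InclusionChar} with $E=L^p$, $\M=\M_\omega$, $\NN=\M_\sigma$, $\W=\W_\eta$, $\V=\W_\rho$, and translate the abstract relations $\M_\omega[\subseteq]\M_\sigma$ and $\W_\eta[\subseteq]\W_\rho$ into $\sigma(t)=O(\omega(t))$ and $\rho(t)=O(\eta(t))$ --- is exactly the paper's proof; the remaining content there is checking the hypotheses $[\condL]$, $[\condI]$, $[\condwI]$, $[\condM]$, $[\condwM]$ for these particular systems. But the argument you then actually sketch for (ii)$\Rightarrow$(i) is a different one, and it has two genuine gaps.

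First, your modulation argument for $\sigma(t)=O(\omega(t))$ is phrased as if $\S^{[\sigma]}_{[\rho],p}$ were defined by decay of $\widehat{\varphi}$ against $e^{\sigma}$. In this paper it is not: the seminorms are $\sup_{\alpha}\|f^{(\alpha)}e^{\rho(|\cdot|)/\lambda}\|_{L^p}/\exp(\tfrac{1}{\mu}\phi^{*}(\mu|\alpha|))$ (with $\phi^*$ the Young conjugate attached to $\sigma$) and contain no Fourier transform. A lower bound of order $e^{c\sigma(|\xi_0|)}$ for the target seminorm of $e^{i\xi_0\cdot}\varphi$ must therefore be extracted from the derivatives, and the Leibniz expansion of $(e^{i\xi_0\cdot}\varphi)^{(\alpha)}$ produces lower-order terms that can cancel the leading term $(i\xi_0)^{\alpha}\varphi$; you give no mechanism for controlling this (and routing through $\|\widehat{g}\|_{L^\infty}\le\|g\|_{L^1}$ breaks down for $p>1$). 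The paper sidesteps this entirely by periodizing (Lemmas \ref{l:ContInclusionTildeSp}--\ref{l:SpecificWindow2}), reducing to $E^{[\M]}_{\per}\subseteq E^{[\NN]}_{\per}$ and testing on the pure exponentials $f_k=e^{2\pi i k\cdot}$, for which $\|f_k\|_{E^{N^{\lambda}}_{\per}}=\|1_{[0,1]^n}\|_E\exp\omega_{N^{\lambda}}(2\pi k)$ holds exactly, with no interference. Second, and more seriously, for $\rho(t)=O(\eta(t))$ you concede that translating a fixed test function only yields $\rho(t)\le C\eta(t+\delta)+C$, and your proposed repair by concentration cannot work: elements of $\S^{[\omega]}_{[\eta],p}$ cannot concentrate at arbitrarily small scales with uniformly bounded seminorms, the continuity of the inclusion gives no uniformity over such a shrinking family, and for a merely non-decreasing $\eta$ the bound $\rho(t)\le C\eta(t+\delta)+C$ with fixed $\delta>0$ genuinely fails to imply $\rho(t)=O(\eta(t))$ (take $\eta(t)=e^{e^{t}}$). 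The paper's mechanism is different: it discretizes with a window $\psi$ satisfying $\psi(j)=\delta_{j,0}$ and the maps $S$, $R_\psi$, reduces to $E_{d,[\W]}\subseteq E_{d,[\V]}$, tests on unit coordinate sequences to compare the weights only on the lattice $\Z^n$, and then invokes the hypothesis $[\condwM]$ on \emph{both} weight function systems to pass from $\Z^n$ to $\R^n$. It is this regularity hypothesis, not concentration, that closes the gap you identify, and verifying it for $\W_\eta$ and $\W_\rho$ is where the substance of deducing Theorem \ref{t:InclusionCharBMT} from the general framework lies.
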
	
This article is organized as follows. In the preliminary Section \ref{s:prel} we introduce the necessary notions to define the Gelfand-Shilov type spaces that we will be concerned with in this article. Our main results are stated in Section \ref{sec:MainResults}. Finally, the proofs of these results are given in Section \ref{s:proofs}.

\section{Preliminaries}\label{s:prel}

In this preliminary section, we introduce and discuss translation-invariant Banach function spaces, weight function systems, and weight sequence systems. These notions will be used in the next section to define the Gelfand-Shilov type spaces that we shall work with in this article.
We denote translation by $x \in \R^{n}$ as $T_{x} f(t) = f(t - x)$ and reflection about the origin as $\check{f}(t) = f(-t)$. We write $1_A$ for the indicator function of a set $A \subseteq \R^n$. 
\subsection{Banach function spaces}
The following definition is much inspired by the Banach function spaces used in the coorbit theory of Feichtinger and Gr\"{o}chening \cite{F-G-BanachSpIntGroupRepAtomicDecompI} (cf. \cite[Sections 7 and 8]{D-H-V}).

	\begin{definition}
		A Banach space $E$ is called a \emph{translation-invariant Banach function space (TIBF) of bounded type \footnote{\emph{Bounded type} essentially refers to property (A.2).}} on $\R^n$ if $E$ is non-trivial, the continuous inclusion $E \subseteq  L^{1}_{\loc}(\R^n)$ holds, and $E$ satisfies the following three conditions:
			\begin{itemize}
				\item[(A.1)] $T_{x} E \subseteq E$ for all $x \in \R^{n}$.
				\item[(A.2)] There exists $C_0 > 0$ such that $\|T_{x} f\|_{E} \leq C_0 \|f\|_{E}$ for all $x \in \R^{n}$ and $f \in E$.
				\item[(A.3)] 
				$E * C_{c}(\R^{n}) \subseteq E$. 
			\end{itemize}
		The Banach space $E$ is called \emph{solid} if for all $f \in E$ and $g \in L^{1}_{\loc}(\R^n)$ we have that
			\[ |g(x)| \leq |f(x)| \text{ for almost all } x \in \R^n \quad \Longrightarrow \quad g \in E \text{ and } \|g\|_{E} \leq \|f\|_{E} . \] 
	\end{definition}
	
	\begin{remark}
		Let $E$ be a solid TIBF of bounded type. Then, 
		every element of $L^\infty(\R^n)$ with compact support belongs to $E$ (cf.\ \cite[Lemma 3.9]{F-G-BanachSpIntGroupRepAtomicDecompI}). In particular, $1_{K} \in E$ for every compact $K \subseteq \R^{n}$.
	\end{remark}
	
	\begin{example} (i) The Lebesgue spaces $L^p = L^p(\R^n)$, $p \in [1,\infty]$, are solid TIBF of bounded type on $\R^n$. We define $L^0 = L^0(\R^n)$ as the space consisting of all $f \in L^\infty$ such that for every $\varepsilon >0$ there is a compact $K \subseteq \R^n$ such that $|f(x)| \leq \varepsilon$ for almost all $x \in \R^n \backslash K$. We endow $L^0$ with the subspace topology induced by  $L^\infty$. Then, $L^0$ is a solid TIBF of bounded type on $\R^n$.
		
		\noindent	
		(ii) The mixed-norm Lebesgue spaces $L^{p_1,p_2}(\R^{n_1+n_2}) = L^{p_1}(\R^{n_1}; L^{p_2}(\R^{n_2}))$, $p_1,p_2 \in [1,\infty]$, are solid TIBF of bounded type on $\R^{n_1+n_2}$.
	
	\end{example}
	
Following \cite[Definition 3.4]{F-G-BanachSpIntGroupRepAtomicDecompI}, we associate a Banach sequence space with each solid TIBF of bounded type in the following way.
	
	\begin{definition}
		\label{def:Ed}
		Let $E$ be a solid TIBF of bounded type. We define the space $E_d$ as the space consisting of all $c = (c_{j})_{j \in \Z^{n}} \in \C^{\Z^{n}}$ such that 
			\[ \sum_{j \in \Z^{n}} c_{j} T_{j} 1_{[0, 1]^{n}} \in E \]
		and endow it with the norm $\|c\|_{E_{d}} = \|\sum_{j \in \Z^{n}} |c_{j}| T_{j} 1_{[0, 1]^{n}}\|_{E}$. Then, $E_{d}$ is a Banach space.
	\end{definition}
	
	\begin{remark}
		\label{remark-inclusion}
		Let $E$ be a solid TIBF of bounded type. Then, $\ell^1 \subseteq E_{d} \subseteq \ell^\infty$  continuously (cf.\ \cite[Lemma 3.5(a)]{F-G-BanachSpIntGroupRepAtomicDecompI}).
	\end{remark}
	
	\begin{example} 
		(i) $L^p(\R^n)_d = \ell^p(\Z^n)$, $p \in [1,\infty]$, and, $L^0(\R^n)_d = c_0(\Z^n)$. \\
		(ii) $L^{p_1,p_2}(\R^{n_1+n_2})_d = \ell^{p_1,p_2}(\Z^{n_1+n_2})= \ell^{p_1}(\Z^{n_1}; \ell^{p_2}(\Z^{n_2}))$, $p_1,p_2 \in [1,\infty]$.
	\end{example}

The following two results will be used later on.
	
	\begin{lemma}[cf.\ {\cite[Proposition 5.1]{F-G-BanachSpIntGroupRepAtomicDecompI}}]
		\label{l:ConvEval}
		Let $E$ be a solid TIBF of bounded type. For all $f \in E$ and $\chi \in C_c(\R^n)$ it holds that 
			\[  S_{\chi}(f) = (f * \chi (j))_{j \in \Z^{n}} \in E_d. \]
	\end{lemma}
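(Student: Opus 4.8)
The plan is to reduce the claim to the solidity of $E$ together with condition (A.3). Writing $g = \sum_{j \in \Z^{n}} |f * \chi(j)| \, T_{j} 1_{[0,1]^{n}}$, we must show $g \in E$; by the definition of $E_{d}$ and of its norm, this is exactly the assertion $S_{\chi}(f) \in E_{d}$. Since $g$ is a step function that equals $|f * \chi(j)|$ almost everywhere on the cube $j + [0,1)^{n}$, the idea is to dominate $g$ pointwise almost everywhere by a single convolution $|f| * \Psi$ with a suitable $\Psi \in C_{c}(\R^{n})$, and then to invoke solidity.

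The key construction is the sup-envelope
\[ \Psi(z) = \sup_{u \in [0,1]^{n}} |\chi(z - u)|, \qquad z \in \R^{n}. \]
I would first check that $\Psi \geq 0$ and $\Psi \in C_{c}(\R^{n})$: continuity follows from the uniform continuity of $\chi$ (so that $|\Psi(z) - \Psi(z')|$ is controlled by the modulus of continuity of $\chi$), while $\supp \Psi \subseteq \supp \chi + [0,1]^{n}$ is compact. The crucial pointwise estimate is then obtained as follows: for $x$ in the cube $j + [0,1)^{n}$ we write $j = x - u$ with $u = x - j \in [0,1)^{n}$, whence
\[ g(x) = |f * \chi(j)| \leq \int_{\R^{n}} |f(y)| \, |\chi(x - y - u)| \, dy \leq \int_{\R^{n}} |f(y)| \, \Psi(x - y) \, dy = (|f| * \Psi)(x), \]
using $|\chi((x-y) - u)| \leq \Psi(x-y)$. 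Since $f \in L^{1}_{\loc}(\R^{n})$ and $\Psi \in C_{c}(\R^{n})$, the convolution $|f| * \Psi$ is finite everywhere, so this bound holds for almost every $x \in \R^{n}$.

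To conclude, note that $|f| \in E$ with $\||f|\|_{E} = \|f\|_{E}$ by solidity, and hence $|f| * \Psi \in E$ by (A.3). As $0 \leq g \leq |f| * \Psi$ almost everywhere and $g \in L^{1}_{\loc}(\R^{n})$, solidity yields $g \in E$ together with the bound $\|S_{\chi}(f)\|_{E_{d}} = \|g\|_{E} \leq \||f| * \Psi\|_{E}$. The main obstacle is the construction of $\Psi$ and the verification that the discrete sampling values $f * \chi(j)$ can be absorbed, uniformly over each unit cube, into one continuous convolution; once the envelope $\Psi$ is in place, the remaining steps are routine consequences of solidity and (A.3).
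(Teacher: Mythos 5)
Your proof is correct and follows essentially the same strategy as the paper: dominate the step function $\sum_j |f*\chi(j)|\,T_j 1_{[0,1]^n}$ pointwise by $|f|$ convolved with a compactly supported continuous majorant, then conclude via (A.3) and solidity. The only (cosmetic) difference is the choice of majorant — you use the sup-envelope $\Psi(z)=\sup_{u\in[0,1]^n}|\chi(z-u)|$, whereas the paper takes $\|\chi\|_{L^\infty}\,|\psi|$ for a cutoff $\psi\equiv 1$ on $\supp\chi+[0,1]^n$; both yield the same estimate.
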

	
	\begin{proof}
		Let $f \in E$ and $\chi \in  C_{c}(\R^{n})$ be arbitrary. Since $E \subseteq L^{1}_{\loc}(\R^n)$, $f \ast \chi \in C(\R^n)$. Hence, the point values $f * \chi (j)$, $j \in \Z^{n}$, are well-defined. 
		 Choose  $\psi \in C_{c}(\R^{n})$ such that $\psi \equiv 1$ on $\supp \chi + [0, 1]^{n}$. Then, for all $j \in \Z^n$ and $x \in j + [0,1]^n$ it holds that
			 \[ f \ast \chi(j) = \int_{\R^n} f(t) \psi(x-t) \chi(j-t) dt. \] 
		 Hence, we obtain that for all $x \in \R^n$
			\[ \left| \sum_{j \in \Z^{n}} f * \chi(j) T_{j} 1_{[0, 1]^{n}}(x) \right| \leq \| \chi\|_{L^\infty} (|f| \ast |\psi|) (x). \]
		The result now follows from $E * C_{c}(\R^{n}) \subseteq E$ and the fact that $E$ is solid.
	\end{proof}
	
As customary, we write $\jb{x} = (1 + |x|^{2})^{1/2}$, $x \in \R^n$. We define $C_{\jb{ \, \cdot \,}^{n+1}} = C_{\jb{ \, \cdot \,}^{n+1}}(\R^n)$ as the Banach space consisting of all $f \in C(\R^n)$ such that
	\[ \| f\|_{\jb{ \, \cdot \,}^{n+1}} = \sup_{x\in \R^n} |f(x)| \jb{x}^{n+1} < \infty. \]
	
	\begin{lemma}[cf.\ {\cite[Proposition 5.2]{F-G-BanachSpIntGroupRepAtomicDecompI}}]
		\label{l:EdToE}
		Let $E$ be a solid TIBF of bounded type. 
		The bilinear mapping
			\[ E_{d} \times C_{\jb{ \, \cdot \,}^{n+1}} \to E, \quad (c,\psi) \mapsto R_{\psi}(c) = \sum_{j \in \Z^n} c_{j} T_{j} \psi, \]
		is well-defined and continuous. 
	\end{lemma}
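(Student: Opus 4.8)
The plan is to reduce everything to the single norm estimate
\[
\| R_\psi(c)\|_E \leq C \, \|c\|_{E_d}\, \|\psi\|_{\jb{\cdot}^{n+1}}
\]
for a constant $C = C(E,n)$; since $R_\psi(c)$ is manifestly bilinear in $(c,\psi)$, this bound simultaneously gives well-definedness (the defining series represents an element of $E$) and continuity. The strategy is to dominate $R_\psi(c)$ pointwise by a nonnegative lattice sum $\sum_{k \in \Z^n} d_k\, T_k 1_{[0,1]^n}$ with $d \in E_d$, and then invoke solidity of $E$.

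First I would record pointwise absolute convergence. Since $\psi \in C_{\jb{\cdot}^{n+1}}$ yields $|\psi(x)| \leq \|\psi\|_{\jb{\cdot}^{n+1}} \jb{x}^{-(n+1)}$ and $c \in E_d \subseteq \ell^\infty$ by Remark \ref{remark-inclusion}, for each fixed $x$ one has $\sum_{j} |c_j|\,|\psi(x-j)| \leq \|c\|_{\ell^\infty}\|\psi\|_{\jb{\cdot}^{n+1}} \sum_{j} \jb{x-j}^{-(n+1)} < \infty$, because $\sum_{m \in \Z^n} \jb{m}^{-(n+1)} < \infty$ (here the exponent $n+1 > n$ is exactly what makes this lattice sum summable). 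Thus $R_\psi(c)(x)$ is defined by an absolutely convergent series, and, the convergence being locally uniform, $R_\psi(c) \in C(\R^n) \subseteq L^1_{\loc}(\R^n)$. Next comes the pointwise domination: a routine estimate produces $c_n > 0$ with $\jb{x-j}^{-(n+1)} \leq c_n \jb{k-j}^{-(n+1)}$ whenever $x \in k + [0,1)^n$ and $j \in \Z^n$ (using $\jb{k-j} \leq \sqrt{2n+1}\,\jb{x-j}$ on that cube). Writing $w_m = \jb{m}^{-(n+1)}$ and $d_k = \sum_{j} |c_j|\,\jb{k-j}^{-(n+1)} = (|c| * w)_k$, and partitioning $\R^n = \bigsqcup_{k} (k+[0,1)^n)$, I obtain the almost everywhere bound $|R_\psi(c)(x)| \leq c_n \|\psi\|_{\jb{\cdot}^{n+1}} \sum_{k} d_k\, T_k 1_{[0,1]^n}(x)$.

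The remaining and central point is that $d = |c| * w \in E_d$ with $\|d\|_{E_d} \leq C_0 \|w\|_{\ell^1} \|c\|_{E_d}$, that is, that $E_d$ is a convolution module over $\ell^1(\Z^n)$. For this I would rewrite, using $w_m \geq 0$, $|c_i| \geq 0$, Tonelli, and $T_m T_i 1_{[0,1]^n} = T_{m+i} 1_{[0,1]^n}$,
\[
\sum_{k} d_k\, T_k 1_{[0,1]^n} = \sum_{m} w_m\, T_m F, \qquad F := \sum_{i} |c_i|\, T_i 1_{[0,1]^n} \in E, \quad \|F\|_E = \|c\|_{E_d}.
\]
By the translation bound (A.2) one has $\|T_m F\|_E \leq C_0 \|F\|_E$, so $\sum_{m} w_m \|T_m F\|_E \leq C_0 \|w\|_{\ell^1} \|c\|_{E_d} < \infty$; hence the series on the right converges absolutely in the Banach space $E$ to $\sum_{k} d_k\, T_k 1_{[0,1]^n}$, which shows $d \in E_d$ with the claimed bound. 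Combining this with the domination of the previous paragraph, solidity of $E$ gives $R_\psi(c) \in E$ together with $\|R_\psi(c)\|_E \leq c_n C_0 \|w\|_{\ell^1} \|\psi\|_{\jb{\cdot}^{n+1}} \|c\|_{E_d}$, which is exactly the desired estimate. (Note that I do \emph{not} claim norm convergence of the series $\sum_j c_j T_j \psi$ in $E$, which may fail, e.g.\ for $E = L^\infty$; ``well-defined'' here means that the pointwise series represents an element of $E$.)

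The step I expect to be the main obstacle is the convolution-module identity in the displayed equation: one must legitimately interchange the summations over $j$ (hidden inside $d_k$) and over $k$, reindex through $T_m T_i = T_{m+i}$, and then guarantee $E$-convergence of the resulting series purely from (A.2) together with $w \in \ell^1$. The nonnegativity of $w$ and of $|c_i|$ (so that Tonelli applies and the rearrangement is unconditional), combined with the single uniform translation constant $C_0$ from (A.2), are precisely what make this manipulation rigorous.
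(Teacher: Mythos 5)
Your proof is correct and follows essentially the same route as the paper's: both decompose over the lattice $\Z^n$, dominate everything by translates of $F=\sum_j |c_j| T_j 1_{[0,1]^n}$ weighted by an $\ell^1$ sequence controlled by $\|\psi\|_{\jb{\,\cdot\,}^{n+1}}$, and conclude via (A.2), absolute convergence in the Banach space $E$, and solidity. The only cosmetic difference is that the paper works with the exact identity $R_{\psi}(c)=\sum_k T_k\bigl(\sum_j c_j T_{j-k}\psi\, T_j 1_{[0,1]^n}\bigr)$ (slicing $\psi$ over the unit cubes), whereas you first pass to the pointwise majorant $\sum_k (|c|*\jb{\,\cdot\,}^{-(n+1)})_k\, T_k 1_{[0,1]^n}$ and phrase the key step as $E_d$ being a convolution module over $\ell^1(\Z^n)$.
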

	
	\begin{proof}
	Let $c \in E_d$ and $\psi \in C_{\jb{ \, \cdot \,}^{n+1}}$ be arbitrary. 
	 For all $k \in \Z^n$ it holds that
		$$
		\left |\sum_{j \in \Z^n} c_j T_{j-k} \psi T_j 1_{[0, 1]^{n}} \right| \leq \| \psi T_k 1_{[0, 1]^{n}} \|_{L^\infty} \sum_{j \in \Z^n} |c_j| T_j 1_{[0, 1]^{n}}.
		$$
		As $E$ is solid, we obtain that $\sum_{j \in \Z^n} c_j T_{j-k} \psi T_j 1_{[0, 1]^{n}} \in E$ 
		and
		$$
		 \left \| \sum_{j \in \Z^n} c_j T_{j-k} \psi T_j 1_{[0, 1]^{n}}\right \|_E \leq \| \psi T_k 1_{[0, 1]^{n}} \|_{L^\infty} \| c\|_{E_d}.
		 $$
		 Consequently,
		\begin{align*}
		  \sum_{k \in \Z^n} \left  \| T_k \left ( \sum_{j \in \Z^n} c_j T_{j-k} \psi T_j 1_{[0, 1]^{n}} \right) \right \|_E &\leq C_0  \| c\|_{E_d}   \sum_{k \in \Z^n} \| \psi T_k 1_{[0, 1]^{n}} \|_{L^\infty}  \\
		  &\leq C_0C  \| c\|_{E_d} \| \psi\|_{\jb{ \, \cdot \,}^{n+1}}, 
		\end{align*}
		 where $C_0$ is the constant from condition (A.2) and $C = (2(n+1))^{(n + 1) / 2} \sum_{k \in \Z^{n}} \jb{k}^{-(n + 1)}$. 
		 Since  we have the pointwise equality 
		 $$
		 R_{\psi}(c)
		 = \sum_{k \in \Z^n} T_k \left ( \sum_{j \in \Z^n} c_j T_{j-k} \psi T_j 1_{[0, 1]^{n}} \right),
		 $$
		and $E$ is solid and complete, we may conclude that $R_{\psi}(c) \in E$ and
		$$
		\| R_{\psi}(c)\|_E
		 \leq C_0C  \| c\|_{E_d} \| \psi\|_{\jb{ \, \cdot \,}^{n+1}} .
		$$
	\end{proof}

\subsection{Weight function systems}

By a \emph{weight function}, we mean a real-valued continuous function $w$ on $\R^n$ such that $w(x) \geq 1$ for all $x \in \R^n$. 
Following \cite{D-N-V-NuclGSSpKernThm}, a family $\W = \{ w^{\lambda} \mid \lambda \in \R_{+} \}$  of weight functions is called a \emph{weight function system} if $w^{\lambda}(x) \leq w^{\mu}(x)$ for all $x \in \R^n$ and $\mu \leq \lambda$.
Let $B(0, R) = \{ x \in \R^n \mid |x| < R \}$ for $R >0$.
We consider the following conditions on a weight function system $\W$:
	\begin{itemize}
		\item[$(\condwM)$] $\forall \lambda \in \R_{+} \, \exists \mu \in \R_{+} \, \exists C > 0\,  \forall x \in \R^n, y \in B(0,1) \, : \, w^{\lambda}(x + y) \leq C w^{\mu}(x)$.
		\item[$\{\condwM\}$] $\forall \mu \in \R_{+}  \,\exists \lambda \in \R_{+} \, \exists C > 0\,  \forall x \in \R^n, y \in B(0,1) \, : \, w^{\lambda}(x + y) \leq C w^{\mu}(x)$.
		\item[$(\condM)$] $\forall \lambda \in \R_{+} \, \exists \mu, \nu \in \R_{+} \, \exists C > 0 \, \forall x, y \in \R^n \, : \, w^{\lambda}(x + y) \leq C w^{\mu}(x) w^{\nu}(y)$.
		\item[$\{\condM\}$] $\forall \mu, \nu \in \R_{+} \, \exists \lambda \in \R_{+} \, \exists C > 0 \, \forall x, y \in \R^n \, : \, w^{\lambda}(x + y) \leq C w^{\mu}(x) w^{\nu}(y)$.
	\end{itemize}
Note that $[\condM]$ implies $[\condwM]$. (We recall again that we employ $[ \: ]$ as a common notation for treating both symbols $(\:)$ and $\{\:\}$ simultaneously.)

For two weight function systems $\W$ and $\V$ we write
	\begin{align*}
		\W (\subseteq) \V & \quad \Longleftrightarrow \quad \forall \lambda \in \R_{+} \, \exists \mu \in \R_{+} \, : \, v^{\lambda}(t) = O(w^\mu(t)), \\
		\W \{\subseteq\} \V & \quad \Longleftrightarrow \quad \forall \mu \in \R_{+} \, \exists \lambda \in \R_{+} \, : \, v^{\lambda}(t) = O(w^\mu(t)). 	
	\end{align*}

\subsection{Weight sequence systems}\label{s:wss}

A sequence $M = (M_{\alpha})_{\alpha \in \N^{n}}$ of positive numbers is called a \emph{weight sequence} if $M_0 = 1$ and $\lim_{|\alpha| \to \infty} (M_{\alpha})^{1 / |\alpha|} = \infty$.
We define its \emph{associated function}  as
	\[ \omega_{M}(x) = \sup_{\alpha \in \N^{n}} \log \frac{|x^{\alpha}|}{M_{\alpha}} , \qquad x \in \R^{n} . \]
Note that $\exp \omega_{M}$ is a weight function.

A weight sequence $M$ is said to be \emph{log-convex} if there exists a convex function $F : [0, \infty)^{n} \to \R$ with $F(\alpha) = \log M_{\alpha}$ for all $\alpha \in \N^{n}$.
In \cite[Section 5]{BJOS-lc} it was shown that the log-convex minorant $M^{\text{lc}} =  (M^{\text{lc}}_{\alpha})_{\alpha \in \N^{n}}$ of $M$ is given by
	\begin{equation}
		\label{eq:LogConvexMinorant}
		M^{\text{lc}}_{\alpha} = \sup_{x \in \R^{n}} \frac{|x^{\alpha}|}{\exp \omega_{M}(x)} , \qquad \alpha \in \N^{n} , 
	\end{equation}
i.e., $M^{\text{lc}}$ is the largest log-convex weight sequence such that $M^{\text{lc}}_{\alpha} \leq M_{\alpha}$ for all $\alpha \in \N^n$.
In particular, $M = M^{\text{lc}}$ if and only if $M$ is log-convex.
A weight sequence $M$ is called  \emph{isotropic} if $M = (M_{|\alpha|})_{\alpha\in \mathbb{N}^{n}}$ for a sequence $(M_{q})_{q\in\mathbb{N}}$.
\begin{remark} 
Let $e_j$ be the standard coordinate unit vectors in $\R^n$, $j = 1, . . . , n$. If a weight sequence $M$ is log-convex, then it satisfies
	$$
		M^2_{\alpha+e_j} \leq M_{\alpha} M_{\alpha + 2e_j} , \qquad \alpha \in \N^n, ~ j = 1, \ldots, n. 
	$$
The converse is true if $M$ is isotropic, but 
false for general weight sequences \cite[Example 5.4]{BJOS-lc}.  
\end{remark}

For two weight sequences $M$ and $N$ we define
	\[ M \subseteq N \quad \Longleftrightarrow \quad \exists C > 0 \, \forall \alpha \in \N^{n} \, : \, M_{\alpha} \leq C N_{\alpha} . \]
As in Theorem \ref{t:InclusionCharClassicalGS}, we write
	\[ M \preceq N \quad \Longleftrightarrow \quad \exists C,H > 0 \, \forall \alpha \in \N^{n} \, : \, M_{\alpha} \leq CH^{|\alpha|} N_{\alpha} . \]

A family $\M = \{ M^{\lambda} \mid \lambda \in \R_{+} \}$ of weight sequences is called a \emph{weight sequence system} if $M^{\lambda}_{\alpha} \leq M^{\mu}_{\alpha}$ for all $\alpha \in \N^{n}$ and $\lambda \leq \mu$. 
We call $\M$ log-convex  if each $M^{\lambda}$ is log-convex. We consider the following conditions on $\M$:
	\begin{itemize}
		\item[$(\condL)$] $\forall R > 0 \, \forall \lambda \in \R_{+} \, \exists \mu \in \R_{+} \, \exists C > 0 \, \forall \alpha \in \N^{n} \, : \, R^{|\alpha|} M^{\mu}_{\alpha} \leq C M^{\lambda}_{\alpha}$.
		\item[$\{\condL\}$] $\forall R > 0 \, \forall \mu \in \R_{+} \, \exists \lambda \in \R_{+} \, \exists C > 0 \, \forall \alpha \in \N^{n} \, : \, R^{|\alpha|} M^{\mu}_{\alpha} \leq C M^{\lambda}_{\alpha}$.
		\item[$(\condwI)$] $\forall \lambda \in \R_{+} \, \exists \mu \in \R_{+} \, \exists H > 0 \, \forall R > 0  \, \exists C > 0  \,\forall \alpha,\beta \in \N^{n} \, : \,  \newline M^{\mu}_{\alpha}R^{|\beta|} \leq CH^{|\alpha + \beta|}M^{\lambda}_{\alpha + \beta }$.
		\item[$\{\condwI\}$] $\forall \mu \in \R_{+} \, \exists \lambda \in \R_{+} \, \exists H > 0 \, \forall R > 0  \, \exists C > 0  \, \forall \alpha, \beta \in \N^{n} \, : \, \newline M^{\mu}_{\alpha} R^{|\beta|} \leq CH^{|\alpha + \beta|}M^{\lambda}_{\alpha + \beta }$.	
		\item[$(\condI)$] $\forall \lambda \in \R_{+} \, \exists \mu, \nu \in \R_{+} \, \exists C,H > 0 \, \forall \alpha,\beta \in \N^{n} \, : \,  M^{\mu}_{\alpha}M^{\nu}_{\beta} \leq CH^{|\alpha + \beta|}M^{\lambda}_{\alpha + \beta }$.
		\item[$\{\condI\}$] $\forall \mu, \nu \in \R_{+} \, \exists \lambda \in \R_{+} \, \exists C,H > 0 \, \forall \alpha,\beta \in \N^{n} \, : \,  M^{\mu}_{\alpha}M^{\nu}_{\beta}\leq CH^{|\alpha + \beta|}M^{\lambda}_{\alpha + \beta }$.
		\end{itemize}
Note that $[\condI]$ implies $[\condwI]$. The condition $[\condI]$  was introduced in \cite[Section 6]{BJOS-lc}. Given a single weight sequence $M$, we  define $\M_{M} = \{ (\lambda^{|\alpha|} M_{\alpha})_{\alpha \in \N^{n}} \, | \, \lambda \in \R_{+} \}$. Then, $\M_M$ is log-convex if and only if $M$ is so and $\M_M$ always satisfies $[\condL]$.

\begin{remark}
\label{r:Counterexample}
Every isotropic log-convex weight sequence $M$ satisfies
	\[ M_\alpha M_\beta \leq M_{\alpha+ \beta}, \qquad \alpha, \beta \in \N^d. \]
In particular, every weight sequence system consisting of isotropic log-convex weight sequences satisfies $[\condI]$. However, there exist log-convex weight sequences M that do not satisfy
\begin{equation}
	\label{eq:CounterexamplewI}
	\exists H > 0 \, \forall R > 0 \, \exists C > 0 \, \forall \alpha, \beta \in \N^n \, : \, M_{\alpha} R^{|\beta|} \leq C H^{|\alpha + \beta|} M_{\alpha + \beta} . 
\end{equation}
In particular, there exist log-convex weight sequence systems that do not satisfy $[\condwI]$ (take $\M_{M}$ with $M$ a weight sequence not satisfying \eqref{eq:CounterexamplewI}). In order to construct a weight sequence $M$ violating \eqref{eq:CounterexamplewI}, we consider a simplified version of the weight sequence found in  \cite[Section 6]{BJOS-lc}:  $M =(M_\alpha)_{\alpha \in \N^2}$ with $M_\alpha = e^{\max\{\alpha_1^2, \alpha_2^2\}}$ for $\alpha = (\alpha_1,\alpha_2)\in\N^2$. It is clear that $M$ is log-convex. Suppose that $M$ satisfies \eqref{eq:CounterexamplewI}. By evaluating \eqref{eq:CounterexamplewI} at  $\alpha = (j, 0)$ and $\beta = (0, j)$ for $j \in \N$, we would obtain that
$$
\exists H > 0 \, \forall R > 0 \, \exists C > 0 \, \forall j \in \N \, : \,  e^{j^2} R^j \leq C H^{2 j} e^{j^2}, 
$$
a contradiction.\end{remark}

For two weight sequence systems $\M$ and $\NN$ we write
	\begin{align*}
		\M (\subseteq) \NN & \quad \Longleftrightarrow \quad \forall \lambda \in \R_{+} \, \exists \mu \in \R_{+} \, : \, M^{\mu} \subseteq N^{\lambda} , \\
		\M \{\subseteq\} \NN & \quad \Longleftrightarrow \quad \forall \mu \in \R_{+} \, \exists \lambda \in \R_{+} \, : \, M^{\mu} \subseteq N^{\lambda} .
	\end{align*}
For two weight sequences $M$ and $N$ it holds that $\M_{M} [\subseteq] \M_{N}$ if and only if $M \preceq N$.

We associate the following weight function system to a  weight sequence system $\M$
	\[ \W_{\M} = \{ \exp \omega_{M^{\lambda}} \mid \lambda \in \R_{+} \} . \]
Given a weight sequence $M$, we  define $\W_{M} = \W_{\M_{M}} = \{ e^{\omega_M\left( \frac{\cdot}{\lambda} \right)} | \, \lambda \in \R_{+} \}$.

	\begin{lemma}
		\label{l:InclusionsWeightSeqEquivWeightFunc}
		Let $\M, \NN$ be two weight sequence systems. If $\M [\subseteq] \NN$, then $\W_{\M} [\subseteq] \W_{\NN}$. 
		If $\M$ is log-convex, the converse is also true.	
	\end{lemma}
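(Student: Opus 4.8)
The plan is to read off both inclusions directly from the interplay between a weight sequence and its associated function, treating the $(\subseteq)$- and $\{\subseteq\}$-cases simultaneously through the usual $[\,\cdot\,]$-bookkeeping of the quantifiers.

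For the forward implication no log-convexity is needed. Assuming $\M [\subseteq] \NN$, I fix the indices dictated by $[\subseteq]$ (given $\lambda$ choose $\mu$ when the symbol is $(\subseteq)$, given $\mu$ choose $\lambda$ when it is $\{\subseteq\}$) so that $M^{\mu} \subseteq N^{\lambda}$, say $M^{\mu}_{\alpha} \leq C N^{\lambda}_{\alpha}$ for all $\alpha \in \N^{n}$. Substituting this into the definition $\omega_{M}(x) = \sup_{\alpha} \log (|x^{\alpha}| / M_{\alpha})$ gives at once the pointwise estimate $\omega_{N^{\lambda}}(x) \leq \log C + \omega_{M^{\mu}}(x)$, that is, $\exp \omega_{N^{\lambda}}(t) = O(\exp \omega_{M^{\mu}}(t))$. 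Since the chosen indices match the required quantifier pattern, this is precisely $\W_{\M} [\subseteq] \W_{\NN}$.

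The converse is where log-convexity enters, through the representation formula \eqref{eq:WeightSeqFromWeightFunc}. Assuming $\W_{\M} [\subseteq] \W_{\NN}$ and fixing indices as before, I obtain $C > 0$ with $\exp \omega_{N^{\lambda}}(t) \leq C \exp \omega_{M^{\mu}}(t)$. I then estimate
$$ M^{\mu}_{\alpha} = \sup_{x \in \R^{n}} \frac{|x^{\alpha}|}{\exp \omega_{M^{\mu}}(x)} \leq C \sup_{x \in \R^{n}} \frac{|x^{\alpha}|}{\exp \omega_{N^{\lambda}}(x)} \leq C N^{\lambda}_{\alpha}, $$
where the first equality is \eqref{eq:WeightSeqFromWeightFunc} applied to the log-convex sequence $M^{\mu}$, the middle step uses the pointwise bound just obtained, and the last inequality is the trivial estimate $\exp \omega_{N^{\lambda}}(x) \geq |x^{\alpha}| / N^{\lambda}_{\alpha}$, which is valid for any weight sequence. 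Hence $M^{\mu} \subseteq N^{\lambda}$, and since the indices again match, $\M [\subseteq] \NN$.

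I expect no serious obstacle: the argument is essentially bookkeeping. The only points requiring care are the reversal of the index roles between the two sides of the relation, where $\exp \omega_{N^{\lambda}} = O(\exp \omega_{M^{\mu}})$ is paired with $M^{\mu} \subseteq N^{\lambda}$, and the directions of the suprema in the converse. It is worth remarking that only the log-convexity of $\M$ is actually used, via \eqref{eq:WeightSeqFromWeightFunc} for the sequences $M^{\mu}$; the $N^{\lambda}$-side needs only the defining inequality for the associated function, so the log-convexity hypothesis on $\NN$ is not strictly required here.
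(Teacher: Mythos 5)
Your argument is correct and fills in exactly the route the paper's one-line proof indicates: the forward direction from the definition of the associated function, and the converse from the representation formula \eqref{eq:WeightSeqFromWeightFunc}. Your closing observation that only the log-convexity of $\M$ is actually needed (since $\sup_{x}|x^{\alpha}|/\exp\omega_{N^{\lambda}}(x)\leq N^{\lambda}_{\alpha}$ holds for any weight sequence) is a valid minor sharpening of the statement.
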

	
	\begin{proof}
		
				 It is clear that $\M [\subseteq] \NN$ implies $\W_{\M} [\subseteq] \W_{\NN}$. 
			If $\M$ is log-convex, the converse follows from \eqref{eq:LogConvexMinorant} as $M^\mu_\alpha = (M^\mu)^{\text{lc}}_{\alpha}$ and $(N^\lambda)^{\text{lc}}_{\alpha} \leq N^\lambda_\alpha$ for any $\lambda, \mu > 0$ and $\alpha \in \N^n$.
	\end{proof}
	In the proof of the next result, we first state assertions for the Beurling case (i.e., the $(\:)$ case) followed in parenthesis by the corresponding statements for the Roumieu case ($\{\:\}$ case). We will use this convention throughout the rest of this article.
	\begin{lemma}
		\label{IandwI-1}
		Let $\M$ be a weight sequence system satisfying $[\condL]$. 
					\begin{itemize}
				\item[(i)]  If $\M$ satisfies $[\condwI]$, then $\W_{\M}$ satisfies $[\condwM]$.
				\item[(ii)]  If $\M$ satisfies $[\condI]$, then $\W_{\M}$ satisfies $[\condM]$.
			\end{itemize}
	\end{lemma}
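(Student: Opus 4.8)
The plan is to reduce both statements to purely sequence-level estimates and then feed these into the multinomial expansion of $(x+y)^{\alpha}$. Writing $\exp \omega_{M^{\lambda}}(x+y) = \sup_{\alpha \in \N^{n}} |(x+y)^{\alpha}|/M^{\lambda}_{\alpha}$ and bounding $|(x+y)^{\alpha}| \leq \sum_{\beta \leq \alpha} \binom{\alpha}{\beta} |x^{\beta}| |y^{\alpha - \beta}|$, where $\binom{\alpha}{\beta} = \prod_{j} \binom{\alpha_{j}}{\beta_{j}}$ and $\sum_{\beta \leq \alpha} \binom{\alpha}{\beta} = 2^{|\alpha|}$, and using $|x^{\beta}|/M^{\mu}_{\beta} \leq \exp \omega_{M^{\mu}}(x)$ and $|y^{\gamma}|/M^{\nu}_{\gamma} \leq \exp \omega_{M^{\nu}}(y)$, one sees that the whole argument goes through as soon as one can produce indices with
\begin{equation*}
	2^{|\alpha|} M^{\mu}_{\beta} M^{\nu}_{\alpha - \beta} \leq C M^{\lambda}_{\alpha}, \qquad \beta \leq \alpha, \tag{$\star$}
\end{equation*}
for part (ii), and with $2^{|\alpha|} M^{\mu}_{\beta} \leq C M^{\lambda}_{\alpha}$ (for $\beta \leq \alpha$) for part (i); in the latter case the factor $|y^{\alpha - \beta}|$ is simply dropped since $y \in B(0,1)$ forces $|y^{\alpha - \beta}| \leq 1$. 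Indeed, substituting $(\star)$ into the estimate above lets the sum $\sum_{\beta \leq \alpha} \binom{\alpha}{\beta}$ be exactly compensated by the denominator $2^{|\alpha|}$, so the resulting bound $\exp \omega_{M^{\lambda}}(x+y) \leq C \exp \omega_{M^{\mu}}(x) \exp \omega_{M^{\nu}}(y)$ (resp.\ $\leq C \exp \omega_{M^{\mu}}(x)$) no longer depends on $\alpha$ and survives the supremum. This is precisely $[\condM]$ (resp.\ $[\condwM]$).

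For part (ii) I would derive $(\star)$ by combining $[\condI]$ with $[\condL]$. First apply $[\condI]$ to obtain indices and constants $C_{0}, H$ --- with $H \geq 1$ without loss of generality --- such that $M^{\mu'}_{\beta} M^{\nu'}_{\gamma} \leq C_{0} H^{|\beta + \gamma|} M^{\lambda}_{\beta + \gamma}$; this removes the product structure at the cost of a factor $H^{|\alpha|}$. Then apply $[\condL]$ with the dilation constant $R = 2H$ to each of $\mu', \nu'$ (Beurling) or to the single index produced by $\{\condI\}$ (Roumieu), yielding indices $\mu, \nu$ (resp.\ $\lambda$) with $(2H)^{|\beta|} M^{\mu}_{\beta} \leq C_{1} M^{\mu'}_{\beta}$, etc. Multiplying, the factor $(2H)^{|\alpha|}$ cancels the $H^{|\alpha|}$ coming from $[\condI]$ and leaves exactly the surplus $2^{|\alpha|}$ on the favourable side, which is $(\star)$. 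The quantifier order must be tracked: in the Beurling case one starts from $\lambda$, applies $(\condI)$ to get $\mu', \nu', H$, and only afterwards applies $(\condL)$ with $R = 2H$; in the Roumieu case one starts from $\mu, \nu$, applies $\{\condI\}$ to get a single $\lambda'$ and $H$, and then applies $\{\condL\}$ with $R = 2H$ to $\lambda'$. In both cases $[\condL]$ is invoked on already-fixed indices, so no circular dependence on $H$ arises.

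Part (i) is entirely parallel but uses $[\condwI]$ in place of $[\condI]$. Here $[\condwI]$ supplies, for a free parameter $R$, the one-sided estimate $M^{\mu}_{\beta} R^{|\gamma|} \leq C H^{|\beta + \gamma|} M^{\lambda}_{\beta + \gamma}$ (again WLOG $H \geq 1$). Choosing $R = 2H$ makes the $\gamma$-dependence cancel against the target factor $2^{|\gamma|}$, but leaves a residual factor $(2H)^{|\beta|}$, which is absorbed using $[\condL]$ with $R = 2H$ exactly as in part (ii). In the Roumieu variant one additionally uses $(2H)^{|\beta|} \leq (2H)^{|\beta + \gamma|}$, valid since $2H \geq 1$, before applying $\{\condL\}$.

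I expect the only genuinely delicate point to be the bookkeeping around the combinatorial factor $2^{|\alpha|}$: a naive application of $[\condI]$ (or $[\condwI]$) removes the product structure and the factor $H^{|\alpha|}$, but it does not touch the $2^{|\alpha|}$ produced by summing the binomial coefficients. The clean cancellation occurs only because $[\condL]$ is applied with the dilation constant $2H$ rather than $H$; this is the crux of why $[\condL]$ is needed as a hypothesis in addition to $[\condI]$ / $[\condwI]$. The remaining ingredients --- the multinomial estimate, the elementary manipulations of the associated function, and reductions such as $H \geq 1$ --- are routine.
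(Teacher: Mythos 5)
Your proposal is correct and follows essentially the same route as the paper: expand $(x+y)^{\alpha}$ multinomially, use $[\condI]$ (resp.\ $[\condwI]$, dropping $|y^{\alpha-\beta}|\leq 1$) to decouple the indices at the cost of $H^{|\alpha|}$, and invoke $[\condL]$ with ratio $2H$ to absorb both that factor and the combinatorial $2^{|\alpha|}$. The only cosmetic difference is that you pre-combine $[\condL]$ with $[\condI]$ into the sequence inequality $(\star)$ before expanding, whereas the paper applies $[\condL]$ afterwards as the dilation estimate $\exp\omega_{M^{\lambda}}(2Hx)\leq C\exp\omega_{M^{\mu}}(x)$ on the associated functions; the quantifier bookkeeping you describe matches the paper's in both the Beurling and Roumieu cases.
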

\begin{proof} We only show (ii) as the proof of (i) is similar. Condition $[\operatorname{L}]$ implies  that 
		\begin{gather} 
		\label{Lwf}				
					\forall R > 0 \, \forall \lambda \in \R_{+} \, \exists \mu \in \R_{+} \, (\forall R > 0 \, \forall \mu \in \R_{+} \, \exists \lambda \in \R_{+}) \, \exists C > 0 \, \forall x \in \R^n \,: \\ \nonumber
					\exp \omega_{M^{\lambda}}(R x) \leq C \exp \omega_{M^{\mu}}(x).
				\end{gather}
For every $\lambda >0$ there are $\mu, \nu >0$ and $C,H >0$ (for every $\mu, \nu >0$ there are $\lambda >0$ and $C,H >0$) such that 
$$
 M^{\mu}_{\alpha}M^{\nu}_{\beta} \leq CH^{|\alpha + \beta|}M^{\lambda}_{\alpha + \beta }, \qquad \alpha, \beta \in \N^n.
$$
Hence, we obtain that for all $x,y \in \R^n$
			\begin{align*}
				\exp \omega_{M^{\lambda}}(x + y)
				&= \sup_{\alpha \in \N^{n}} \frac{\left| \sum_{\beta \leq \alpha} {\alpha \choose \beta} x^{\beta} y^{\alpha - \beta} \right| }{M^{\lambda}_{\alpha}} \\
				&\leq C \sup_{\alpha \in \N^{n}} 2^{-|\alpha|} \sum_{\beta \leq \alpha} {\alpha \choose \beta} \frac{|(2Hx)^{\beta}|}{M^{\mu}_{\beta}} \frac{|(2Hy)^{\alpha - \beta}|}{M^{\nu}_{\alpha - \beta}} \\
				&\leq C \exp \omega_{M^{\mu}}(2Hx) \exp \omega_{M^{\nu}}(2Hy) .
	\end{align*}
The result now follows from \eqref{Lwf}.
\end{proof}	
	
To conclude 
 this section, following \cite[Section 5]{R-S-CompUltradiffClass}, we introduce weight sequence systems and weight function systems generated by a weight function in the sense of \cite{B-M-T-UltradiffFuncFourierAnal}. We consider the following conditions on  a non-decreasing continuous function $\omega: [0,\infty) \to  [0,\infty)$:
	\begin{itemize}
		\item[$(\alpha)$] $\omega(2t) = O(\omega(t))$.
		\item[$(\gamma)$] $\log t = o(\omega(t))$.
		\item[$(\delta)$] $\phi : [0, \infty) \to [0, \infty)$, $\phi(x)= \omega(e^{x})$ is convex.
	\end{itemize} 
We call $\omega$ a \emph{Braun-Meise-Taylor weight function (BMT weight function)} if $\omega_{\mid[0, 1]} \equiv 0$ and $\omega$ satisfies the above conditions. In such a case, we define the \emph{Young conjugate} of $\phi$ as
	\[ \phi^{*} : [0, \infty) \to [0, \infty) , \quad \phi^{*}(y) = \sup_{x \geq 0} (yx - \phi(x)).  \]
We define $\M_{\omega} = \{ M^{\lambda}_{\omega} \mid \lambda \in \R_{+} \}$, where $M^{\lambda}_{\omega} = (\exp(\frac{1}{\lambda} \phi^{*}(\lambda |\alpha|)))_{\alpha \in \N^{n}}$. In \cite[Corollary 5.15]{R-S-CompUltradiffClass}  it is shown that $\M_{\omega}$ is a log-convex weight sequence system satisfying $[\condL]$.  For two BMT weight functions $\omega$ and $\eta$,  one has
$\M_{\omega} [\subseteq] \M_{\eta}$ if and only if $\eta(t) = O(\omega(t))$ \cite[proof of Corollary 5.17]{R-S-CompUltradiffClass}.

Given a non-decreasing continuous function $\omega: [0,\infty) \to  [0,\infty)$ tending to infinity, we define $\W_{\omega} = \{ e^{\frac{1}{\lambda}\omega(| \, \cdot \, |)} \, | \, \lambda \in \R^{+} \}$. 
Then, by \cite[Lemma 1.2]{B-M-T-UltradiffFuncFourierAnal}, $\omega$ satisfies $(\alpha)$ if and only if $\W_{\omega}$ satisfies $[\condM]$. For two  non-decreasing continuous functions with $\eta: [0,\infty) \to  [0,\infty)$ tending to infinity,
we have $\W_{\omega} [\subseteq] \W_{\eta}$ if and only if $\eta(t) = O(\omega(t))$.

\section{Statement of the main results}
\label{sec:MainResults}
In this section, we give an overview of our main results. 
Let $E$ be a solid TIBF of bounded type. For a weight sequence $M$ and  a weight function $w$ we define $E^M_w$ as the Banach space consisting of all $f \in C^\infty(\R^n)$ such that $f^{(\alpha)} w \in E$ for all $\alpha \in \N^n$ and 
	\[ \| f \|_{E, M, w} = \sup_{\alpha \in \N^{n}} \frac{\| f^{(\alpha)} w \|_{E}}{M_{\alpha}} < \infty. \]
Given a weight sequence system $\M$ and a weight function system $\W$, we define the Gelfand-Shilov type spaces
	\begin{equation}
	\label{spaces}
	E^{(\M)}_{(\W)} = \varprojlim_{\lambda \to 0^{+}} E^{M^{\lambda}}_{w^{\lambda}} , \qquad E^{\{\M\}}_{\{\W\}} = \varinjlim_{\lambda \to \infty} E^{M^{\lambda}}_{w^{\lambda}} . 
	\end{equation}
Then, $E^{(\M)}_{(\W)}$ is a Fr\'{e}chet space and $E^{\{\M\}}_{\{\W\}}$  is an $(LB)$-space.  If $\W$ satisfies $[\condwM]$, the space $E^{[\M]}_{[\W]}$ is translation-invariant, as follows by iterating $[\condwM]$. 
Given another weight sequence system $\mathfrak{N}$, we write $E^{[\M]}_{[\mathfrak{N}]} = E^{[\M]}_{[\W_{\mathfrak{N}}]}$. If  $\mathfrak{N}$ satisfies $[\condL]$, then, for any $f \in C^\infty(\R^n)$,
	\[ f \in E^{[\M]}_{[\mathfrak{N}]}  \quad \Longleftrightarrow \quad \forall \lambda > 0 ~ (\exists \lambda > 0) : \sup_{\alpha, \beta \in \N^n} \frac{\| x^\beta f^{(\alpha)} \|_{E}}{M^\lambda_{\alpha} N^\lambda_{\beta}} < \infty . \]
Let $p \in \{0\} \cup [1,\infty]$. We write $(L^p)^{[\M]}_{[\W]} =\mathcal{S}^{[\M]}_{[\W],p}$.
Given two weight sequences $M$ and $A$, we define $\mathcal{S}^{[M]}_{[A],p} = \mathcal{S}^{[\M_M]}_{[\W_A],p} = (L^p)^{[\M_M]}_{[\M_A]} $. 
Similarly, given a BMT weight function $\omega$ and a non-decreasing continuous function $\eta: [0,\infty) \to [0,\infty)$ tending to infinity, we set $\mathcal{S}^{[\omega]}_{[\eta],p} = \mathcal{S}^{[\M_\omega]}_{[\W_\eta],p}$. The spaces $\mathcal{S}^{[M]}_{[A],p}$ (for isotropic weight sequences $M$ and $A$) and $\mathcal{S}^{[\omega]}_{[\eta],p}$ were already considered in the introduction.

Fix a solid TIBF of bounded type $E$. Let $\M, \NN$ be weight sequence systems and let $\W, \V$ be weight function systems. Note that if $\M [\subseteq] \NN$ and $\W [\subseteq] \V$, then $E^{[\M]}_{[\W]} \subseteq E^{[\NN]}_{[\V]}$ continuously.
The main goal of this article is to prove the converse of this 
statement under minimal assumptions on the involved weight sequence systems and weight function systems. More precisely, we will show the following two results.  Their proofs will be given in the next section.

	\begin{theorem}
		\label{t:InclusionCharFixedWeightMatrix}
	
		Assume that $\M$ satisfies $[\condL]$ and $[\condwI]$, $\NN$ satisfies $[\condL]$, $\W$ satisfies $[\condM]$, and $\V$ satisfies $[\condwM]$. Suppose that $E^{[\M]}_{[\W]} \neq \{0\}$. 
		If $E^{[\M]}_{[\W]} \subseteq E^{[\NN]}_{[\V]}$ as sets, then $\W [\subseteq] \V$.
				\end{theorem}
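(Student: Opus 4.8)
The plan is to first upgrade the set-theoretic inclusion $E^{[\M]}_{[\W]} \subseteq E^{[\NN]}_{[\V]}$ to a continuous one, and then to feed a single well-chosen family of test functions — the translates $T_x \varphi$ of a fixed nonzero element — through the resulting seminorm estimate in order to read off the growth comparison $v^\lambda = O(w^\mu)$.

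First I would argue that the inclusion is automatically continuous. In the Beurling case both spaces are Fr\'echet, so the closed graph theorem applies; the graph is closed because convergence in either space forces local uniform convergence, so the two limits coincide. In the Roumieu case both spaces are $(LB)$-spaces, hence webbed and ultrabornological, and De Wilde's closed graph theorem applies for the same reason. Continuity then yields a seminorm estimate: in the Beurling case (projective limits) for every target index $a$ there are a source index $b$ and $C>0$ with $\| f \|_{E, N^a, v^a} \le C \| f \|_{E, M^b, w^b}$; in the Roumieu case (inductive limits, via Grothendieck's factorization theorem) the quantifiers on $a$ and $b$ are interchanged. Since $\W$ satisfies $[\condM]$, hence $[\condwM]$, the space $E^{[\M]}_{[\W]}$ is translation-invariant; as it is nonzero I fix $\varphi \in E^{[\M]}_{[\W]}$ and, after a translation, assume $\varphi(0) \neq 0$, so that $\delta := \| \varphi 1_{B(0,1)} \|_E > 0$ by solidity and $1_{B(0,1)} \in E$. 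All translates $T_x \varphi$ then lie in the space.

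The heart of the argument is two matching estimates for $T_x \varphi$, uniform in $x \in \R^n$. I treat the Beurling case and fix the prescribed index $\lambda$. Applying $[\condwM]$ for $\V$ at $\lambda$ produces an index $a$ and $C_1 > 0$ with $v^\lambda(x) \le C_1 v^a(t)$ whenever $|t-x| < 1$; I use the continuity estimate at this target index $a$ to obtain $b$ and $C_2$; finally $[\condM]$ for $\W$ at $b$ yields indices $\mu, \nu$ and $C_3$ with $w^b(x+s) \le C_3 w^\mu(x) w^\nu(s)$. For the lower bound, evaluating the target seminorm at $\alpha = 0$ (where $N^a_0 = 1$), restricting to $B(x,1)$ by solidity, inserting $v^a(t) \ge C_1^{-1} v^\lambda(x)$, and using the translation lower bound $\| T_x g \|_E \ge C_0^{-1} \| g \|_E$ coming from (A.1)--(A.2), I get
\[
\| T_x \varphi \|_{E, N^a, v^a} \ge \| T_x \varphi \, v^a 1_{B(x,1)} \|_E \ge C_1^{-1} C_0^{-1} \delta \, v^\lambda(x).
\]
For the upper bound, the pointwise inequality $|T_x(\varphi^{(\alpha)})(t)| w^b(t) \le C_3 w^\mu(x) \, |T_x(\varphi^{(\alpha)} w^\nu)(t)|$, together with solidity, (A.2), and monotonicity of $\M$ and $\W$ in the index (writing $\kappa = \min(b,\nu)$, so $w^\nu \le w^\kappa$ and $M^b \ge M^\kappa$), yields
\[
\| T_x \varphi \|_{E, M^b, w^b} \le C_3 C_0 \, w^\mu(x) \, \| \varphi \|_{E, M^\kappa, w^\kappa},
\]
where the last seminorm is finite because $\varphi \in E^{(\M)}_{(\W)}$. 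Chaining the lower bound, the continuity estimate, and the upper bound gives $v^\lambda(x) \le C w^\mu(x)$ for all $x$, i.e. $v^\lambda = O(w^\mu)$, which is exactly $\W (\subseteq) \V$.

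I expect the main obstacle to be the bookkeeping of the indices so that the quantifier pattern of the conclusion comes out correctly. The subtlety is that $[\condwM]$ and $[\condM]$ relate \emph{different} indices, so the weight $v^\lambda$ I wish to bound is not the weight $v^a$ appearing in the seminorm; the argument closes only because I am free to take the continuity target index to be precisely the $[\condwM]$-companion $a$ of the prescribed $\lambda$, after which $[\condM]$ delivers the output index $\mu$. In the Roumieu case the same two estimates apply verbatim, but the universal and existential quantifiers in the continuity estimate and in $[\condM]$, $[\condwM]$ are interchanged, giving $\W \{\subseteq\} \V$; the one extra point of care is to keep the fixed seminorm of $\varphi$ finite while threading the flipped indices, which is handled by the monotonicity of the systems together with the fact that $\varphi$ lies in $E^{M^\lambda}_{w^\lambda}$ for some index.
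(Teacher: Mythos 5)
Your proof is correct, but it takes a genuinely different route from the paper. The paper discretizes: it constructs a synthesis operator $R_{\psi}(c)=\sum_j c_j T_j\psi$ with a special window $\psi\in\mathcal{S}^{[\M]}_{[\W_{n+1}],\infty}$ satisfying $\psi(j)=\delta_{j,0}$ (Lemmas \ref{lemma-nt} and \ref{l:SpecificWindow1}), and a sampling operator $S(f)=(f(j))_j$ built via the Schwartz parametrix method (Lemma \ref{l:EvaluationMap}), so that $S\circ R_\psi=\operatorname{id}$ transfers the inclusion to the weighted sequence spaces $E_{d,[\W]}\subseteq E_{d,[\V]}$, where it is read off by testing on delta sequences (Proposition \ref{t:InclusionWeightedSpaces}). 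You instead apply De Wilde's closed graph theorem and Grothendieck factorization directly to the function spaces and test the resulting seminorm estimate on the family of translates $T_x\varphi$ of a single nonzero element, with a matching lower bound (solidity, (A.2) and $[\condwM]$ for $\V$) and upper bound ($[\condM]$ for $\W$). Your index bookkeeping in both the Beurling and Roumieu cases is sound, and the argument is shorter: it dispenses with the sequence-space machinery, the window construction, and the parametrix. Notably, it nowhere uses $[\condL]$ or $[\condwI]$ for $\M$, nor $[\condL]$ for $\NN$, so it in fact establishes the theorem under weaker hypotheses than stated (the paper needs those conditions only to produce the special window). Two cosmetic points: convergence in these spaces forces convergence in $L^1_{\loc}(\R^n)$ (via $E\subseteq L^1_{\loc}$ and $w^\lambda\geq 1$), which is what identifies the two limits in the closed graph argument -- local uniform convergence is more than you are given; and the positivity of $\delta=\|\varphi 1_{B(0,1)}\|_E$ follows from the injectivity of the continuous embedding $E\subseteq L^1_{\loc}(\R^n)$ together with $\varphi(0)\neq 0$, rather than from $1_{B(0,1)}\in E$.
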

	
		\begin{theorem}
		\label{t:InclusionCharFixedWeightFuncSystem}
		Assume that $\M$ is log-convex and satisfies $[\condL]$ and $[\condI]$, 
		$\NN$
		satisfies $[\condL]$ and $[\condwI]$,  and $\W$ and $\V$ satisfy $[\condwM]$. 
		Suppose that $E^{[\M]}_{[\W]} \neq \{0\}$.
		If $E^{[\M]}_{[\W]} \subseteq E^{[\NN]}_{[\V]}$ as sets, then $\M [\subseteq] \NN$.
	\end{theorem}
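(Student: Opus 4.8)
The plan is to first upgrade the set-theoretic inclusion to a continuous one, then to transfer the whole problem to the associated weight function systems $\W_{\M}$ and $\W_{\NN}$, where the log-convexity hypotheses let me invoke Lemma \ref{l:InclusionsWeightSeqEquivWeightFunc}. The guiding principle is that, just as Theorem \ref{t:InclusionCharFixedWeightMatrix} probes the weight functions by \emph{translating} a fixed nonzero element, here one should probe the weight sequences by \emph{modulating} it. Concretely, both $E^{[\M]}_{[\W]}$ and $E^{[\NN]}_{[\V]}$ embed continuously into $C^{\infty}(\R^{n})$, so the inclusion map has closed graph. In the Beurling case both spaces are Fr\'echet and the closed graph theorem applies at once; in the Roumieu case they are $(LB)$-spaces, hence ultrabornological and webbed, so De Wilde's closed graph theorem again gives continuity, and Grothendieck's factorization theorem lets each Banach step of the source factor through a single step of the target. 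Unravelling the projective (resp.\ inductive) limit topology, this yields in both cases the quantitative estimate: for each $\lambda$ there is $\mu$ (resp.\ for each $\mu$ there is $\lambda$) and a constant $C > 0$ such that
\[
\| f \|_{E, N^{\lambda}, v^{\lambda}} \leq C \| f \|_{E, M^{\mu}, w^{\mu}} , \qquad f \in E^{[\M]}_{[\W]} .
\]

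Next I would fix $0 \neq \varphi \in E^{[\M]}_{[\W]}$ and, for $\xi \in \R^{n}$, put $f_{\xi} = e^{i \langle \xi, \cdot \rangle} \varphi$. Since $|e^{i \langle \xi, x \rangle}| = 1$, modulation leaves the weight function part of the norm untouched, so only the sequence conditions on $\M$ are needed to keep $f_{\xi}$ in the space. Using the Leibniz rule, the elementary bound $|x^{\delta}| \leq M^{\nu}_{\delta} \exp \omega_{M^{\nu}}(\xi)$, condition $[\condI]$ on $\M$ to dominate $\binom{\gamma}{\beta} M^{\nu}_{\gamma - \beta} M^{\mu_{0}}_{\beta}$ by $C H^{|\gamma|} M^{\lambda_{1}}_{\gamma}$, and condition $[\condL]$ to absorb the resulting factor $(2H)^{|\gamma|}$, one obtains $f_{\xi} \in E^{[\M]}_{[\W]}$ together with a uniform bound
\[
\| f_{\xi} \|_{E, M^{\rho}, w^{\rho}} \leq K \exp \omega_{M^{\nu}}(\xi)
\]
for suitable indices and a constant $K$ independent of $\xi$. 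Feeding this into the continuity estimate from the first step produces $\| f_{\xi} \|_{E, N^{\lambda}, v^{\lambda}} \leq C K \exp \omega_{M^{\nu}}(\xi)$.

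The crux is the matching \emph{lower} bound for the target norm. Writing $\partial^{\gamma} f_{\xi} = e^{i \langle \xi, \cdot \rangle} (\partial + i \xi)^{\gamma} \varphi$ and isolating the leading term $(i \xi)^{\gamma} \varphi$, the goal is
\[
\| f_{\xi} \|_{E, N^{\lambda}, v^{\lambda}} \geq c \sup_{\gamma \in \N^{n}} \frac{|\xi^{\gamma}|}{N^{\lambda'}_{\gamma}} = c \exp \omega_{N^{\lambda'}}(\xi) ,
\]
the last equality being the extremal formula \eqref{eq:WeightSeqFromWeightFunc} for the log-convex sequence $N^{\lambda'}$. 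The obstacle, which I expect to carry essentially all the difficulty, is that $\partial^{\gamma} f_{\xi}$ oscillates and drags along the Leibniz error terms $\sum_{0 < \beta \leq \gamma} \binom{\gamma}{\beta} (i \xi)^{\gamma - \beta} \varphi^{(\beta)}$, so neither a pointwise value nor a crude localization to a ball controls the $E$-norm uniformly in $\gamma$. To overcome this I would use the coorbit-type machinery of Section \ref{s:prel}: the sampling map $S_{\chi} \colon E \to E_{d} \hookrightarrow \ell^{\infty}$ of Lemma \ref{l:ConvEval} turns $E$-norms of $\partial^{\gamma} f_{\xi}\, v^{\lambda}$ into short-time samples, and the synthesis estimate of Lemma \ref{l:EdToE} controls the reconstruction. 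Condition $[\condwI]$ on $\NN$ is then precisely the tool that renders the error negligible: it dominates $N^{\mu}_{\beta} R^{|\gamma - \beta|}$ by $C H^{|\gamma|} N^{\lambda}_{\gamma}$, so the lower-order terms are swamped by the leading one once one passes from the index $\lambda$ to a shifted index $\lambda'$.

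Finally, combining the upper and lower bounds gives $\exp \omega_{N^{\lambda'}}(\xi) \leq C' \exp \omega_{M^{\nu}}(\xi)$ for all $\xi$, with the correct $[\,\cdot\,]$-quantifier relation between the indices; this is exactly $\W_{\M} [\subseteq] \W_{\NN}$. Since $\M$ and $\NN$ are log-convex, the converse part of Lemma \ref{l:InclusionsWeightSeqEquivWeightFunc} converts this into $\M [\subseteq] \NN$, which is the desired conclusion. The first, second and fourth steps are formal once $[\condL]$, $[\condI]$ and the extremal formula \eqref{eq:WeightSeqFromWeightFunc} are in place; the third step, and in particular the uniform-in-$\gamma$ control of oscillation and of the Leibniz errors via the sampling lemmas and $[\condwI]$, is where the real work resides.
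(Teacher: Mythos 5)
Your overall frame --- closed graph plus Grothendieck factorization to get a quantitative estimate, an upper bound for modulated elements via $[\condL]$ and $[\condI]$, and the final passage from $\W_{\M}[\subseteq]\W_{\NN}$ to $\M[\subseteq]\NN$ through \eqref{eq:WeightSeqFromWeightFunc} and Lemma \ref{l:InclusionsWeightSeqEquivWeightFunc} --- matches the paper's endgame, and your upper bound $\|f_{\xi}\|_{E,M^{\rho},w^{\rho}}\leq K\exp\omega_{M^{\nu}}(\xi)$ is indeed routine. But the gap is exactly where you locate it, and the tools you name do not close it. For the lower bound you must extract $|\xi^{\gamma}|$ from $\|\partial^{\gamma}f_{\xi}\,v^{\lambda}\|_{E}$ uniformly in $\gamma$, and both routes you hint at fail. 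First, any sampling functional $f\mapsto f*\chi(j)$ applied to $\partial^{\gamma}f_{\xi}\,v^{\lambda}$ integrates $e^{i\langle\xi,t\rangle}$ against a fixed window, so the leading term is $(i\xi)^{\gamma}$ times a Fourier coefficient of an $L^{1}$ function, which tends to $0$ as $|\xi|\to\infty$; no lower bound of order $|\xi^{\gamma}|$ survives the oscillation. Second, the pointwise reverse triangle inequality $|\partial^{\gamma}f_{\xi}|\geq|\xi^{\gamma}||\varphi|-\sum_{0<\beta\leq\gamma}\binom{\gamma}{\beta}|\xi^{\gamma-\beta}||\varphi^{(\beta)}|$ requires the Leibniz error to be dominated by the leading term for the $\gamma$ that nearly maximizes $|\xi^{\gamma}|/N^{\lambda'}_{\gamma}$; since $\binom{\gamma}{\beta}$ grows like $|\gamma|^{|\beta|}/\beta!$ and the optimal $|\gamma|$ grows with $|\xi|$, the terms $\binom{\gamma}{\beta}M^{\mu}_{\beta}/|\xi^{\beta}|$ are of order $1$ already for sequences like $M_{\alpha}=\alpha!$ (for which the spaces are nontrivial), so the error is not negligible without a strong non-analyticity assumption that is nowhere among the hypotheses. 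Condition $[\condwI]$ on $\NN$ controls ratios $N^{\mu}_{\beta}R^{|\gamma-\beta|}/N^{\lambda}_{\gamma}$ but touches neither the binomial factor nor the oscillation, so it cannot ``swamp'' the error terms as claimed.

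The paper sidesteps this entirely by de-windowing the modulated function: it builds $\psi$ with $\sum_{j}T_{j}\psi\equiv1$ (Lemma \ref{l:SpecificWindow2}) and transfers the inclusion to spaces of $\Z^{n}$-periodic functions via $f=\Pi(L_{\psi}(f))$ (Lemmas \ref{l:ContInclusionTildeSp}, \ref{hulp1} and \ref{hulp2}). In $E^{[\NN]}_{\operatorname{per}}$ the pure exponentials $f_{k}(x)=e^{2\pi i\langle k,x\rangle}$ are legitimate test elements, $|f_{k}^{(\gamma)}|\equiv|(2\pi k)^{\gamma}|$ is constant, and the periodic norm equals $\|1_{[0,1]^{n}}\|_{E}\exp\omega_{N^{\lambda}}(2\pi k)$ exactly: no window, no Leibniz error, no oscillation. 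The conclusion then follows as in your fourth step (using Lemma \ref{IandwI-1} to pass from $\Z^{n}$ to $\R^{n}$). If you wish to keep the modulation idea, you essentially need this periodization device; as written, your third step is not a proof.
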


Theorems \ref{t:InclusionCharFixedWeightMatrix} and \ref{t:InclusionCharFixedWeightFuncSystem}  yield  the following characterization of the inclusion relations  for the type of   spaces defined in \eqref{spaces}.

	\begin{theorem}
		\label{t:InclusionChar}
		Assume that $\M$ is log-convex and satisfies $[\condL]$ and $[\condI]$, 
		$\NN$ 
		satisfies $[\condL]$ and $[\condwI]$, $\W$ satisfies $[\condM]$, and $\V$ satisfies $[\condwM]$. Suppose that $E^{[\M]}_{[\W]} \neq \{0\}$. Then, the following statements are equivalent:
			\begin{itemize}
				\item[(i)] $\M [\subseteq] \NN$ and $\W [\subseteq] \V$.
				\item[(ii)] $E^{[\M]}_{[\W]} \subseteq E^{[\NN]}_{[\V]}$ as sets.
				\item[(iii)] $E^{[\M]}_{[\W]} \subseteq E^{[\NN]}_{[\V]}$ continuously.
			\end{itemize}
	\end{theorem}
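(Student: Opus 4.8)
The plan is to close the cycle of implications $(i) \Rightarrow (iii) \Rightarrow (ii) \Rightarrow (i)$, where the only step carrying genuine analytic content is the last one, which I would deduce by feeding the set inclusion from $(ii)$ into the two preceding theorems. Since Theorems~\ref{t:InclusionCharFixedWeightMatrix} and~\ref{t:InclusionCharFixedWeightFuncSystem} already establish the growth relations from an inclusion of spaces, the proof of Theorem~\ref{t:InclusionChar} amounts to verifying that its hypotheses are strong enough to invoke both results.

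The implication $(i) \Rightarrow (iii)$ is exactly the elementary observation recorded immediately before Theorem~\ref{t:InclusionCharFixedWeightMatrix}: when $\M [\subseteq] \NN$ and $\W [\subseteq] \V$, the defining seminorms of $E^{[\NN]}_{[\V]}$ are controlled by those of $E^{[\M]}_{[\W]}$, so the inclusion is continuous. The implication $(iii) \Rightarrow (ii)$ is trivial, a continuous inclusion being in particular an inclusion of underlying sets.

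For the essential implication $(ii) \Rightarrow (i)$, I would apply the two earlier theorems to the set inclusion $E^{[\M]}_{[\W]} \subseteq E^{[\NN]}_{[\V]}$. To obtain $\W [\subseteq] \V$ via Theorem~\ref{t:InclusionCharFixedWeightMatrix}, the required hypotheses are met: $\M$ satisfies $[\condL]$ by assumption and $[\condwI]$ because $[\condI]$ implies $[\condwI]$ (Section~\ref{s:wss}); $\NN$ satisfies $[\condL]$; $\W$ satisfies $[\condM]$; and $\V$ satisfies $[\condwM]$. To obtain $\M [\subseteq] \NN$ via Theorem~\ref{t:InclusionCharFixedWeightFuncSystem}, all of the log-convexity and $[\condL]$, $[\condI]$ assumptions on $\M$, together with the log-convexity and $[\condL]$, $[\condwI]$ assumptions on $\NN$, hold by hypothesis, while the requirement that both $\W$ and $\V$ satisfy $[\condwM]$ follows since $\V$ satisfies $[\condwM]$ directly and $\W$ satisfies the stronger $[\condM]$, which implies $[\condwM]$. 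Combining the two conclusions yields $\M [\subseteq] \NN$ and $\W [\subseteq] \V$, which is precisely $(i)$.

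The sole point demanding care---and the closest thing to an obstacle---is this final hypothesis reconciliation: one must notice that the strong conditions $[\condI]$ on $\M$ and $[\condM]$ on $\W$ assumed in Theorem~\ref{t:InclusionChar} automatically supply, respectively, the weaker $[\condwI]$ needed for $\M$ in Theorem~\ref{t:InclusionCharFixedWeightMatrix} and the weaker $[\condwM]$ needed for $\W$ in Theorem~\ref{t:InclusionCharFixedWeightFuncSystem}. Once these two implications between conditions are invoked, the two quoted theorems apply verbatim and no further analysis is required.
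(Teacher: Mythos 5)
Your proposal is correct and coincides with the paper's own argument: the authors derive Theorem~\ref{t:InclusionChar} directly from Theorems~\ref{t:InclusionCharFixedWeightMatrix} and~\ref{t:InclusionCharFixedWeightFuncSystem} together with the elementary observation, recorded just before Theorem~\ref{t:InclusionCharFixedWeightMatrix}, that $\M [\subseteq] \NN$ and $\W [\subseteq] \V$ give a continuous inclusion. Your hypothesis reconciliation (using that $[\condI]$ implies $[\condwI]$ and $[\condM]$ implies $[\condwM]$) is exactly the right bookkeeping and matches the paper.
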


 Theorems \ref{t:InclusionCharClassicalGS}  and \ref{t:InclusionCharBMT} from the introduction are consequences of Theorem \ref{t:InclusionChar} and the properties stated in Subsection \ref{s:wss}.
 
We end this section by comparing our spaces with the ones considered in \cite{BJOS-inc, BJOS-lc}.
 	\begin{remark}
		\label{r:ComparisonBJOS}
	For a solid TIBF of bounded type $E$ and a weight sequence system $\M$ we define $E_{(\M)}$ ($E_{\{\M\}}$) as the space consisting of all functions $f \in C^\infty(\R^n)$ such that
			\[ \forall \lambda > 0 \, (\exists \lambda > 0) \, : \, \sup_{\alpha, \beta \in \N^n} \frac{\| x^\beta f^{(\alpha)} \|_E}{M^\lambda_{\alpha + \beta}} < \infty , \]
		endowed with its natural Fr\'echet space topology ($(LB)$-space topology).
		Under the assumption of $[\condL]$ on $\M$, the spaces $(L^\infty)_{[\M]}$ are the spaces considered in  \cite{BJOS-inc, BJOS-lc} (denoted there by $\mathcal{S}_{\{\mathcal{M}\}}(\R^d)$  and  $\mathcal{S}_{(\mathcal{M})}(\R^d)$).
		Consider  the condition $[\M.2]$ ($(\M_{[\text{mg}]})$ in \cite{R-S-CompUltradiffClass}) for $\M$, 
			\[ \forall \lambda \in \R_+ \, \exists \mu \in \R_+ \, (\forall \mu \in \R_+ \, \exists \lambda \in \R_+) \, \exists H > 0 \, \forall \alpha, \beta \in \N^n : M^{\mu}_{\alpha + \beta} \leq H^{|\alpha + \beta|} M^\lambda_\alpha M^\lambda_\beta . \]
		Assume that $\M$ satisfies $[\condL]$.  If $\M$ satisfies $[\condI]$, then $E^{[\M]}_{[\W_\M]} \subseteq E_{[\M]}$ continuously, while, if $\M$ satisfies $[\M.2]$, then $E_{[\M]} \subseteq E^{[\M]}_{[\W_\M]}$ continuously. Consequently, by Theorem \ref{t:InclusionChar}, the following is true: Suppose that $\M$ is a log-convex weight sequence system satisfying $[\condL]$, $[\condI]$, and $E_{[\M]} \neq \{0\}$, and that $\NN$ is a weight sequence system satisfying $[\condL]$, $[\condwI]$, and $[\M.2]$. Then, $E_{[\M]} \subseteq E_{[\NN]}$ (continuously or as sets) if and only if $\M [\subseteq] \NN$.
		In particular, under the assumption of $[\condL]$ for $\M$ and $\NN$, for $E = L^\infty$, we recover (iii) (resp.\ (i)) of \cite[Theorem 6.1]{BJOS-lc}
		where we may drop the assumptions \cite[(6.4) and (6.5)]{BJOS-lc} (resp.\ \cite[(6.1) and (6.2)]{BJOS-lc}),
			and for $\M$ weaken it to $(L^\infty)_{[\M]} \neq \{0\}$,
			while for $\NN$ we need to impose $[\condwI]$ and $[\M.2]$ (note that \cite[(6.1) and (6.4)]{BJOS-lc} implies $[\condwI]$, but $[\M.2]$ implies \cite[(6.2) and (6.5)]{BJOS-lc}). 
			\end{remark}

\section{Proofs of the main results}\label{s:proofs}
The goal of this section is to show Theorems \ref{t:InclusionCharFixedWeightMatrix} and \ref{t:InclusionCharFixedWeightFuncSystem}. We fix a solid TIBF of bounded type $E$, two weight sequence systems $\M, \NN$, and two weight function systems $\W, \V$. 
For $k \in \N$, we define the weight function system $\W_k = \{ \jb{\, \cdot \,}^{k}w^\lambda \, | \, \lambda \in \R_+ \}$.
Note that $\jb{x+y} \leq \sqrt{2} \jb{x} \jb{y}$ for all $x, y \in \R^n$. Consequently, $\mathcal{W}_k$ satisfies $[\condwM]$ ($[\condM]$) if $\mathcal{W}$ does so.

	\begin{lemma}
		\label{lemma-nt} 
		Assume that $\M$ satisfies $[\condL]$ and $[\condwI]$,  and $\W$ satisfies $[\condwM]$. Let $k \in \N$ be arbitrary. If  $E^{[\M]}_{[\W]}  \neq \{0 \}$, then $\mathcal{S}^{[\M]}_{[\W_k], \infty} \neq \{0\}$.
	\end{lemma}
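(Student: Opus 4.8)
The plan is to fix a nonzero $f \in E^{[\M]}_{[\W]}$ and construct an explicit nonzero element of $\mathcal{S}^{[\M]}_{[\W_k],\infty} = (L^\infty)^{[\M]}_{[\W_k]}$ of the form $g = \theta \cdot (f * \theta_0)$, where $\theta_0 \in C^\infty_c(\R^n)$ is a fixed nonzero function and $\theta$ is a fixed \emph{band-limited} function, i.e.\ $\widehat{\theta} \in C^\infty_c(\R^n)$. The two factors play complementary roles: convolving with $\theta_0$ passes from the ambient space $E$ to $L^\infty$ \emph{without raising the order of differentiation}, while multiplying by $\theta$ supplies the extra polynomial decay $\jb{\, \cdot \,}^{k}$ demanded by $\W_k$. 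Choosing $\theta_0$ as an approximate identity and $\theta$ with $\theta(x_0) \neq 0$ at a point $x_0$ where $f * \theta_0 \neq 0$ (possible since $\{\theta \neq 0\}$ is dense) guarantees $g \neq 0$.

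First I would record the two mechanisms as pointwise bounds, uniform in $x \in \R^n$. Writing $g^{(\alpha)} = \sum_{\gamma \leq \alpha} \binom{\alpha}{\gamma} \theta^{(\gamma)} (f^{(\alpha - \gamma)} * \theta_0)$, I estimate the factors separately. For the convolution factor, the continuous inclusion $E \subseteq L^{1}_{\loc}(\R^n)$ together with (A.2) gives the uniform local bound $\|h\|_{L^1(x - \supp \theta_0)} \leq C \|h\|_E$; combining this with the comparison $w^{\nu}(x) \leq C w^{\nu'}(t)$ for $t \in x - \supp \theta_0$ (a finite iteration of $[\condwM]$) yields
\[ |(f^{(\alpha-\gamma)} * \theta_0)(x)| \, w^{\nu}(x) \leq C \|f^{(\alpha - \gamma)} w^{\nu'}\|_E \leq C M^{\nu'}_{\alpha - \gamma}, \]
with no increase in the order of differentiation. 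For the multiplier factor, band-limitedness gives the crucial \emph{geometric} bound $\| \jb{\, \cdot \,}^{k} \theta^{(\gamma)}\|_{L^\infty} \leq C_k R_1^{|\gamma|}$: on the Fourier side $\jb{\, \cdot \,}^k \theta^{(\gamma)}$ is supported in a fixed ball of radius $R_0$, and differentiating $\xi^{\gamma}$ produces only factors $\leq |\gamma|^{k}$, absorbed into $(R_0+1)^{|\gamma|}$ with $R_1 := R_0 + 1$. Putting these together I obtain, uniformly in $x$,
\[ |g^{(\alpha)}(x)| \jb{x}^{k} w^{\nu}(x) \leq C \sum_{\gamma \leq \alpha} \binom{\alpha}{\gamma} R_1^{|\gamma|} M^{\nu'}_{\alpha - \gamma}, \]
so it remains to bound the right-hand side by $C M^{\nu}_{\alpha}$ for a suitable small source index $\nu'$.

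This last point is the delicate one, and it is where $[\condwI]$ and $[\condL]$ enter, in this order. Given the target index $\nu$, I would first apply $[\condwI]$ (with $\lambda = \nu$) to obtain $\sigma$ and $H$ with $M^{\sigma}_{\alpha - \gamma} R^{|\gamma|} \leq C_R H^{|\alpha|} M^{\nu}_{\alpha}$ for every $R$, and then apply $[\condL]$ (with target $\sigma$ and $R = 2H$) to obtain $\rho \leq \sigma$ with $M^{\rho}_{\delta} \leq C (2H)^{-|\delta|} M^{\sigma}_{\delta}$. Taking $\nu' = \rho$ — shrinking $\rho$ further if needed so that it lies below the index produced by $[\condwM]$, which only strengthens all the above bounds — and choosing $R = 2 H R_1$ in the $[\condwI]$ estimate, the powers of $R_1$, of $2H$, and the factor $H^{|\alpha|}$ telescope exactly: each summand reduces to $C\binom{\alpha}{\gamma} 2^{-|\alpha|} M^{\nu}_{\alpha}$, and $\sum_{\gamma \leq \alpha} \binom{\alpha}{\gamma} 2^{-|\alpha|} = 1$. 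Hence $|g^{(\alpha)}(x)| \jb{x}^k w^{\nu}(x) \leq C M^{\nu}_{\alpha}$, which is precisely membership of $g$ in $(L^\infty)^{[\M]}_{[\W_k]}$ at index $\nu$.

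I would present the argument in the Beurling case and note that the Roumieu case follows by the analogous reordering of the quantifiers in $[\condwI]$, $[\condL]$ and $[\condwM]$. \emph{The main obstacle} is the tension between the required polynomial decay and the ultradifferentiable structure: a compactly supported or merely slowly decaying cutoff is useless here, since multiplying by it would destroy the control of the derivatives, and a nonzero element of these spaces cannot have compact support; the band-limited multiplier $\theta$ resolves this because its derivatives grow only geometrically while it still decays like $\jb{\, \cdot \,}^{-k}$. The second delicate point is precisely the order of invocation — $[\condwI]$ first, so that $H$ depends only on $\nu$, then $[\condL]$ with $R = 2H$ — together with actually exploiting the gain $(2H)^{-|\delta|}$ furnished by $[\condL]$; this is what makes the geometric losses cancel against $\sum_{\gamma}\binom{\alpha}{\gamma}$ instead of leaving behind a spurious factor $(2H)^{|\alpha|}$.
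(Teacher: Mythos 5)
Your proposal is correct and follows essentially the same route as the paper: the paper also takes $g=(f*\psi)\,\widehat{\chi}$ with $\psi,\chi\in\mathcal{D}(\R^n)$, so that the convolution passes from $E$ to (weighted) $L^\infty$ without raising the differentiation order and the band-limited factor $\widehat{\chi}$ supplies the polynomial decay with geometric derivative bounds, the Leibniz sum being absorbed by $[\condwI]$ and $[\condL]$ exactly as you describe. The only cosmetic differences are that the paper routes the convolution estimate through the global bound $\|f\jb{\,\cdot\,}^{-(n+1)}\|_{L^1}\leq C\|f\|_E$ rather than your local one, and secures nontriviality by normalizing $(f*\psi)(0)=\widehat{\chi}(0)=1$ instead of a density argument.
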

	
	\begin{proof}
	We claim that there is $C >0$ such that 
		\begin{equation}
			\label{eqclaimineq}
			\|f \jb{\, \cdot \,}^{-(n+1)}\|_{L^1}\leq C\| f \|_E, \qquad f \in E.
		\end{equation}
	Before we prove this claim, let us show how it implies the result.
	Let $f \in E^{[{\M}]}_{[{\W}]} \backslash \{0\}$.
	The inequality \eqref{eqclaimineq} yields
		\begin{equation}
			\label{weightedL1}
			\forall \lambda >0 \, (\exists \lambda >0) \, : \,
			\sup_{\alpha \in \N^n} \frac{\| f^{(\alpha)} \jb{\, \cdot \,}^{-(n+1)} w^\lambda \|_{L^1}}{M^\lambda_\alpha} < \infty.
		\end{equation}
	Choose $\psi \in \mathcal{D}(\R^n)$ such that $\int_{\R^n} f(x) \psi(-x) dx =1$. Pick $\chi \in \mathcal{D}(\R^n)$ such that $\int_{\R^n} \chi (x)dx = 1$ and consider its Fourier transform $\widehat{\chi}(\xi) = \int_{\R^n} \chi(x)e^{-2\pi i \xi x} dx$.
	Set $g = (f \ast \psi) \widehat{\chi}$ and note that  $g(0) = 1$. 
		Let $k \in \N$ be arbitrary. There are $C_1, R > 0$ such that
			\[ \| \jb{\, \cdot \,}^{k + n + 1} \partial^{\beta} \widehat{\chi} \|_{L^{\infty}} \leq C_1 R^{|\beta|} , \qquad \beta \in \N^n . \]
		Suppose that $\lambda > \mu$ and $C_2,C_3,C_4 >0$ are such that: 
		$w^\lambda(x + y) \leq C_2 w^\mu(x)$ for all $x \in \R^n$, $y \in \supp \psi$;
		$\| f^{(\alpha)} \jb{\, \cdot \,}^{-(n+1)} w^\mu \|_{L^1} \leq C_3 M^\mu_{\alpha}$ for all $\alpha \in \N^n$; and
		$M^\mu_{\alpha} R^{|\beta|} \leq C_4 2^{-|\alpha + \beta|} M^\lambda_{\alpha + \beta}$ for all $\alpha, \beta \in \N^n$.
		Then,
		\begin{align*}
			|g^{(\alpha)}(x) \jb{x}^k w^\lambda(x)|
			&\leq \sum_{\beta \leq \alpha} {\alpha \choose \beta} \jb{x}^{-(n + 1)} w^\lambda(x) |(f^{(\alpha - \beta)} * \psi)(x)|  \jb{x}^{k + n + 1} | \partial^{\beta} \widehat{\chi}(x) | \\
			& \leq C_1C_2 2^{\frac{n + 1}{2}} \sum_{\beta \leq \alpha} {\alpha \choose \beta} R^{|\beta|} ([|f^{(\alpha - \beta)}| \jb{\, \cdot \,}^{-(n+1)} w^\mu] * [|\psi| \jb{\, \cdot \,}^{n+1}])(x)  \\
			& \leq C_1 C_2 C_3 2^{\frac{n + 1}{2}} \| \jb{\, \cdot \,}^{n+1} \psi \|_{L^\infty}  \sum_{\beta \leq \alpha} {\alpha \choose \beta} R^{|\beta|} M^{\mu}_{\alpha - \beta} \\
			& \leq C_1 C_2 C_3 C_4 2^{\frac{n + 1}{2}} \| \jb{\, \cdot \,}^{n+1} \psi \|_{L^\infty}  M^{\lambda}_\alpha.
		\end{align*}
By using \eqref{weightedL1}, $[\condL]$ and $[\condwI]$ for $\M$,  and $[\condwM]$ for $\W$, we find that $g \in \mathcal{S}^{[\M]}_{[\W_k], \infty}$.
	We now return to the claim \eqref{eqclaimineq}.
	Since $E \subseteq L^1_{\operatorname{loc}}(\R^n)$ continuously, there is $C >0$ such that 
		\[ \| f 1_{[0,1]^n} \|_{L^1} \leq C\|f\|_{E}, \qquad f \in E. \]
	Hence, for all $f \in E$
		\begin{align*}
			\|f \jb{\, \cdot \,}^{-(n+1)}\|_{L^1} &= \sum_{j \in \Z^n }\|f \jb{\, \cdot \,}^{-(n+1)}T_j1_{[0,1]^n}\|_{L^1}  \\
			&\leq (2(n+1))^{(n+1)/2} \sum_{j \in \Z^n} \jb{j}^{-(n+1)} \|(T_{-j}f)1_{[0,1]^n}\|_{L^1} \\
			&\leq CC_0(2(n+1))^{(n+1)/2} \sum_{j \in \Z^n} \jb{j}^{-(n+1)} \|f\|_{E},
		\end{align*}
	where $C_0$ is the constant from (A.2). This shows the claim.	
	\end{proof}

\subsection{Proof of Theorem \ref{t:InclusionCharFixedWeightMatrix}} 
We divide the proof into several steps. Given a weight function $w$, we define $E_{d,w}$ as the Banach space consisting of all $c = (c_j)_{j \in \Z^n} \in \C^{\Z^n}$ such that $(c_jw(j))_{j \in \Z^n} \in E_d$ (see Definition \ref{def:Ed}) and endow it with the norm
	\[ \|c\|_{E_{d,w}} = \|(c_jw(j))_{j \in \Z^n}\|_{E_d}. \]
We set
	\[ E_{d,(\W)} = \varprojlim_{\lambda \to 0^{+}} E_{d,w^{\lambda}} , \qquad E_{d,\{\W\}} = \varinjlim_{\lambda \to \infty} E_{d,w^{\lambda}}. \]
Then, $E_{d,(\W)}$ is a Fr\'{e}chet space and $E_{d,\{\W\}}$ is an $(LB)$-space.

\begin{proposition}
	\label{t:InclusionWeightedSpaces}
	Assume that  $\W$ and $\V$ satisfy $[\condwM]$. Then, $E_{d,[\W]} \subseteq E_{d,[\V]}$ as sets  if and only if $\W [\subseteq] \V$.
\end{proposition}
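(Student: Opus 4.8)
The ``if'' direction is the routine one, so I would dispatch it first. The key observation is that $E_d$ is itself a solid sequence space: if $|d_j|\le|c_j|$ for all $j$ and $c\in E_d$, then $\sum_j|d_j|T_j1_{[0,1]^n}\le\sum_j|c_j|T_j1_{[0,1]^n}$ pointwise, so solidity of $E$ gives $d\in E_d$ with $\|d\|_{E_d}\le\|c\|_{E_d}$. Granting this, assume $\W[\subseteq]\V$. In the Beurling case, given $\lambda$ choose $\mu$ with $v^\lambda=O(w^\mu)$; then for $c\in E_{d,(\W)}$ the bound $|c_jv^\lambda(j)|\le C|c_jw^\mu(j)|$ together with solidity yields $\|c\|_{E_{d,v^\lambda}}\le C\|c\|_{E_{d,w^\mu}}$, whence $E_{d,(\W)}\subseteq E_{d,(\V)}$ continuously; the Roumieu case is the same with the roles of the quantifiers swapped. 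Note that only the values of $v^\lambda,w^\mu$ on the lattice $\Z^n$ are used here.

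For the converse I would isolate a discrete intermediate statement: the analogue of $\W[\subseteq]\V$ in which $v^\lambda=O(w^\mu)$ is replaced by the comparison of $v^\lambda$ and $w^\mu$ on the lattice $\Z^n$ only. That $\W[\subseteq]\V$ implies this discrete relation is trivial; the reverse implication is exactly where $[\condwM]$ is needed. Given $x\in\R^n$, let $j\in\Z^n$ be a nearest lattice point, so $|x-j|\le\sqrt{n}/2$; iterating $[\condwM]$ for $\V$ finitely many times bounds $v^\lambda(x)$ by a constant times $v^{\lambda'}(j)$ for a shifted index $\lambda'$, the discrete estimate bounds $v^{\lambda'}(j)$ by $w^{\mu'}(j)$, and iterating $[\condwM]$ for $\W$ bounds $w^{\mu'}(j)$ by $w^{\mu''}(x)$; chaining gives $v^\lambda(x)\le Cw^{\mu''}(x)$. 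I expect this passage between the lattice and $\R^n$, and in particular keeping the finitely many index shifts compatible with the $(\:)$/$\{\:\}$ quantifier order, to be the main technical obstacle.

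It then remains to deduce the discrete relation from the bare set inclusion $E_{d,[\W]}\subseteq E_{d,[\V]}$, and here I would argue by contraposition, building an explicit offending sequence out of the embeddings $\ell^1\subseteq E_d\subseteq\ell^\infty$ of Remark \ref{remark-inclusion}; this seems cleaner than invoking a closed graph theorem together with regularity of the $(LB)$-space. For the Roumieu case, suppose the discrete relation fails for some $\mu$, so that $\sup_{j\in\Z^n}v^\lambda(j)/w^\mu(j)=\infty$ for every $\lambda$. Fixing a cofinal sequence $\lambda_k\uparrow\infty$ and using that each ratio is finite at every single lattice point (so the supremum remains infinite outside any bounded set), I would pick distinct $j_k$ with $|j_k|\to\infty$ and $v^{\lambda_k}(j_k)\ge k2^kw^\mu(j_k)$, and set $c_{j_k}=(2^kw^\mu(j_k))^{-1}$ and $c_j=0$ otherwise. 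Then $(c_jw^\mu(j))_j\in\ell^1\subseteq E_d$, so $c\in E_{d,w^\mu}\subseteq E_{d,\{\W\}}$; but for $k\ge m$ the monotonicity $v^{\lambda_m}\ge v^{\lambda_k}$ gives $c_{j_k}v^{\lambda_m}(j_k)\ge c_{j_k}v^{\lambda_k}(j_k)\ge k$, so $(c_jv^{\lambda_m}(j))_j\notin\ell^\infty\supseteq E_d$ for every $m$ and hence $c\notin E_{d,\{\V\}}$, contradicting the inclusion.

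Finally I would treat the Beurling case as the mirror image: if the discrete relation fails for some $\lambda$, then $\sup_jv^\lambda(j)/w^\mu(j)=\infty$ for every $\mu$, and choosing $\mu_k\downarrow0$ and distinct $j_k\to\infty$ with $v^\lambda(j_k)\ge k2^kw^{\mu_k}(j_k)$, the sequence $c_{j_k}=(2^kw^{\mu_k}(j_k))^{-1}$ satisfies $\sum_kc_{j_k}w^{\mu_{k_0}}(j_k)<\infty$ for every $k_0$ (because $w^{\mu_k}\ge w^{\mu_{k_0}}$ for $k\ge k_0$), so $c\in E_{d,(\W)}$, while $c_{j_k}v^\lambda(j_k)\ge k$ forces $c\notin E_{d,v^\lambda}\supseteq E_{d,(\V)}$. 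Chaining the discrete relation with the two reductions of the previous paragraphs then yields the stated equivalence.
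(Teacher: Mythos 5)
Your argument is correct, but the hard direction takes a genuinely different route from the paper's. The paper first upgrades the set inclusion $E_{d,[\W]} \subseteq E_{d,[\V]}$ to a continuous one via De Wilde's closed graph theorem (combined with the Grothendieck factorization theorem in the Roumieu case), and then evaluates the resulting norm inequality on the unit sequences $c^{(k)}=(\delta_{k,j})_{j\in\Z^n}$, which yields precisely your discrete comparison $v^{\lambda}(k)\leq C' w^{\mu}(k)$ on $\Z^{n}$; the final passage from the lattice to $\R^{n}$ via $[\condwM]$ is common to both arguments, and the quantifier bookkeeping you flag as the main obstacle is in fact harmless (the chain ``given $\lambda$ produce $\lambda'$, then $\mu'$, then $\mu''$'' composes correctly in both the Beurling and the Roumieu order). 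What you do differently is to replace the functional-analytic step by an explicit gliding-hump construction: assuming the discrete comparison fails, you use $\ell^{1}\subseteq E_{d}\subseteq\ell^{\infty}$ and the solidity of $E_{d}$ to produce one sequence lying in $E_{d,[\W]}$ but in no $E_{d,v^{\lambda}}$, and I checked that your choices $c_{j_k}=(2^{k}w^{\mu}(j_k))^{-1}$, resp.\ $(2^{k}w^{\mu_k}(j_k))^{-1}$, do accomplish this in the Roumieu, resp.\ Beurling, case. Your approach is more elementary and self-contained: it does not use that $E_{d,(\W)}$ is a Fr\'echet space and $E_{d,\{\W\}}$ an $(LB)$-space, which is what legitimizes De Wilde's theorem and the factorization argument. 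The price is length, and the fact that you essentially re-prove by hand the uniform estimate that the closed graph and factorization theorems deliver in one line; the paper's route also hands you the continuity of the inclusion as a by-product, which your version would have to re-derive from statement (i) if it were needed.
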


\begin{proof}
Suppose that  $\W [\subseteq] \V$. Since $E$ is solid, we obviously have that $E_{d,[\W]} \subseteq E_{d,[\V]}$ 
Conversely, suppose that $E_{d,[\W]} \subseteq E_{d,[\V]}$. De Wilde's closed graph theorem yields that the inclusion $E_{d,[\W]} \subseteq E_{d,[\V]}$ holds continuously. 
Consequently (use the Grothendieck factorization theorem in the Roumieu case), for every $\lambda > 0$ there are $\mu > 0$ and $C > 0$ (for every $\mu > 0$ there are $\lambda> 0$ and $C>0$) such that
	\[ \| c \|_{E_{d,v^\lambda}} \leq C \| c \|_{E_{d,w^{\mu}}} , \qquad \forall  c \in E_{d,(\W)} \, (\forall c \in E_{d,w^\mu}). \]
Applying this inequality to the sequences $c^{(k)} = (\delta_{k,j})_{j \in \Z^n}$, $k \in \Z^n$, we obtain that 
	\[ \frac{\| 1_{[0,1]^n}\|_E}{C_0} v^\lambda(k) \leq \| c^{(k)}  \|_{E_{d,v^\lambda}} \leq  C \| c^{(k)} \|_{E_{d,w^{\mu}}} \leq 	CC_0\| 1_{[0,1]^n}\|_E w^\mu(k), \qquad k \in \Z^n, \]
where $C_0$ is the constant from condition (A.2).
We have thus shown that
	\begin{equation}
		\label{eq:InclusionLattice}
		\forall \lambda \in \R_+ \, \exists \mu \in \R_+ \, ( \forall \mu \in \R_+ \, \exists \lambda \in \R_+) \, \exists C > 0 \, \forall k \in \Z^n \, : \, v^\lambda(k) \leq C w^\mu(k) . 
	\end{equation}
Let us now prove how this entails $\W [\subseteq] \V$.
Suppose that $\lambda, \lambda', \mu,\mu' \in \R_+$ and $C_1,C_2>0$ are such that $v^\lambda(x + y) \leq C_1 v^{\lambda'}(x)$ and $w^{\mu'}(x + y) \leq C_2 w^\mu(x)$ for all $x \in \R^n$,$y \in [-1, 1]^n$.
Moreover, assume that there is $C >0$ such that $v^{\lambda'}(k) \leq C w^{\mu'}(k)$ for all $k \in \Z^n$.
For every $x \in \R^n$   we choose $k_x \in \Z^n$ such that $x - k_x \in [0, 1]^n$.
We then find, for all $x \in \R^n$,
	\[ v^\lambda(x) \leq C_1 v^{\lambda'}(k_x) \leq C C_1 w^{\mu'}(k_x) \leq C C_1 C_2 w^{\mu}(x) . \]
Hence, as $\W$ and $\V$ both satisfy $[\condwM]$, it follows from \eqref{eq:InclusionLattice} that $\W [\subseteq] \V$.

\end{proof}

\begin{lemma}
	\label{l:Synthesis} 
	Assume that $\W$ satisfies $[\condM]$. Let $\psi \in \mathcal{S}^{[\M]}_{[\W_{n+1}], \infty}$. Then, for all $c \in  E_{d,[{\W}]}$,
		\[ R_{\psi}(c) =  \sum_{j \in \Z^{n}} c_{j} T_{j} \psi \in  E^{[\M]}_{[\W]}. \]
\end{lemma}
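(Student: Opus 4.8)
The plan is to estimate each derivative of $R_\psi(c)$ by reducing, via the submultiplicativity condition $[\condM]$, to the synthesis estimate of Lemma \ref{l:EdToE}. As a preliminary step I would verify that the series actually defines a smooth function and may be differentiated term by term. Since $c \in E_{d,[\W]}$ we have $(c_j w^\lambda(j))_j \in E_d \subseteq \ell^\infty$ (Remark \ref{remark-inclusion}) for a suitable index, and as $w^\lambda \geq 1$ the sequence $c$ is bounded. Moreover $\psi \in \mathcal{S}^{[\M]}_{[\W_{n+1}], \infty}$ yields, for each fixed $\alpha$, a bound $|\psi^{(\alpha)}(x)| \leq C_\alpha \jb{x}^{-(n+1)}$ (again using $w^\lambda \geq 1$). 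Hence $\sum_{j} |c_j|\,|\psi^{(\alpha)}(x-j)| \lesssim \sum_j \jb{x-j}^{-(n+1)}$ converges absolutely and uniformly on compact sets, so $R_\psi(c) \in C^\infty(\R^n)$ with $(R_\psi(c))^{(\alpha)} = \sum_{j \in \Z^n} c_j T_j \psi^{(\alpha)}$.

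For the main estimate in the Beurling case, I would fix $\lambda > 0$ and use $(\condM)$ to pick $\mu,\nu$ and $C_1 > 0$ with $w^\lambda(x+y) \leq C_1 w^\mu(x) w^\nu(y)$; shrinking $\mu,\nu$ if necessary and invoking the monotonicity of $\W$, we may arrange that $\mu,\nu \leq \lambda$. Splitting $x = (x-j)+j$ and estimating termwise gives the pointwise bound
\[ |(R_\psi(c))^{(\alpha)}(x)|\, w^\lambda(x) \leq C_1 \sum_{j \in \Z^n} \bigl(|c_j| w^\nu(j)\bigr)\, T_j\bigl(|\psi^{(\alpha)}| w^\mu\bigr)(x), \]
whose right-hand side is $C_1 R_\phi(\tilde c)$ with $\tilde c = (|c_j| w^\nu(j))_j$ and $\phi = |\psi^{(\alpha)}| w^\mu \in C_{\jb{\, \cdot \,}^{n+1}}$. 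Applying Lemma \ref{l:EdToE} and the solidity of $E$ then yields
\[ \|(R_\psi(c))^{(\alpha)} w^\lambda\|_E \leq C_1 C_0 C\, \|c\|_{E_{d,w^\nu}}\, \|\psi^{(\alpha)} w^\mu \jb{\, \cdot \,}^{n+1}\|_{L^\infty} \leq C_1 C_0 C\, \|c\|_{E_{d,w^\nu}}\, \|\psi\|_{\mathcal{S}^{(\M)}_{(\W_{n+1}),\infty}} M^\mu_\alpha, \]
where $C_0,C$ are the constants of Lemma \ref{l:EdToE}. Since $\mu \leq \lambda$ gives $M^\mu_\alpha \leq M^\lambda_\alpha$, dividing by $M^\lambda_\alpha$ and taking the supremum over $\alpha$ shows the $E^{M^\lambda}_{w^\lambda}$-norm of $R_\psi(c)$ is finite; as $\lambda$ was arbitrary, $R_\psi(c) \in E^{(\M)}_{(\W)}$.

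In the Roumieu case I would instead start from the indices $\mu_0$ (for which the defining seminorm of $\psi$ is finite) and $\nu_0$ (for which $\|c\|_{E_{d,w^{\nu_0}}} < \infty$), apply $\{\condM\}$ to the pair $\mu_0,\nu_0$ to obtain some $\lambda$, and then enlarge $\lambda$ to $\max(\lambda,\mu_0)$ — legitimate since $w^{\lambda'} \leq w^{\lambda}$ for $\lambda' \geq \lambda$ keeps the split inequality valid and, by solidity, only decreases the $E$-norm. With $\lambda \geq \mu_0$ we get $M^{\mu_0}_\alpha \leq M^\lambda_\alpha$, and the same chain of inequalities places $R_\psi(c)$ in $E^{M^\lambda}_{w^\lambda} \subseteq E^{\{\M\}}_{\{\W\}}$. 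I expect the only genuinely delicate point to be precisely this index bookkeeping: the factorization furnished by $[\condM]$ controls $w^\lambda$ through $w^\mu, w^\nu$ at a priori uncontrolled indices, and one must exploit the monotonicity of $\W$ (to move the weight indices) together with that of $\M$ (to compare $M^\mu$ with $M^\lambda$) in order to land the estimate at the correct index in each of the two cases.
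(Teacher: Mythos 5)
Your argument is correct and follows essentially the same route as the paper's proof: split the weight via $[\operatorname{M}]$ so that one factor attaches to the sequence and the other to $\psi^{(\alpha)}$, then apply Lemma \ref{l:EdToE} together with solidity of $E$. The paper leaves the termwise-differentiation check and the index bookkeeping implicit, but your more explicit treatment of both is accurate.
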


\begin{proof}
Let $\nu > 0$ be such that $\psi \in \mathcal{S}^{M^\nu}_{\jb{\,\cdot \,}^{n+1}w^\nu,\infty}$; this means that $\nu$  is fixed in the Roumieu case but can be taken as small as needed in the Beurling case. As $\W$ satisfies $[\condM]$, it holds that for every $\lambda > 0$ there are $\mu, \nu > 0$ and $C > 0$ (for every $\mu > 0$ there are $\lambda > 0$ and $C > 0$) such that $w^\lambda(x+y) \leq Cw^\mu(x)w^\nu(y)$ for all $x,y \in \R^n$.
For every $c \in E_{d, w^\mu}$ it holds that for all $\alpha \in \N^n$
	\[ w^{\lambda}  \sum_{j \in \Z^{n}} |c_{j} T_{j} \psi^{(\alpha)}| \leq C \sum_{j \in \Z^{n}} |c_{j} w^{\mu}(j) T_{j} (\psi^{(\alpha)} w^{\nu})| \leq C R_{|\psi^{(\alpha)}|w^\nu} ( (|c_{j}| w^{\mu}(j))_{j \in \Z^n}) . \]
Since $|\psi^{(\alpha)}|w^\nu \in C_{\jb{\, \cdot \,}^{n + 1}}$ for all $\alpha \in \N^n$, the result now follows from Lemma \ref{l:EdToE} and the fact that $E$ is solid.
\end{proof}

	\begin{lemma}
		\label{l:EvaluationMap}
	 Assume that $\V$ satisfies $[\condwM]$. For all $f \in  E^{[\NN]}_{[\V]}$ it holds that
			\[  S(f) =  (f(j))_{j \in \Z^n} \in  E_{d, [\V]}. \]
	\end{lemma}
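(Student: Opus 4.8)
The plan is to reduce the claim to a pointwise domination of the sequence $(f(j)v^\lambda(j))_{j \in \Z^n}$ by a finite sum of sequences that manifestly lie in $E_d$, and then to invoke Lemma \ref{l:ConvEval} together with the solidity of $E_d$. Let me first record that $E_d$ is solid: if $c = (c_j)_j$ and $d = (d_j)_j$ satisfy $|c_j| \leq |d_j|$ for all $j$ and $d \in E_d$, then $\sum_j |c_j| T_j 1_{[0,1]^n} \leq \sum_j |d_j| T_j 1_{[0,1]^n}$ pointwise almost everywhere, so the solidity of $E$ gives $c \in E_d$ with $\|c\|_{E_d} \leq \|d\|_{E_d}$. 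Since $v^\lambda \geq 1$, we have $|f(j)v^\lambda(j)| = |f(j)|v^\lambda(j)$, and it therefore suffices to dominate $(|f(j)|v^\lambda(j))_j$ by sequences of the form $(h * \check\chi(j))_j$ with $h \in E$, $h \geq 0$, and $\chi \in C_c(\R^n)$, for a suitable index $\lambda$ (arbitrary in the Beurling case; one well-chosen value in the Roumieu case).

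Next I would represent point values through derivatives. Fix $\Theta \in \mathcal{D}(\R^n)$ of the form $\Theta(t) = \prod_{i=1}^n \theta(t_i)$, where $\theta \in \mathcal{D}(\R)$ satisfies $\theta(0) = 1$ and $\supp \theta \subseteq (-1,1)$. Applying the fundamental theorem of calculus in each variable over the cube $j + [0,1]^n$ to $f\, T_j\Theta$, the telescoping boundary terms vanish except at the corner $t = j$, yielding
\[ f(j) = (-1)^n \int_{j+[0,1]^n} \partial_1 \cdots \partial_n \big[ f(t)\, \Theta(t-j) \big]\, dt. \]
Expanding the derivative by the Leibniz rule produces a finite sum over $\alpha \in \{0,1\}^n$ of terms $f^{(\alpha)}(t)$ times a fixed derivative of $\Theta$ evaluated at $t-j$. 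Since these derivatives of $\Theta$ are bounded and supported in $(-1,1)^n$, there are a constant $C' > 0$ and a nonnegative $\chi \in C_c(\R^n)$ with $\supp \chi \subseteq B(0,\sqrt{n})$ such that
\[ |f(j)| \leq C' \sum_{\alpha \in \{0,1\}^n} \int_{\R^n} |f^{(\alpha)}(t)|\, \chi(t-j)\, dt = C' \sum_{\alpha \in \{0,1\}^n} \big( |f^{(\alpha)}| * \check\chi \big)(j). \]

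Finally I would insert the weight using $[\condwM]$ and conclude. Iterating $[\condwM]$ finitely many times so as to cover the fixed set $\supp \chi$, I obtain, in the Beurling case for every $\lambda$ an index $\mu$ (in the Roumieu case for every $\mu$ an index $\lambda$) and a constant $C > 0$ with $v^\lambda(j) \leq C\, v^\mu(t)$ whenever $t \in j + \supp \chi$. Multiplying the previous display by $v^\lambda(j)$ and moving the weight inside the integral gives
\[ |f(j)|\, v^\lambda(j) \leq CC' \sum_{\alpha \in \{0,1\}^n} \big( |f^{(\alpha)}|\, v^\mu * \check\chi \big)(j). \]
Because $f \in E^{[\NN]}_{[\V]}$, for the matching index $\mu$ (all $\mu$ in the Beurling case, the selected $\mu$ in the Roumieu case) we have $f^{(\alpha)} v^\mu \in E$, hence $|f^{(\alpha)}| v^\mu \in E$ by solidity, for each of the finitely many $\alpha \in \{0,1\}^n$. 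Lemma \ref{l:ConvEval} then shows that each sequence $(|f^{(\alpha)}| v^\mu * \check\chi(j))_j$ lies in $E_d$, so their finite sum does as well, and the solidity of $E_d$ yields $S(f) \in E_{d,v^\lambda}$, whence $S(f) \in E_{d,[\V]}$ with the correct index matching in both cases. The main obstacle is the combined step behind the pointwise bound: representing the sampled value $f(j)$ as an integral of derivatives of $f$ against a compactly supported kernel, and then using $[\condwM]$ to transfer the weight $v^\lambda(j)$ from the sampling point onto the integration variable, so that the right-hand side becomes a genuine $E$-convolution to which Lemma \ref{l:ConvEval} applies.
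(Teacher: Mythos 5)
Your proof is correct, but it reaches the key pointwise bound by a different device than the paper. The paper uses the Schwartz parametrix method: with $\chi F_l$ a truncated fundamental solution of $\Delta^l$ (for $l$ large enough that $\chi F_l \in C_c(\R^n)$), it writes $f = (\Delta^l f) * (\chi F_l) - f * \varphi_l$ with $\varphi_l \in \mathcal{D}(B(0,1))$, multiplies by $v^\lambda$, moves the weight across the unit ball via $[\condwM]$, and then applies Lemma \ref{l:ConvEval} and solidity -- exactly the same final two steps as yours. You instead obtain the representation $f(j) = (-1)^n \int_{j+[0,1]^n} \partial_1 \cdots \partial_n [f\, T_j\Theta]$ by iterating the fundamental theorem of calculus against a tensor-product cutoff with $\theta(0)=1$ and $\theta(1)=0$, which after Leibniz yields a domination of $|f(j)|$ by the convolutions $(|f^{(\alpha)}| * \check\chi)(j)$ over the $2^n$ multi-indices $\alpha \in \{0,1\}^n$. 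Your route is more elementary (it needs no fundamental solutions and no regularity discussion for $F_l$) and uses only first-order derivatives in each variable, at the cost of producing a representation only of the sampled values rather than a global convolution identity; the parametrix version packages everything into two convolution terms and is the form the authors reuse elsewhere. All the surrounding bookkeeping in your write-up -- the solidity of $E_d$, the finite iteration of $[\condwM]$ to cover $\supp\chi$, and the Beurling/Roumieu index matching -- is handled correctly.
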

	
	\begin{proof}
		We employ the Schwartz parametrix method. 
		Let $\chi \in \mathcal{D}(B(0,1))$ be such that $\chi \equiv 1$ on $B(0,1/2)$. For $l \in \N \setminus \{0\}$ we denote by $F_{l} \in L^1_{\operatorname{loc}}(\R^n)$ the fundamental solution of $\Delta^{l}$, where $\Delta$ is the Laplacian. Then, $\Delta^{l} (\chi F_{l}) - \delta = \varphi_{l}  \in \mathcal{D}(B(0,1))$. Note  that
			\[ f = (\Delta^{l} f) * (\chi F_{l}) - f * \varphi_l, \qquad f \in C^\infty(\R^n). \]
		Fix a sufficiently large $l$ such that $\chi F_{l} \in C_c(\R^n)$. For every $\lambda > 0$ there are $\mu > 0$ and $C > 0$ (for every $\mu > 0$ there are $\lambda > 0$ and $C > 0$) such that $v^{\lambda}(x + y) \leq C v^{\mu}(x)$ for all $x \in \R^n$, $y \in B(0,1)$. Hence, for all $f \in E^{M_\mu}_{v^\mu}$
			\[ |f v^{\lambda}| \leq C \left( |(\Delta^{l} f) v^{\mu}| * |\chi F_{l}| + |f v^{\mu}| * |\varphi_l| \right). \]
		The result now follows from Lemma \ref{l:ConvEval} and the fact that $E$ is solid
	\end{proof}

	\begin{lemma}
		\label{l:SpecificWindow1}
		 Assume that $\M$ satisfies $[\condL]$ and $[\condwI]$,  and $\W$ satisfies $[\condwM]$. Suppose that  $E^{[\M]}_{[\W]}  \neq \{0 \}$. Then, there exists  $\psi \in \mathcal{S}^{[\M]}_{[\W_{n+1}], \infty}$ such that $\psi(j) = \delta_{j, 0}$ for all $j \in \Z^{n}$.
	\end{lemma}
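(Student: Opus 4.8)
The plan is to obtain $\psi$ by repeating the construction from the proof of Lemma \ref{lemma-nt} (with $k = n+1$), but making a special choice of the mollifier so that its Fourier transform interpolates the Kronecker delta on the lattice $\Z^n$. The key observation is that if $\chi \in \D(\R^n)$ is a smooth partition of unity generator for $\Z^n$, i.e.\ $\sum_{k \in \Z^n} \chi(\, \cdot \, + k) \equiv 1$, then Poisson summation gives $\sum_{j \in \Z^n} \widehat{\chi}(j) e^{2\pi i j \cdot x} = \sum_{k \in \Z^n} \chi(x+k) \equiv 1$, so that $\widehat{\chi}(j) = \delta_{j,0}$ for all $j \in \Z^n$ by uniqueness of Fourier coefficients. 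Such a $\chi$ exists: starting from $\mu \in \D(\R)$ with $\sum_{k \in \Z} \mu(\, \cdot \, - k) \equiv 1$, one may take $\chi = \mu \otimes \cdots \otimes \mu$.

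Concretely, I would proceed as follows. Pick $f \in E^{[\M]}_{[\W]} \setminus \{0\}$ and, exactly as in the proof of Lemma \ref{lemma-nt}, choose a test function $\theta \in \D(\R^n)$ with $\int_{\R^n} f(x) \theta(-x)\, dx = 1$, so that $(f * \theta)(0) = 1$. With $\chi$ chosen as above, set $\psi = (f * \theta) \, \widehat{\chi}$. The proof of Lemma \ref{lemma-nt}, which uses that $\M$ satisfies $[\condL]$ and $[\condwI]$ and that $\W$ satisfies $[\condwM]$, shows verbatim that this function lies in $\mathcal{S}^{[\M]}_{[\W_{n+1}], \infty}$ (this is the case $k = n+1$, and the factor $\widehat{\chi}$ plays exactly the role of the band-limited factor there). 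Finally, since $f * \theta$ is continuous and $\widehat{\chi}(j) = \delta_{j,0}$, we get for every $j \in \Z^n$ that $\psi(j) = (f * \theta)(j)\, \widehat{\chi}(j) = (f * \theta)(j)\, \delta_{j,0}$, which equals $0$ for $j \neq 0$ and equals $(f * \theta)(0)\, \widehat{\chi}(0) = 1 \cdot 1 = 1$ for $j = 0$; that is, $\psi(j) = \delta_{j,0}$, as required.

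The only step demanding genuine work is the membership $\psi \in \mathcal{S}^{[\M]}_{[\W_{n+1}], \infty}$, and this is not new: it is precisely the estimate carried out in the proof of Lemma \ref{lemma-nt}, where one differentiates $(f * \theta)\widehat{\chi}$ by Leibniz' rule, absorbs the factors produced by the (bounded, exponential-type) derivatives of the entire function $\widehat{\chi}$ via $[\condwI]$, and handles the resulting geometric factors via $[\condL]$, the weight-function side being immediate because $\widehat{\chi}$ is rapidly decreasing. Thus the genuinely new ingredient of this lemma is light: it is the Poisson-summation choice of $\chi$, which upgrades the mere nonvanishing of $\psi(0)$ obtained in Lemma \ref{lemma-nt} to the full interpolation property $\psi(j) = \delta_{j,0}$. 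I expect the main conceptual obstacle to be simply recognizing that Lemma \ref{lemma-nt} already supplies the hard analytic estimate, so that here one needs only to specialize the mollifier.
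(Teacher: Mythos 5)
Your proof is correct and follows essentially the same route as the paper: the paper multiplies a translate of the function $\varphi \in \mathcal{S}^{[\M]}_{[\W_{n+1}],\infty}$ supplied by Lemma \ref{lemma-nt} (normalized so that $\varphi(0)=1$) by $\chi(x)=\int_{[0,1]^n}e^{-2\pi i\xi x}\,d\xi$, i.e.\ the Fourier transform of $1_{[0,1]^n}$, which interpolates $\delta_{j,0}$ on $\Z^n$ precisely because $1_{[0,1]^n}$ periodizes to $1$ --- the same mechanism as your Poisson-summation choice. The only cosmetic difference is that you fold the interpolating multiplier directly into the construction of Lemma \ref{lemma-nt} (using a smooth partition-of-unity generator, whose rapidly decreasing Fourier transform then also handles the decay), whereas the paper first extracts $\varphi$ and then uses translation invariance to arrange $\varphi(0)=1$.
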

	
	\begin{proof}
	By Lemma \ref{lemma-nt}  there is $\varphi \in \mathcal{S}^{[\M]}_{[\W_{n+1}], \infty} \setminus \{0\}$. As the space $\S^{[\M]}_{[\W_{n+1}], \infty}$ is translation-invariant, we may assume that $\varphi(0) = 1$. Let $\chi(x) = \int_{[0, 1]^{n}} e^{-2 \pi i \xi x} d\xi$. Then, $\psi = \varphi \chi$ satisfies  $\psi(j) = \delta_{j, 0}$ for all $j \in \Z^{n}$. We now show that $\psi \in \mathcal{S}^{[\M]}_{[\W_{n+1}], \infty}$.
	Let $\mu > 0$ be such that $\varphi \in \mathcal{S}^{M^\mu}_{\jb{\,\cdot \,}^{n+1}w^\mu,\infty}$ ; this means that $\mu$ is fixed in the Roumieu case but can be taken arbitrarily small in the Beurling case.
	Since   $\M$ satisfies $[\condL]$ and $[\condwI]$,  for every $\lambda > 0$ there are $\mu < \lambda$ and $C >0$ (there are $\lambda > \mu$ and $C > 0$) such that $M^\mu_{\alpha} \cdot (2 \pi)^{|\beta|} \leq C 2^{-|\alpha + \beta|} M^\lambda_{\alpha + \beta}$ for any $\alpha, \beta \in \N^n$.
	Then, for every $\alpha \in \N^n$ and $x \in \R^n$
		\begin{align*} 
			|\psi^{(\alpha)}(x)| \jb{x}^{n + 1}  w^\lambda(x)
			&\leq \sum_{\beta \leq \alpha} {\alpha \choose \beta} |\varphi^{(\alpha - \beta)}(x)|  \jb{x}^{n + 1} w^\mu(x) (2 \pi)^{|\beta|} \\
			&\leq \|\varphi\|_{L^\infty, M^\mu, \jb{ \, \cdot \,}^{n + 1} w^\mu} \sum_{\beta \leq \alpha} {\alpha \choose \beta} M^{\mu}_{\alpha - \beta} (2 \pi)^{|\beta|} \\
			&\leq C \|\varphi\|_{L^\infty, M^\mu, \jb{ \, \cdot \,}^{n + 1} w^\mu} M^{\lambda}_{\alpha} . 
		\end{align*}
	Hence, $\psi \in \mathcal{S}^{[\M]}_{[\W_{n+1}], \infty}$.
	
	\end{proof}

	\begin{proof}[Proof of Theorem \ref{t:InclusionCharFixedWeightMatrix}]
		Choose $\psi$ as in Lemma \ref{l:SpecificWindow1}.
		 Since $E^{[\M]}_{[\W]} \subseteq E^{[\NN]}_{[\V]}$,  Lemmas \ref{l:Synthesis} and \ref{l:EvaluationMap} yield that $c = S(R_{\psi}(c)) \in E_{d, [\V]}$ for all $c \in E_{d, [\W]}$. 
		 Thus, $E_{d, [\W]} \subseteq E_{d, [\V]}$. The result now follows from Proposition \ref{t:InclusionWeightedSpaces}.
	\end{proof}

\subsection{Proof of Theorem \ref{t:InclusionCharFixedWeightFuncSystem}} 
We again divide the proof into several steps. 

	\begin{lemma}
		\label{l:ContInclusionTildeSp}
		Assume that  $\M$ and $\NN$ satisfy $[\condL]$ and $[\condwI]$. Let $k \in \N$ be arbitrary. If $E^{[\M]}_{[\W]} \subseteq E^{[\NN]}_{[\V]}$ as sets, then $E^{[\M]}_{[\W_{2k}]} \subseteq E^{[\NN]}_{[\V_{2k}]}$ as sets. 
	\end{lemma}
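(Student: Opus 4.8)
The plan is to reduce the inclusion to two one-sided mapping properties of multiplication by the smooth polynomial weight $\jb{\,\cdot\,}^{2k} = (1+|\cdot|^2)^{k}$ and of its reciprocal $\jb{\,\cdot\,}^{-2k}$, which are mutually inverse operators on $C^{\infty}(\R^{n})$. Concretely, I would prove: (A) multiplication by $\jb{\,\cdot\,}^{2k}$ maps $E^{[\M]}_{[\W_{2k}]}$ into $E^{[\M]}_{[\W]}$, and (B) multiplication by $\jb{\,\cdot\,}^{-2k}$ maps $E^{[\NN]}_{[\V]}$ into $E^{[\NN]}_{[\V_{2k}]}$. Granting (A) and (B), the lemma is immediate: for $f \in E^{[\M]}_{[\W_{2k}]}$ we have $\jb{\,\cdot\,}^{2k} f \in E^{[\M]}_{[\W]} \subseteq E^{[\NN]}_{[\V]}$ by hypothesis, and then $f = \jb{\,\cdot\,}^{-2k}(\jb{\,\cdot\,}^{2k} f) \in E^{[\NN]}_{[\V_{2k}]}$ by (B).

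Both steps rest on the Leibniz rule together with the elementary pointwise bounds $|\partial^{\beta} \jb{\,\cdot\,}^{2k}| \leq C_{\beta} \jb{\,\cdot\,}^{2k}$ (with $\partial^{\beta}\jb{\,\cdot\,}^{2k} = 0$ for $|\beta| > 2k$) and $\jb{\,\cdot\,}^{2k}|\partial^{\beta}\jb{\,\cdot\,}^{-2k}| \leq C_{\beta}\jb{\,\cdot\,}^{-|\beta|} \leq C_{\beta}$. For (A), Leibniz gives $|(\jb{\,\cdot\,}^{2k} f)^{(\alpha)}| w^{\lambda} \leq \jb{\,\cdot\,}^{2k} w^{\lambda} \sum_{|\beta| \leq 2k} \binom{\alpha}{\beta} |f^{(\alpha - \beta)}|$, a \emph{finite} sum whose summands lie in $E$ since $f \in E^{[\M]}_{[\W_{2k}]}$. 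Taking $E$-norms and using solidity, the only thing left to verify is a bound of the form $\sum_{|\beta| \leq 2k} \binom{\alpha}{\beta} M^{\mu}_{\alpha - \beta} \leq C M^{\lambda}_{\alpha}$ for a suitable auxiliary index $\mu$; this follows from the shift condition $[\condwI]$ for $\M$ (applied with the roles $(\alpha, \beta) \mapsto (\alpha - \beta, \beta)$), the polynomial factor $\binom{\alpha}{\beta} \leq |\alpha|^{2k}$ and the accompanying geometric factor being absorbed by $[\condL]$ for $\M$. Since the $\beta$-sum is finite, (A) is the straightforward direction.

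The crux is (B). For $g \in E^{[\NN]}_{[\V]}$ and $f = \jb{\,\cdot\,}^{-2k} g$, Leibniz produces the \emph{infinite} expansion $\jb{\,\cdot\,}^{2k} f^{(\alpha)} = \sum_{\beta \leq \alpha} \binom{\alpha}{\beta} \jb{\,\cdot\,}^{2k}(\partial^{\beta}\jb{\,\cdot\,}^{-2k}) g^{(\alpha - \beta)}$, so by the decay bound and solidity $\|\jb{\,\cdot\,}^{2k} f^{(\alpha)} v^{\lambda}\|_{E} \leq C \sum_{\beta \leq \alpha} \binom{\alpha}{\beta} \|g^{(\alpha - \beta)} v^{\lambda}\|_{E} \leq C' \sum_{\beta \leq \alpha} \binom{\alpha}{\beta} N^{\mu}_{\alpha - \beta}$. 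The main obstacle is that the sum now runs over all $\beta \leq \alpha$, for which $\sum_{\beta} \binom{\alpha}{\beta} \sim 2^{|\alpha|}$. The resolution is to exploit the free parameter $R$ in $[\condwI]$ for $\NN$, which yields $N^{\mu}_{\alpha - \beta} \leq C H^{|\alpha|} R^{-|\beta|} N^{\lambda}_{\alpha}$, so that $\sum_{\beta \leq \alpha} \binom{\alpha}{\beta} R^{-|\beta|} = (1 + R^{-1})^{|\alpha|}$ stays geometric; choosing $R$ and then compensating the geometric factors $H^{|\alpha|}(1 + R^{-1})^{|\alpha|}$ by a reserve $R_{1}^{-|\alpha|}$ produced in advance by $[\condL]$ for $\NN$ gives the desired bound by $C N^{\lambda}_{\alpha}$. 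This interplay between the decay of the derivatives of $\jb{\,\cdot\,}^{-2k}$, the choice of $R$ in $[\condwI]$, and the reserve from $[\condL]$ is the heart of the proof.

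Finally, I would treat the Beurling and Roumieu cases simultaneously, exactly as in the proof of Lemma \ref{IandwI-1}. The monotonicity of the systems ($w^{\lambda} \leq w^{\mu}$ and $M^{\lambda}_{\alpha} \leq M^{\mu}_{\alpha}$ for $\mu \leq \lambda$, and dually in the Roumieu case) allows one to match the weight-function index while passing to the auxiliary sequence indices furnished by $[\condL]$ and $[\condwI]$, and the quantifier patterns of the Beurling and Roumieu forms of these two conditions are aligned so that the very same computation yields both $E^{(\M)}_{(\W_{2k})} \subseteq E^{(\NN)}_{(\V_{2k})}$ and $E^{\{\M\}}_{\{\W_{2k}\}} \subseteq E^{\{\NN\}}_{\{\V_{2k}\}}$.
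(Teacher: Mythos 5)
Your reduction of the lemma to the two mapping properties (A) and (B) is logically valid, and step (A) is fine --- it is essentially the $\Rightarrow$ direction of the paper's Lemma \ref{l:TildeSpaceIsPolynomialMultiplier} specialized to $P=\jb{\, \cdot \,}^{2k}$, and the Leibniz sum there is finite with harmless constants. The gap is in (B), precisely where you place the crux. The bound $\jb{\, \cdot \,}^{2k}|\partial^{\beta}\jb{\, \cdot \,}^{-2k}|\leq C_{\beta}\jb{\, \cdot \,}^{-|\beta|}$ is true, but the constants $C_{\beta}$ grow \emph{factorially} in $|\beta|$, not geometrically: already for $n=1$, $k=1$ the Taylor expansion $(1+x^{2})^{-1}=\sum_{j}(-1)^{j}x^{2j}$ gives $|\partial^{2j}\jb{\, \cdot \,}^{-2}(0)|=(2j)!$. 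You then write $\|\jb{\, \cdot \,}^{2k}f^{(\alpha)}v^{\lambda}\|_{E}\leq C\sum_{\beta\leq\alpha}\binom{\alpha}{\beta}\|g^{(\alpha-\beta)}v^{\lambda}\|_{E}$ with a single constant $C$, which is not available; keeping the $C_{\beta}$ inside, the term $\beta=\alpha$ alone contributes about $|\alpha|!\,\|g v^{\lambda}\|_{E}$, and the inequality $\sum_{\beta\leq\alpha}\binom{\alpha}{\beta}C_{\beta}R^{-|\beta|}\leq C'H'^{|\alpha|}$ that your scheme requires is false: the reserves supplied by $[\condL]$ and by the free parameter $R$ in $[\condwI]$ absorb only geometric factors. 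Nothing in the hypotheses forces $|\alpha|!=O(H^{|\alpha|}N^{\lambda}_{\alpha})$; e.g.\ $N^{\lambda}_{\alpha}=\lambda^{|\alpha|}(|\alpha|!)^{1/2}$ is a log-convex weight sequence system satisfying $[\condL]$ and $[\condwI]$ for which the estimate cannot close.

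The paper circumvents this by never differentiating a non-polynomial weight: Lemma \ref{l:TildeSpaceIsPolynomialMultiplier} characterizes $E^{[\M]}_{[\W_{2k}]}$ as the set of $f$ with $fP\in E^{[\M]}_{[\W]}$ for \emph{all} $P\in\mathcal{P}_{2k}$, and the delicate direction ($\Leftarrow$) proceeds by induction on the degree, gaining two powers of $\jb{\, \cdot \,}$ per step through the identity $f^{(\beta)}x_{l}=(fx_{l})^{(\beta)}-\beta_{l}f^{(\beta-e_{l})}$, whose only cost is the linear factor $\beta_{l}$ (easily absorbed by $[\condL]$ and $[\condwI]$). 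To repair your argument you would have to replace (B) by such an inductive scheme; note that the single membership $\jb{\, \cdot \,}^{2k}f\in E^{[\NN]}_{[\V]}$ is not sufficient input for it --- one needs $fP\in E^{[\NN]}_{[\V]}$ for every $P\in\mathcal{P}_{2k}$, which your step (A) (run for arbitrary $P\in\mathcal{P}_{2k}$ rather than just $\jb{\, \cdot \,}^{2k}$) does deliver.
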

	
	Lemma \ref{l:ContInclusionTildeSp} follows directly from the next result.  For $k \in \N$ we denote by $\mathcal{P}_k$ the space of all polynomials on $\R^n$ of degree at most $k$.
	
	\begin{lemma}
		\label{l:TildeSpaceIsPolynomialMultiplier}
		Assume that  $\M$ satisfies $[\condL]$ and $[\condwI]$. Let $k \in \N$ be arbitrary. Then, $f \in C^\infty(\R^n)$ belongs to $E^{[\M]}_{[\W_{2k}]}$ if and only if  $fP \in E^{[\M]}_{[\W]}$ for all $P \in \mathcal{P}_{2k}$. 
	\end{lemma}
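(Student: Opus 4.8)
The plan is to prove the equivalence by reducing both implications, via the Leibniz rule and the solidity of $E$, to a single index-shift estimate that combines $[\condL]$ and $[\condwI]$. The elementary observation driving everything is that $\jb{x}^{2k} = (1+|x|^2)^k$ is itself a polynomial of degree $2k$, while conversely $|x^\gamma| \leq \jb{x}^{2k}$ for $|\gamma| \leq 2k$ and $\jb{x}^{2k} \leq C\sum_{|\gamma| \leq 2k} |x^\gamma|$ pointwise. Hence, using that $E$ is solid, membership of $f$ in $E^{[\M]}_{[\W_{2k}]}$ is equivalent to the finiteness (with the usual quantifier over the level) of $\sup_\alpha \|f^{(\alpha)} x^\gamma w^\lambda\|_E / M^\lambda_\alpha$ for all $|\gamma| \leq 2k$, and the condition $fP \in E^{[\M]}_{[\W]}$ for all $P \in \mathcal{P}_{2k}$ is equivalent to the same statement for the products $f x^\gamma$, $|\gamma| \leq 2k$.

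The core estimate I would isolate first is the following consequence of $[\condL]$ and $[\condwI]$: for every $d \in \N$ and every level $\lambda$ there exist a level $\tau \leq \lambda$ and $C > 0$ (in the Roumieu case: for every $\sigma$ there exist $\lambda \geq \sigma$ and $C$) such that $\binom{\alpha}{\beta} M^\tau_{\alpha-\beta} \leq C M^\lambda_\alpha$ for all $\alpha \in \N^n$ and all $\beta \leq \alpha$ with $|\beta| \leq d$. To prove it I would apply $[\condwI]$ with $R = 1$ to obtain $\mu$ and $H$ with $M^\mu_{\alpha-\beta} \leq C H^{|\alpha|} M^\lambda_\alpha$, and then $[\condL]$ with exponential base $2H$ to produce $\tau$ with $(2H)^{|\alpha-\beta|} M^\tau_{\alpha-\beta} \leq C M^\mu_{\alpha-\beta}$; since $|\alpha-\beta| \geq |\alpha| - d$ and $\binom{\alpha}{\beta} \leq 2^{|\alpha|}$, the parasitic factors $H^{|\alpha|}$ and $2^{|\alpha|}$ cancel exactly, leaving $\binom{\alpha}{\beta} M^\tau_{\alpha-\beta} \leq C' M^\lambda_\alpha$. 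The level ordering $\tau \leq \mu \leq \lambda$, which I may assume after shrinking the produced levels, is what additionally lets me bound $w^\lambda \leq w^\tau$ pointwise by monotonicity of the weight function system.

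With the core estimate in hand, the forward implication is direct: writing $(fP)^{(\alpha)} = \sum_{\beta \leq \alpha} \binom{\alpha}{\beta} f^{(\alpha-\beta)} P^{(\beta)}$, using $|P^{(\beta)}| \leq C_P \jb{\cdot}^{2k}$ and solidity, I bound $\|(fP)^{(\alpha)} w^\lambda\|_E$ by a constant times a finite sum of terms $\binom{\alpha}{\beta} \|f^{(\alpha-\beta)} \jb{\cdot}^{2k} w^\tau\|_E \leq \binom{\alpha}{\beta} M^\tau_{\alpha-\beta}$, and then invoke the core estimate with $d = 2k$. For the backward implication I would argue by induction on $j = |\gamma|$ that $\sup_\alpha \|f^{(\alpha)} x^\gamma w^\lambda\|_E / M^\lambda_\alpha < \infty$ for all $|\gamma| \leq j$: the identity $f^{(\alpha)} x^\gamma = (f x^\gamma)^{(\alpha)} - \sum_{0 \neq \beta \leq \gamma} \binom{\alpha}{\beta} \tfrac{\gamma!}{(\gamma-\beta)!} f^{(\alpha-\beta)} x^{\gamma-\beta}$ expresses the wanted quantity through the controlled term $(f x^\gamma)^{(\alpha)}$ (here $x^\gamma \in \mathcal{P}_{2k}$, so the hypothesis applies directly) and correction terms $f^{(\alpha-\beta)} x^{\gamma-\beta}$ of strictly lower polynomial degree $j - |\beta|$, to which the induction hypothesis applies at the shifted index $\alpha - \beta$; the core estimate again converts $\binom{\alpha}{\beta} M^\tau_{\alpha-\beta}$ into $C M^\lambda_\alpha$. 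The base case $j = 0$ is the instance $P = 1$ of the hypothesis.

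Throughout, the Beurling and Roumieu cases run in parallel under the paper's quantifier convention, the only difference being the direction in which the levels are chosen (in the Roumieu backward induction one fixes a single level controlling all $f x^\gamma$, $|\gamma| \leq 2k$, simultaneously, and the output level increases with $j$). I expect the main obstacle to be the bookkeeping in the core estimate—specifically, verifying that the factor $H^{|\alpha|}$ coming from $[\condwI]$ and the binomial factor $2^{|\alpha|}$ are absorbed by a single application of $[\condL]$, and that the resulting level $\tau$ can be taken below $\lambda$ so that the weight functions compare in the correct direction. Once this estimate is secured, both implications are short.
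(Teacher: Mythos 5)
Your argument is correct, and both implications rest on the same two pillars as the paper's proof: the Leibniz rule combined with an index-shift estimate obtained by composing $[\condwI]$ (to pass from $M^{\cdot}_{\alpha-\beta}$ to $M^{\cdot}_{\alpha}$ at the cost of $H^{|\alpha|}$) with $[\condL]$ (to absorb $H^{|\alpha|}$ and the binomial factor $2^{|\alpha|}$), plus solidity of $E$; your ``core estimate'' is exactly the inequality $M^{\mu}_{\alpha}\leq C2^{-|\alpha+\beta|}M^{\lambda}_{\alpha+\beta}$ that the paper extracts inline, and your forward implication is the paper's almost verbatim. The one genuine difference is the organization of the backward implication: the paper inducts on $j$ over the spaces $E^{M^{\lambda}}_{w^{\lambda},\mathcal{P}_{2j}}$, uses the polynomial $\jb{\,\cdot\,}^{2j+2}\in\mathcal{P}_{2j+2}$ as the test multiplier, and then needs a separate elementary estimate to reduce the odd power $\jb{\,\cdot\,}^{2j+1}$ appearing in the Leibniz remainder back to $\jb{\,\cdot\,}^{2j}$ (via $|f^{(\beta)}x_{l}|\leq|(fx_{l})^{(\beta)}|+\beta_{l}|f^{(\beta-e_{l})}|$), whereas you first reduce membership in $E^{[\M]}_{[\W_{2k}]}$ to control of the monomial products $f^{(\alpha)}x^{\gamma}$, $|\gamma|\leq 2k$, and induct on $|\gamma|$ one degree at a time using the exact identity $f^{(\alpha)}x^{\gamma}=(fx^{\gamma})^{(\alpha)}-\sum_{0\neq\beta\leq\gamma}\binom{\alpha}{\beta}\frac{\gamma!}{(\gamma-\beta)!}f^{(\alpha-\beta)}x^{\gamma-\beta}$. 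This sidesteps the odd-power detour entirely and makes the correction terms fall directly under the induction hypothesis, at the small price of the (correct) preliminary reduction $\jb{x}^{2k}\asymp\sum_{|\gamma|\leq 2k}|x^{\gamma}|$ via solidity; your handling of the levels (taking $\tau\leq\mu\leq\lambda$ in the Beurling case, a single common level for the finitely many monomials in the Roumieu case) is also sound. In short: same method, slightly cleaner bookkeeping in the reverse direction.
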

	
	\begin{proof}
	$\Rightarrow$: Since $\M$ satisfies $[\condL]$ and $[\condwI]$, it holds that for every $\lambda > 0$ there are $\mu > 0$ and $C >0$ (for every $\mu > 0$ there are $\lambda> 0$ and  $C > 0$) such that $M^\mu_\alpha \leq C2^{-|\alpha + \beta|} M^\lambda_{\alpha + \beta}$ for all $\alpha, \beta \in \N^n$. We may assume that $\mu \leq \lambda$. Let $P \in \mathcal{P}_{2k}$ be arbitrary. There is $C' > 0$ such that $|P^{(\alpha)}| \leq C' \jb{\, \cdot \, }^{2k}$ for all $\alpha \in \N^{n}$.
For every $f \in E^{M^{\mu}}_{\jb{\, \cdot \, }^{2k}w^{\mu}}$ and all $\alpha \in \N^n$
$$
|(fP)^{(\alpha)} |w^{\lambda}  \leq C' \sum_{\beta \leq \alpha} {\alpha \choose \beta} |f^{(\beta)}|   \jb{\, \cdot \, }^{2k}w^{\mu}.
$$
Since $E$ is solid, we obtain that $(fP)^{(\alpha)} w^{\lambda} \in E$ and 
$$
\| (fP)^{(\alpha)} w^{\lambda}  \|_E \leq CC' \| f \|_{E^{M^{\mu}}_{  \jb{\, \cdot \, }^{2k}w^{\mu}}} M^\lambda_{\alpha}
$$	
and thus  $fP \in E^{M^{\lambda}}_{w^{\lambda}}$, as desired.

\noindent $\Leftarrow$: For $\lambda >0$ and $j \in \N$ we write $E^{M^\lambda}_{w^\lambda, \mathcal{P}_{2j}}$ for the space consisting of all $f \in C^\infty(\R^n)$ such that $fP \in E^{M^\lambda}_{w^\lambda}$ for all $P \in \mathcal{P}_{2j}$. We claim that for all $j \in \N$ it holds that for every $\lambda >0$ there is $\mu >0$ (for every $\mu >0$ there is $\lambda >0$) such that $E^{M^\mu}_{w^\mu, \mathcal{P}_{2j}} \subseteq E^{M^\lambda}_{ \jb{\, \cdot \, }^{2j}w^\lambda}$. This implies the result as for $f \in C^\infty(\R^n)$ one has that $fP \in E^{[\M]}_{[\W]}$ for all $P \in \mathcal{P}_{2j}$ exactly means that $f \in  E^{M^\mu}_{w^\mu, \mathcal{P}_{2j}}$ for all $\mu >0$ (for some $\mu >0)$. 
We now show the claim via induction on $j$. The case $j =0$ is trivial. Suppose that the claim is true for $j$ and let us verify it for $j +1$. 
The induction hypothesis and the fact that $\M$ satisfies   $[\condL]$ and $[\condwI]$ imply that for all $\lambda > 0$ there are $\mu,\nu >0$ and $C >0$ (for every $\mu>0$ there are $\nu,\lambda >0$ and $C >0$) such that  $E^{M^\mu}_{w^\mu, \mathcal{P}_{2j}} \subseteq E^{M^\nu}_{ \jb{\, \cdot \, }^{2j}w^\nu}$ and $M^\nu_\alpha \leq C 3^{-|\alpha + \beta|} M^\lambda_{\alpha + \beta}$ for all $\alpha, \beta \in \N^n$. We may assume that $\mu \leq \nu \leq \lambda$. 
Note that $\jb{\, \cdot \, }^{2j+2} \in \mathcal{P}_{2j+2}$ and that there is $C' > 0$ such that 
	\[  |(\jb{\, \cdot \, }^{2j+2})^{(\alpha)}| \leq C'\jb{\, \cdot \, }^{2j+2 - |\alpha|}, \qquad |\alpha| \leq 2j+2. \]
Hence, for every $f \in E^{M^\mu}_{w^\mu, \mathcal{P}_{2j+2}}$ and all $\alpha \in \N^n$ 
	\begin{align*}
		| f^{(\alpha)}|  \jb{\, \cdot \, }^{2j+2}w^\lambda &\leq  |(f  \jb{\, \cdot \, }^{2j+2})^{(\alpha)}|w^\mu + C'\sum_{\substack{\beta \leq \alpha \\ 1 \leq |\alpha - \beta| \leq 2j+2}} {\alpha \choose \beta} |f^{(\beta)}| \jb{\, \cdot \, }^{2j+2 - |\alpha -\beta|} w^\nu \\
		&\leq  |(f  \jb{\, \cdot \, }^{2j+2})^{(\alpha)}|w^\mu + C'\sum_{\beta \leq \alpha} {\alpha \choose \beta} |f^{(\beta)}| \jb{\, \cdot \, }^{2j+1} w^\nu.  \\
	\end{align*}
Furthermore, for each $\beta \in \N^n$ and for all $x \in \R^n$,
	\begin{align*}
		|f^{(\beta)}(x)| \jb{x}^{2j+1}  &\leq |f^{(\beta)}(x)| \jb{x}^{2j} + \sum_{1 \leq l \leq n } |f^{(\beta)}(x)x_l | \jb{x}^{2j}  \\
		&\leq |f^{(\beta)}(x)| \jb{x}^{2j} + \sum_{1 \leq l \leq n } |(f(x)x_l)^{(\beta)} | \jb{x}^{2j} + \sum_{\substack{1 \leq l \leq n \\ \beta_l \neq 0}}\beta_l |f^{(\beta - e_l)}(x) | \jb{x}^{2j}. 
	\end{align*}
The induction hypothesis and the fact that $E$ is solid therefore imply that $f^{(\alpha)}  \jb{\, \cdot \, }^{2j+2}w^\lambda \in E$ for all $\alpha \in \N^n$ and
	$$
	\| f^{(\alpha)}  \jb{\, \cdot \, }^{2j+2}w^\lambda \|_E \leq C'' M^\lambda_\alpha,
	$$
where
	$$
	C'' =  \|f  \jb{\, \cdot \, }^{2j+2}\|_{E^{M^\mu}_{w^\mu}} + CC'(n+1)\| f \|_{E^{M^\nu}_{\jb{\, \cdot \, }^{2j}w^\nu}} + CC'\sum_{1\leq l \leq n} \| fx_l \|_{E^{M^\nu}_{\jb{\, \cdot \, }^{2j}w^\nu}},
	$$
and thus $f \in E^{M^\lambda}_{\jb{\, \cdot \, }^{2j+2}w^\lambda}$. This shows the claim. 
\end{proof}	
		
Given  a weight sequence $M$, we define $E^{M}_{\operatorname{per}}$ as the Banach space consisting of all $\Z^{n}$-periodic $f \in C^\infty(\R^n)$ such that $f^{(\alpha)}1_{[0, 1]^{n}}  \in E$ for all $\alpha \in \N^n$ and 
	$$
	\|f\|_{E^{M}_{\operatorname{per}}} = \sup_{\alpha \in \N^{n}} \frac{\| f^{(\alpha)}1_{[0, 1]^{n}}\|_{E}}{M_{\alpha}} < \infty. 
	$$
We set
	\[ E^{(\M)}_{\operatorname{per}}  = \varprojlim_{\lambda \to 0^{+}} E^{M^{\lambda}}_{\operatorname{per}} , \qquad E^{\{\M\}}_{\operatorname{per}}= \varinjlim_{\lambda \to \infty} E^{M^{\lambda}}_{\operatorname{per}} . \]
Then, $E^{(\M)}_{\operatorname{per}}$ is a Fr\'echet space and  $E^{\{\M\}}_{\operatorname{per}}$ is an $(LB)$-space.

\begin{proposition}
	\label{t:InclusionPeriodicFunc}
Assume that $\M$ is log-convex and satisfies $[\condwI]$ and that $\NN$ satisfies $[\condwI]$. Then, $E^{[\M]}_{\operatorname{per}} \subseteq E^{[\NN]}_{\operatorname{per}}$ as sets  if and only if $\M [\subseteq] \NN$. 	 
\end{proposition}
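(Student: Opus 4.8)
The plan is to establish the nontrivial implication, that $E^{[\M]}_{\per} \subseteq E^{[\NN]}_{\per}$ forces $\M [\subseteq] \NN$; the converse is immediate from the definition of the norms, since $M^{\mu}_{\alpha} \leq C N^{\lambda}_{\alpha}$ gives $\|f\|_{E^{N^{\lambda}}_{\per}} \leq C \|f\|_{E^{M^{\mu}}_{\per}}$ and hence the continuous inclusion in both the Beurling and the Roumieu case. The whole argument is designed to parallel the proof of Proposition \ref{t:InclusionWeightedSpaces}, with the trigonometric characters taking over the role played there by the unit sequences.

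First I would upgrade the set-theoretic inclusion to a continuous one by De Wilde's closed graph theorem and extract a quantitative seminorm estimate, invoking the Grothendieck factorization theorem in the Roumieu case: for every $\lambda$ there are $\mu, C$ (for every $\mu$ there are $\lambda, C$) with $\|f\|_{E^{N^{\lambda}}_{\per}} \leq C \|f\|_{E^{M^{\mu}}_{\per}}$ for all relevant $f$. Then I would feed this estimate the $\Z^{n}$-periodic smooth characters $e_{\xi}(x) = e^{2\pi i \xi \cdot x}$, $\xi \in \Z^{n}$. Since $|e_{\xi}^{(\alpha)}| = (2\pi)^{|\alpha|}|\xi^{\alpha}|$ is constant, solidity of $E$ yields $\|e_{\xi}^{(\alpha)} 1_{[0,1]^{n}}\|_{E} = (2\pi)^{|\alpha|}|\xi^{\alpha}| \, \|1_{[0,1]^{n}}\|_{E}$, whence $\|e_{\xi}\|_{E^{M}_{\per}} = \|1_{[0,1]^{n}}\|_{E} \exp \omega_{M}(2\pi\xi)$. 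Plugging $e_{\xi}$ into the seminorm estimate and cancelling the nonzero finite factor $\|1_{[0,1]^{n}}\|_{E}$ then produces the lattice comparison
\[ \exp \omega_{N^{\lambda}}(2\pi \xi) \leq C \exp \omega_{M^{\mu}}(2\pi \xi), \qquad \xi \in \Z^{n}, \]
with the quantifiers on $\lambda, \mu$ as above.

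It remains to convert this lattice comparison of associated functions into the sequence inclusion $\M [\subseteq] \NN$ (equivalently $\W_{\M} [\subseteq] \W_{\NN}$ by Lemma \ref{l:InclusionsWeightSeqEquivWeightFunc}), and this is the main obstacle: the characters only sample the associated functions on the coarse lattice $2\pi\Z^{n}$, whereas the desired relation $M^{\mu'} \subseteq N^{\lambda}$ is a statement about all of $\R^{n}$, and it must be achieved with a genuine constant, not with an extra factor $H^{|\alpha|}$ (which would only give $\preceq$). To bridge the gap I would use log-convexity through the representation \eqref{eq:WeightSeqFromWeightFunc}: writing $N^{\lambda}_{\alpha} = \sup_{x} |x^{\alpha}|/\exp\omega_{N^{\lambda}}(x)$ and combining with the lattice comparison reduces everything to a discretization statement about $\M$ alone, namely that the full supremum $M^{\mu'}_{\alpha} = \sup_{x} |x^{\alpha}|/\exp\omega_{M^{\mu'}}(x)$ is dominated, up to a constant and a shift $\mu \to \mu'$ of the index, by its lattice restriction $\sup_{\xi} |(2\pi\xi)^{\alpha}|/\exp\omega_{M^{\mu}}(2\pi\xi)$.

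Finally I would prove this discretization statement. Rounding a near-maximizer $x^{*}$ of $M^{\mu'}_{\alpha}$ up to the nearest lattice point $2\pi\xi \geq x^{*}$ costs nothing in the numerator, since $|(2\pi\xi)^{\alpha}| \geq |x^{*\alpha}|$, and introduces only a bounded additive shift $s = 2\pi\xi - x^{*} \in [0,2\pi)^{n}$ in the denominator. The task is therefore to absorb this shift, i.e.\ to bound $\exp\omega_{M^{\mu}}(x^{*}+s) \leq C \exp\omega_{M^{\mu'}}(x^{*})$ for $|s|$ bounded; this is exactly the point where condition $[\condwI]$ should enter. Expanding $(x^{*}+s)^{\alpha}$ by the binomial theorem and applying the index-shift inequality $M^{\mu'}_{\beta} R^{|\gamma|} \leq C H^{|\beta+\gamma|} M^{\mu}_{\beta+\gamma}$ with a sufficiently large $R$ (exploiting the ``$\forall R$'' strength of $[\condwI]$ to make the resulting geometric error factors summable) is the intended mechanism for this bound, morally reproducing $[\condwM]$-type stability of $\W_{\M}$ under bounded translations directly from $[\condwI]$. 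I expect this discretization step—taming the associated function under bounded lattice shifts while keeping the loss a true constant—to be the technical heart and the principal difficulty of the proof.
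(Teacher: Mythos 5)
Your proposal is correct and follows essentially the same route as the paper: De Wilde's closed graph theorem plus Grothendieck factorization to get the quantitative estimate, the characters $e^{2\pi i k\cdot x}$ to obtain $\exp\omega_{N^{\lambda}}(2\pi k)\le C\exp\omega_{M^{\mu}}(2\pi k)$ on the lattice, and then $[\condwI]$-derived stability of the associated functions under bounded shifts together with log-convexity (via \eqref{eq:WeightSeqFromWeightFunc}) to return to the sequence relation. The paper merely packages your ``discretization'' step differently, as Lemma \ref{IandwI-1}(i) (yielding $[\condwM]$ for $\W_{\M}$ and $\W_{\NN}$) followed by Lemma \ref{l:InclusionsWeightSeqEquivWeightFunc}, which is exactly the mechanism you describe.
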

	
\begin{proof}
Clearly, $\M [\subseteq] \NN$ implies that $E^{[\M]}_{\operatorname{per}} \subseteq E^{[\NN]}_{\operatorname{per}}$. 
Now suppose that $E^{[\M]}_{\operatorname{per}} \subseteq E^{[\NN]}_{\operatorname{per}}$. 
De Wilde's closed graph theorem yields that the inclusion $E^{[\M]}_{\operatorname{per}} \subseteq E^{[\NN]}_{\operatorname{per}}$ holds continuously. 
Consequently (making use again of the Grothendieck factorization theorem in the Roumieu case), for every $\lambda > 0$ there are $\mu > 0$ and $C > 0$ (for every $\mu > 0$ there are $\lambda> 0$ and  $C > 0$) such that
	$$
		\| f \|_{E^{N^\lambda}_{\operatorname{per}}}  \leq C \| f \|_{E^{M^\mu}_{\operatorname{per}}}  , \qquad \forall  f \in E^{(\M)}_{\operatorname{per}}  \, (\forall f \in E^{M^\mu}_{\operatorname{per}}).
	$$
Taking the functions $f_k(x) = e^{2 \pi i kx }$, $k \in \Z^n$, in this inequality, we obtain, by the solidity of $E$, that for all $k \in \Z^n$
	$$
		\|1_{[0, 1]^{n}}\|_{E}	\exp \omega_{N^\lambda}(2\pi k) = \| f_k \|_{E^{N^\lambda}_{\operatorname{per}}}  \leq  C \| f_k \|_{E^{M^\mu}_{\operatorname{per}}} = C\|1_{[0, 1]^{n}}\|_{E}	 \exp \omega_{M^\mu}(2\pi k). 
	$$
Since both $\W_\M$ and $\W_\NN$ satisfy $[\condwM]$ (Lemma \ref{IandwI-1}(i)), by using a similar argument as in the proof of Proposition \ref{t:InclusionWeightedSpaces}, the previous inequality implies that  $\W_\M [\subseteq] \W_\NN$. The result now follows from  Lemma \ref{l:InclusionsWeightSeqEquivWeightFunc}.
\end{proof}

\begin{lemma}\label{hulp1}
Assume that $\M$ satisfies $[\condL]$ and $[\condI]$. Let $k \in \N$ be arbitrary and let $\psi \in \mathcal{S}^{[\M]}_{[\W_{k+n+1}], \infty}$. Then, $L_\psi(f) = \psi f \in E^{[\M]}_{[\W_k]}$ for all  $f \in E^{[\M]}_{\operatorname{per}}$. 
\end{lemma}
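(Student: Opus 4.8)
The plan is to estimate the defining seminorms of $E^{[\M]}_{[\W_k]}$ directly, combining the Leibniz rule with a decomposition of $\R^n$ into the unit cubes $j+[0,1]^n$, $j\in\Z^n$, and exploiting that $f$ is $\Z^n$-periodic, so that on each cube its derivatives reduce, up to translation, to their restriction to $[0,1]^n$. Fixing the target index (a given $\lambda>0$ in the Beurling case, an index $\lambda$ to be produced in the Roumieu case), the goal is the bound $\|(\psi f)^{(\alpha)}\jb{\cdot}^k w^\lambda\|_E\le C M^\lambda_\alpha$ for all $\alpha\in\N^n$. By Leibniz, $(\psi f)^{(\alpha)}=\sum_{\beta\le\alpha}\binom{\alpha}{\beta}\psi^{(\beta)}f^{(\alpha-\beta)}$, so it suffices to control each product $\psi^{(\beta)}f^{(\alpha-\beta)}\jb{\cdot}^k w^\lambda$. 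Writing this function as a sum of its restrictions to the cubes $j+[0,1]^n$ and invoking solidity of $E$, I bound the $E$-norm of the $j$-th piece by $\|\psi^{(\beta)}\jb{\cdot}^k w^\lambda 1_{j+[0,1]^n}\|_{L^\infty}\,\|f^{(\alpha-\beta)}1_{j+[0,1]^n}\|_E$. For the $f$-factor, periodicity gives $f^{(\alpha-\beta)}1_{j+[0,1]^n}=T_j(f^{(\alpha-\beta)}1_{[0,1]^n})$, so by (A.2) its $E$-norm is at most $C_0 M^\mu_{\alpha-\beta}\|f\|_{E^{M^\mu}_{\per}}$. For the $\psi$-factor, membership $\psi\in\mathcal{S}^{M^\sigma}_{\jb{\cdot}^{k+n+1}w^\sigma,\infty}$ yields $|\psi^{(\beta)}(x)|\le C M^\sigma_\beta\,\jb{x}^{-(k+n+1)}w^\sigma(x)^{-1}$; provided $\sigma\le\lambda$, the quotient $w^\lambda/w^\sigma\le 1$ by monotonicity of the family, leaving a factor $\jb{x}^{-(n+1)}\le C\jb{j}^{-(n+1)}$ on the cube. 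Summing over $j$ converges (this is exactly the purpose of the extra $n+1$ powers), so the cube series converges in $E$ and the triangle inequality applies; summing over $\beta$ produces $\sum_{\beta\le\alpha}\binom{\alpha}{\beta}=2^{|\alpha|}$. Everything then reduces to the purely sequential inequality $2^{|\alpha|}M^\mu_{\alpha-\beta}M^\sigma_\beta\le C M^\lambda_\alpha$.

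The main obstacle is establishing this last inequality, and this is where $[\condI]$ and $[\condL]$ enter. Applying $[\condI]$ first (with target $\lambda$ in the Beurling case) gives source indices and constants $C,H$ with $M^\mu_{\alpha-\beta}M^\sigma_\beta\le C H^{|\alpha|}M^\lambda_\alpha$, leaving the unwanted exponential factor $(2H)^{|\alpha|}$. I would then absorb it by invoking $[\condL]$ with the now-fixed value $R=2H$, passing to slightly smaller source indices $\mu',\sigma'$ with $M^{\mu'}_\gamma\le C(2H)^{-|\gamma|}M^\mu_\gamma$ and likewise for $\sigma$; substituting these makes the product $2^{|\alpha|}(2H)^{-|\alpha|}H^{|\alpha|}$ collapse to a constant. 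The crucial point is to apply $[\condI]$ before $[\condL]$, so that $H$ is fixed before $R$ is chosen, thereby avoiding a circular dependence between the two conditions. Note that no condition on $\W$ is needed here, only the monotonicity built into the definition of a weight function system.

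It remains to reconcile the index $\sigma$ coming from $[\condL]$ with the constraint $\sigma\le\lambda$ forced by the weight quotient, which is a routine matter of quantifier bookkeeping. In the Beurling case $\psi$ lies in every $\mathcal{S}^{M^\sigma}_{\jb{\cdot}^{k+n+1}w^\sigma,\infty}$ and $f$ in every $E^{M^\mu}_{\per}$, so I may replace $\sigma$ by $\min(\sigma,\lambda)$ — which only decreases $M^\sigma_\beta$ and hence preserves the sequence inequality — and conclude membership in the projective limit. In the Roumieu case the source indices $\mu_0$ of $f$ and $\sigma_0$ of $\psi$ are prescribed: here $[\condI]$ applied to these sources produces $H$ and an index $\lambda_1$ with $M^{\mu_0}_{\alpha-\beta}M^{\sigma_0}_\beta\le C H^{|\alpha|}M^{\lambda_1}_\alpha$, and $[\condL]$ with $R=2H$ inflates $M^{\lambda_1}$ to some $M^\lambda$, absorbing $(2H)^{|\alpha|}$. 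Since the estimate only improves when $\lambda$ is increased, I enlarge $\lambda$ to $\max(\lambda,\sigma_0)$ so that both the sequence inequality and $\sigma_0\le\lambda$ hold, giving membership in the inductive limit. The two cases thus differ only in this dual quantifier handling, in line with the Beurling-then-Roumieu convention used throughout the paper.
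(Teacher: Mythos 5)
Your proposal is correct and follows essentially the same route as the paper's proof: Leibniz rule, decomposition into the unit cubes $j+[0,1]^n$ with the extra $\jb{\cdot}^{n+1}$ decay of $\psi$ supplying the summable factor $\jb{j}^{-(n+1)}$, periodicity plus (A.2) to reduce each cube to $\|f^{(\beta)}1_{[0,1]^n}\|_E$, solidity of $E$, and the combination of $[\condI]$ (fixing $H$ first) with $[\condL]$ (taking $R=2H$) to absorb the $2^{|\alpha|}$ from the binomial sum. The quantifier bookkeeping you describe (shrinking $\nu$ in the Beurling case, enlarging $\lambda$ in the Roumieu case to ensure $w^\lambda\le w^\nu$) matches the paper's ``we may assume $\nu\le\lambda$.''
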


\begin{proof}
Let $\nu > 0$ be such that $\psi \in \mathcal{S}^{M^\nu}_{\jb{\,\cdot \,}^{k+n+1}w^\nu,\infty}$; this means that $\nu$  is fixed in the Roumieu case but can be taken as small as needed in the Beurling case. 
As $\M$ satisfies $[\condL]$ and $[\condI]$, for every $\lambda > 0$ we can find $\mu, \nu > 0$ and $C > 0$ (for every $\mu > 0$ there are $\lambda > 0$ and $C > 0$) such that $M^{\mu}_{\alpha}M^\nu_{\beta} \leq C2^{-|\alpha + \beta|}  M^{\lambda}_{\alpha + \beta}$ for all $\alpha, \beta \in \N^n$. 
We may assume that $\nu \leq \lambda$. For each $f \in  E^{M^{\mu}}_{\operatorname{per}}$ and all $\alpha \in \N^n$,
\begin{align*}
&| (\psi f)^{(\alpha)} | \jb{\, \cdot \,}^kw^{\lambda}  \\
&\leq  \sum_{\beta \leq \alpha} {\alpha \choose \beta} \sum_{j \in \Z^{n}} | \psi^{(\alpha -\beta)} | \jb{\, \cdot \,}^kw^{\nu} |f^{(\beta)}| T_j 1_{[0,1]^n} \\
&\leq (2(n+1))^{(n+1)/2}   \sum_{\beta \leq \alpha} {\alpha \choose \beta}  | \psi^{(\alpha - \beta)} | \jb{\, \cdot \,}^{k+n+1}w^{\nu} \sum_{j \in \Z^{n}} |f^{(\beta)}| T_j 1_{[0,1]^n} \jb{j}^{-(n+1)}.
\end{align*}
Since $f$ is $\Z^n$-periodic, we have
$$
 \sum_{j \in \Z^{n}} \|f^{(\beta)} T_j 1_{[0,1]^d}\|_E \jb{j}^{-(n+1)} \leq C_0   \|f^{(\beta)} 1_{[0,1]^d} \|_E \sum_{j \in \Z^{n}}  \jb{j}^{-(n+1)},
$$
where $C_0$ is the constant from (A.2). The fact that $E$ is solid therefore implies that $(\psi f)^{(\alpha)} \jb{\, \cdot \,}^kw^{\lambda}  \in E$ and (with $C' = C_0 (2(n+1))^{(n+1)/2}\sum_{j \in \Z^{n}}  \jb{j}^{-(n+1)}$)
\begin{align*}
\| (\psi f)^{(\alpha)}  \jb{\, \cdot \,}^kw^{\lambda} \|_E &\leq C' \| \psi\|_{ \mathcal{S}^{M^\nu}_{\jb{\,\cdot \,}^{k+n+1}w^\nu,\infty}} \| f\|_{E^{M^{\mu}}_{\operatorname{per}}} \sum_{\beta \leq \alpha} {\alpha \choose \beta} M^\mu_\beta M^\nu_{\alpha - \beta} \\
&\leq  CC' \| \psi\|_{ \mathcal{S}^{M^\nu}_{\jb{\,\cdot \,}^{k+n+1}w^\nu,\infty}} \| f\|_{E^{M^{\mu}}_{\operatorname{per}}} M^{\lambda}_{\alpha}
\end{align*}		
and thus $	\psi f \in E^{M^\lambda}_{\jb{\,\cdot \,}^{k}w^\lambda}$. 
This completes the proof of the lemma.	
\end{proof}		

\begin{lemma}\label{hulp2}
For all $f \in {E}^{[\NN]}_{[\V_{n+1}]}$,
	\[ \Pi(f) = \sum_{j \in \Z^{n}} T_{j} f \in  E^{[\NN]}_{\operatorname{per}}.  \]		
\end{lemma}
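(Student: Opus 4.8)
The plan is to prove the stronger, index-matched statement that for each fixed $\lambda > 0$ the periodization map $\Pi$ sends $E^{N^\lambda}_{\jb{\, \cdot \,}^{n+1}v^\lambda}$ continuously into $E^{N^\lambda}_{\operatorname{per}}$; passing to the projective limit (Beurling case) or inductive limit (Roumieu case) then yields the lemma in both cases simultaneously. So I fix such a $\lambda$ and an $f \in E^{N^\lambda}_{\jb{\, \cdot \,}^{n+1}v^\lambda}$.

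First I would settle the well-definedness of $\Pi(f)$ as a smooth $\Z^n$-periodic function. From $f^{(\alpha)}\jb{\, \cdot \,}^{n+1}v^\lambda \in E$, the estimate \eqref{eqclaimineq} (applied to this function) together with $v^\lambda \geq 1$ gives $f^{(\alpha)} \in L^1(\R^n)$ for every $\alpha$. The borderline Sobolev embedding on a unit cube provides a pointwise bound $|f^{(\alpha)}(y)| \leq C\sum_{|\gamma| \leq n}\|f^{(\alpha + \gamma)}\|_{L^1(y + [0,1]^n)}$, and summing over the tiling $\{j + [0,1]^n\}_{j \in \Z^n}$ yields $\sum_{j\in\Z^n}|f^{(\alpha)}(x - j)| \leq C\sum_{|\gamma|\leq n}\|f^{(\alpha+\gamma)}\|_{L^1}$, a bound uniform in $x$. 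Hence every series $\sum_j T_j f^{(\alpha)}$ converges uniformly on $\R^n$, so $\Pi(f) \in C^\infty(\R^n)$ with $(\Pi f)^{(\alpha)} = \sum_j T_j f^{(\alpha)}$, and reindexing the absolutely convergent sum shows $\Pi(f)$ is $\Z^n$-periodic.

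Second, for the norm estimate I would record the elementary geometric fact that there is $c > 0$ with $\jb{x - j} \geq c\jb{j}$ for all $x \in [0,1]^n$ and $j \in \Z^n$. Since $v^\lambda \geq 1$, this gives, for $x \in [0,1]^n$, the bound $|(T_j f^{(\alpha)})(x)| \leq c^{-(n+1)}\jb{j}^{-(n+1)}|T_j(f^{(\alpha)}\jb{\, \cdot \,}^{n+1}v^\lambda)(x)|$. Solidity of $E$ together with condition (A.2) then yields $\|(T_j f^{(\alpha)})1_{[0,1]^n}\|_E \leq C_0 c^{-(n+1)}\jb{j}^{-(n+1)}\|f^{(\alpha)}\jb{\, \cdot \,}^{n+1}v^\lambda\|_E$. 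As $\sum_{j}\jb{j}^{-(n+1)} < \infty$, the series $\sum_j (T_j f^{(\alpha)})1_{[0,1]^n}$ converges absolutely in $E$; by the uniform convergence from the first step its sum is $(\Pi f)^{(\alpha)}1_{[0,1]^n}$, whence $\|(\Pi f)^{(\alpha)}1_{[0,1]^n}\|_E \leq C' \|f^{(\alpha)}\jb{\, \cdot \,}^{n+1}v^\lambda\|_E$ with $C'$ independent of $\alpha$. Dividing by $N^\lambda_\alpha$ and taking the supremum over $\alpha$ gives $\|\Pi(f)\|_{E^{N^\lambda}_{\operatorname{per}}} \leq C'\|f\|_{E^{N^\lambda}_{\jb{\, \cdot \,}^{n+1}v^\lambda}}$, as desired.

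The part I expect to be the main obstacle is the well-definedness step rather than the norm bound: because the partial sums $\sum_{|j| \leq R} T_j f$ are not periodic, they do not live in the target space $E^{N^\lambda}_{\operatorname{per}}$, so one cannot argue by Cauchy convergence there and must instead establish smoothness and periodicity of $\Pi(f)$ by hand through genuine pointwise (uniform) control of the derivative series --- which is exactly where the passage from $E$-membership to $L^1$-decay and the Sobolev pointwise estimate enter. Once $\Pi(f)$ is known to be a smooth periodic function with $(\Pi f)^{(\alpha)} = \sum_j T_j f^{(\alpha)}$, the $E$-norm estimate is routine given the decay $\jb{x-j} \gtrsim \jb{j}$ and solidity.
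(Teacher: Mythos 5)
Your proof is correct and follows essentially the same route as the paper: the key step in both is the pointwise bound $1_{[0,1]^n}\sum_{j}|T_{j}f^{(\alpha)}| \lesssim \sum_{j}\jb{j}^{-(n+1)}|T_{j}(f^{(\alpha)}\jb{\,\cdot\,}^{n+1}v^{\lambda})|$ on the unit cube, combined with (A.2), solidity of $E$, and the summability of $\jb{j}^{-(n+1)}$. The only difference is that you additionally justify the smoothness and periodicity of $\Pi(f)$ (term-by-term differentiation via the $L^{1}$-Sobolev pointwise estimate), a point the paper's proof leaves implicit.
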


\begin{proof} Let $\lambda >0$ be arbitrary. For all $f \in  {E}^{N^{\lambda}}_{\jb{\,\cdot \,}^{n+1} v^{\lambda}}$ and $\alpha \in \N^n$,
	$$
		1_{[0,1]^n}  \sum_{j \in \Z^{n}} |T_{j} f^{(\alpha)}| \leq (2(n+1))^{(n+1)/2} \sum_{j \in \Z^n}  |T_j( f^{(\alpha)} \jb{\, \cdot \,}^{n+1})| \jb{j}^{-(n+1)}. 
	$$
Moreover,
	$$
		\sum_{j \in \Z^n} \| T_j( f^{(\alpha)} \jb{\, \cdot \,}^{n+1})\|_E \jb{j}^{-(n+1)} \leq C_0\| f^{(\alpha)} \jb{\, \cdot \,}^{n+1} v^\lambda \|_E \sum_{j \in \Z^n}\jb{j}^{-(n+1)},
	$$
where $C_0$ is the constant from (A.2). Hence, as  $E$ is solid, we obtain that $\sum_{j \in \Z^{n}} T_{j} f \in E^{N^\lambda}_{\operatorname{per}}$, as claimed.
\end{proof}		

\begin{lemma}
\label{l:SpecificWindow2}
 Assume that $\M$ satisfies $[\condL]$ and $[\condI]$,  and $\W$ satisfies $[\condwM]$. Suppose that  $E^{[\M]}_{[\W]}  \neq \{0 \}$.  Let $k \in \N$ be arbitrary. Then, there exists  $\psi \in \mathcal{S}^{[\M]}_{[\W_{k}], \infty}$ such that $\Pi (\psi) = \sum_{j \in \Z^{n}} T_{j} \psi \equiv 1$.	
\end{lemma}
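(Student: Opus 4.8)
The plan is to produce $\psi$ by normalizing a nonnegative element of the class against its own periodization. Concretely, I will construct a nonnegative $\varphi \in \mathcal{S}^{[\M]}_{[\W_{k+n+1}],\infty}$ whose periodization $\Phi := \Pi(\varphi) = \sum_{j \in \Z^n} T_j\varphi$ is everywhere strictly positive, and then set $\psi = \varphi/\Phi$. Since $\Phi$ is $\Z^n$-periodic, $T_j\Phi = \Phi$ for all $j$, and hence
\[ \Pi(\psi) = \sum_{j \in \Z^n} \frac{T_j\varphi}{T_j\Phi} = \frac{1}{\Phi}\sum_{j \in \Z^n} T_j\varphi = \frac{\Phi}{\Phi} \equiv 1, \]
which is the desired identity. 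The whole difficulty is therefore to guarantee, first, that such a $\varphi$ exists and, second, that the quotient $\psi$ stays in $\mathcal{S}^{[\M]}_{[\W_k],\infty}$.

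For the construction of $\varphi$ I would first invoke Lemma \ref{lemma-nt} (with $k+n+1$ in place of $k$; this is legitimate because $[\condI]$ implies $[\condwI]$) to obtain some $\phi \in \mathcal{S}^{[\M]}_{[\W_{k+n+1}],\infty}\setminus\{0\}$, and put $\varphi_0 = |\phi|^2 = \phi\bar\phi \ge 0$. The key point is that $\varphi_0$ again lies in $\mathcal{S}^{[\M]}_{[\W_{k+n+1}],\infty}$: applying the Leibniz rule to $\varphi_0^{(\alpha)}$ and bounding each factor by the corresponding $\mathcal{S}$-norm, the weight-function factor $\jb{\, \cdot \,}^{k+n+1}w^\lambda$ causes no trouble because these weights are $\ge 1$ (so $\jb{\, \cdot \,}^{k+n+1}w^\lambda \le (\jb{\, \cdot \,}^{k+n+1}w^\lambda)^2$ pointwise, and no submultiplicativity of $\W$ is needed), while the sequence factors are controlled exactly as in the proof of Lemma \ref{IandwI-1}(ii), using $[\condI]$ together with $[\condL]$ to absorb the binomial and geometric factors. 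Since $\varphi_0$ is continuous, nonzero and nonnegative, it is strictly positive on some ball; using that $\mathcal{S}^{[\M]}_{[\W_{k+n+1}],\infty}$ is translation-invariant (as $\W_{k+n+1}$ inherits $[\condwM]$ from $\W$), I replace $\varphi_0$ by a finite sum of translates $\varphi = \sum_i T_{a_i}\varphi_0$ chosen so that $\varphi > 0$ on a neighborhood of $[0,1]^n$. As the $\Z^n$-translates of this neighborhood cover $\R^n$ and all summands of $\Phi$ are nonnegative, $\Phi = \Pi(\varphi)$ is then strictly positive; being continuous and periodic, it is bounded below by a positive constant $c$.

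It remains to check that $\psi = \varphi\cdot\Phi^{-1}$ belongs to $\mathcal{S}^{[\M]}_{[\W_k],\infty}$. Since $\varphi \in \mathcal{S}^{[\M]}_{[\W_{k+n+1}],\infty}$ and $\W_{k+n+1} = (\W_k)_{n+1}$, Lemma \ref{hulp2} (applied with $E = L^\infty$, $\NN = \M$ and $\V = \W_k$) yields $\Phi \in (L^\infty)^{[\M]}_{\operatorname{per}}$. The main obstacle is to show that the reciprocal $\Phi^{-1}$ again lies in $(L^\infty)^{[\M]}_{\operatorname{per}}$, i.e.\ that this periodic ultradifferentiable class is inverse-closed. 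I would establish this by a Faà di Bruno estimate for $\Phi^{-1} = g\circ\Phi$ with $g(z)=1/z$ analytic on a neighborhood of the compact set $\Phi(\R^n)\subseteq[c,\infty)$: the derivatives $(\Phi^{-1})^{(\alpha)}$ are finite combinations of products $\Phi^{(\beta_1)}\cdots\Phi^{(\beta_r)}$ divided by powers of $\Phi \ge c$, and the products of the sequences are handled by iterating $[\condI]$ (again absorbing the combinatorial and geometric constants through $[\condL]$), precisely the inequalities available in the log-convex, $[\condI]$-stable setting.

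Once $\Phi^{-1} \in (L^\infty)^{[\M]}_{\operatorname{per}}$ is available, Lemma \ref{hulp1} (with $E=L^\infty$, the same $k$, the $\psi$-slot filled by $\varphi \in \mathcal{S}^{[\M]}_{[\W_{k+n+1}],\infty}$, and $f = \Phi^{-1}$) gives $\psi = \varphi\,\Phi^{-1} \in (L^\infty)^{[\M]}_{[\W_k]} = \mathcal{S}^{[\M]}_{[\W_k],\infty}$, which together with the periodization identity above completes the proof. The inverse-closedness step is the only genuinely new estimate; everything else is assembled from Lemmas \ref{lemma-nt}, \ref{hulp2} and \ref{hulp1}, with the product and reciprocal closure both resting on log-convexity and $[\condI]$, and with the weight-function factors made harmless by the normalization $w^\lambda \ge 1$.
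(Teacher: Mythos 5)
Your reduction of the periodization identity to $\psi=\varphi/\Pi(\varphi)$ is sound as far as it goes, but the step you yourself flag as ``the only genuinely new estimate'' --- inverse-closedness of $(L^\infty)^{[\M]}_{\operatorname{per}}$ --- is a genuine gap, and under the hypotheses of the lemma I do not see how to close it along the lines you sketch. In the Fa\`a di Bruno expansion of $(\Phi^{-1})^{(\alpha)}$ the products $\Phi^{(\beta_1)}\cdots\Phi^{(\beta_r)}$ have a number of factors $r$ that grows with $|\alpha|$, so you need a bound of the form $\prod_{i=1}^{r}M^{\mu}_{\beta_i}\leq C H^{|\alpha|}M^{\lambda}_{\alpha}$ with $C,H$ \emph{uniform in $r$}. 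A single application of $[\condI]$ turns a product of two sequence values at parameters $\mu,\nu$ into one value at a third parameter $\lambda$, with new constants $C,H$; iterating it $r$ times changes the parameter and multiplies the constants at every step, so no uniform-in-$r$ estimate follows from $[\condI]$ as stated. (The clean iterated inequality $\prod_i M_{\beta_i}\leq M_{\sum_i\beta_i}$ is available for a \emph{single isotropic log-convex} weight sequence, but the present lemma does not even assume $\M$ log-convex, and the anisotropic, multi-parameter setting is exactly where such iteration breaks down.) On top of this, the combinatorial count in the multi-index Fa\`a di Bruno formula has to be absorbed, which again is not automatic here. So as written the proposal proves the lemma only in a special case.

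The paper sidesteps division entirely: take $\varphi_0=|\varphi|^2/\||\varphi|^2\|_{L^1}$ (so the product step via $[\condL]$ and $[\condI]$, which you also use, is all that is needed) and set $\psi(x)=\int_{[0,1]^n}\varphi_0(x-t)\,dt=\varphi_0*1_{[0,1]^n}(x)$. Then $\sum_{j\in\Z^n}T_j\psi=\varphi_0*\bigl(\sum_{j}T_j1_{[0,1]^n}\bigr)=\int_{\R^n}\varphi_0=1$ automatically, and membership of $\psi$ in $\mathcal{S}^{[\M]}_{[\W_k],\infty}$ only requires controlling the weight under shifts by $t\in[0,1]^n$, i.e.\ precisely $[\condwM]$ for $\W$ (and the elementary bound $\jb{x}^k\leq C\jb{x-t}^k$). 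No positivity of the periodization, no finite sums of translates, and no inverse-closedness are needed. If you want to salvage your route, you would have to either add log-convexity and an honest multi-index inverse-closedness theorem as an extra ingredient, or replace the division by this convolution trick.
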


\begin{proof}
We may assume that $k \geq n+1$. 
We start by showing that $\S^{[\M]}_{[\W_k], \infty}$ is closed under pointwise multiplication. Suppose that $\lambda \geq \mu$ and $C >0$ are such that $M^{\mu}_{\alpha} M^{\mu}_{\beta} \leq C 2^{-|\alpha + \beta|} M^{\lambda}_{\alpha + \beta}$ for all $\alpha, \beta \in \N^n$. Then, for all $\psi, \rho \in \S^{M^\mu}_{w^\mu \jb{\, \cdot \,}^k, \infty}$ and $\alpha \in \N^n$,
	\begin{align*} 
		|(\psi \rho)^{(\alpha)}| w^\lambda \jb{\, \cdot \,}^k
		&\leq \sum_{\beta \leq \alpha} {\alpha \choose \beta} |\psi^{(\alpha - \beta)}| w^\mu \jb{\, \cdot \,}^k |\rho^{(\beta)}| w^\mu \jb{\, \cdot \,}^k \\
		&\leq \| \psi \|_{L^\infty, M^\mu, w^\mu \jb{\, \cdot \,}^k} \| \rho \|_{L^\infty, M^\mu, w^\mu \jb{\, \cdot \,}^k} \sum_{\beta \leq \alpha} {\alpha \choose \beta} M^\mu_{\alpha - \beta} M^\mu_{\beta}  \\
		&\leq C \| \psi \|_{L^\infty, M^\mu, w^\mu \jb{\, \cdot \,}^k} \| \rho \|_{L^\infty, M^\mu, w^\mu \jb{\, \cdot \,}^k} M^\lambda_\alpha ,
	\end{align*}
so that $\psi \rho \in \S^{M^\lambda}_{w^\lambda \jb{\, \cdot \,}^k, \infty}$. Since $\M$ satisfies $[\condL]$ and $[\condI]$, we obtain that  $\S^{[\M]}_{[\W_k], \infty}$ is closed under pointwise multiplication.
 By Lemma \ref{lemma-nt}  there is $\varphi \in \mathcal{S}^{[\M]}_{[\W_{k}], \infty} \setminus \{0\}$. Then, $|\varphi|^2 = \varphi \overline{\varphi} \in \mathcal{S}^{[\M]}_{[\W_{k}], \infty} \setminus \{0\}$. Then, $\varphi_0 =  |\varphi|^2 / \| |\varphi|^2 \|_{L^1} \in \mathcal{S}^{[\M]}_{[\W_{k}], \infty}$ and $\int_{\R^n} \varphi_0(x) dx = 1$. Set 
	$$
		\psi(x) = \int_{[0, 1]^n} \varphi_0(x-t) dt
	$$
so that $\sum_{j \in \Z^{n}} T_{j} \psi \equiv 1$. 
As  $\W$ satisfies $[\condwM]$, we  have that $\psi \in \mathcal{S}^{[\M]}_{[\W_{k}], \infty}$.
\end{proof}
	
\begin{proof}[Proof of Theorem \ref{t:InclusionCharFixedWeightFuncSystem}] 
Fix $k \in \N$ such that $2k \geq n+1$.
By Lemma \ref{l:SpecificWindow2} there is  $\psi \in \mathcal{S}^{[\M]}_{[\W_{2k + n+1}],\infty}$ such that $\sum_{j \in \Z^{n}} T_{j} \psi \equiv 1$. 
Then for any $\Z^n$-periodic periodic function $f$ in $E$ we have
	\[ \Pi(L_\psi(f)) = \Pi(\psi f) = \sum_{j \in \Z^n} T_j(\psi f) = f  \sum_{j \in \Z^n} T_j \psi = f . \]
Now, Lemma \ref{l:ContInclusionTildeSp} implies that $E^{[\M]}_{[\W_{2k}]} \subseteq E^{[\NN]}_{[\V_{2k}]}$.  In view of the latter inclusion,  Lemmas \ref{hulp1} and \ref{hulp2} yield that $f = \Pi(L_{\psi} (f)) \in E^{[\NN]}_{\operatorname{per}}$ for all $f \in E^{[\M]}_{\operatorname{per}}$. 
Thus, $E^{[\M]}_{\operatorname{per}} \subseteq E^{[\NN]}_{\operatorname{per}}$. The result now follows from Proposition \ref{t:InclusionPeriodicFunc}.
\end{proof}

\subsection*{Acknowledgement}
We thank the anonymous referee for helpful comments to improve the paper, in particular for suggesting the counterexample in Remark \ref{r:Counterexample}.

\end{document}